\documentclass[11pt,pdftex]{amsart}
\usepackage{amssymb}
\usepackage{amscd}
\usepackage{comment}
\usepackage{tikz}
	\usetikzlibrary {arrows.meta}
	\usetikzlibrary {bending}
	\usetikzlibrary{calc}
\usepackage{mathrsfs}
\usepackage{braket}
\usepackage{graphicx}
\newtheorem{dfn}[equation]{Definition}
\newtheorem{prop}[equation]{Proposition}
\newtheorem{thm}[equation]{Theorem}
\newtheorem{cor}[equation]{Corollary}
\newtheorem{lem}[equation]{Lemma}
\newtheorem{rmk}[equation]{Remark}
\newtheorem{asmp}[equation]{Assumption}

\numberwithin{equation}{section}
\newcommand{\Z}{\mathbb{Z}}
\newcommand{\Q}{\mathbb{Q}}

\newcommand{\C}{\mathbb{C}}
\newcommand{\Zp}{\mathbb{Z}_p}

\newcommand{\Gal}{{\mathrm{Gal}}}
\newcommand{\Hom}[1]{\mathrm{Hom}_{#1}}

\newcommand{\Fitt}{{\mathrm{Fitt}}}
\newcommand{\Cl}[1]{\mathrm{Cl}_{#1}}

\newcommand{\Pic}[1]{\mathrm{Pic}({#1})}
\newcommand{\Picz}[1]{\mathrm{Pic}^0({#1})}
\newcommand{\ord}[1]{\mathrm{ord}_{#1}}
\newcommand{\coker}{{\mathrm{coker}}}
\newcommand{\rank}{{\mathrm{rank}}}
\newcommand{\pe}{{\mathfrak{p}}}
\newcommand{\ROI}[1]{{\mathcal{O}_{#1}}}
\newcommand{\UG}[1]{{\mathcal{O}_{#1}^\times}}
\newcommand{\CG}[1]{{\widehat{#1}}}
\newcommand{\hks}{h_{K,S}}
\newcommand{\Rks}{R_{K,S}}

\newcommand{\BF}[1]{\mathrm{BF}({#1})}
\newcommand{\BFtors}[1]{{\mathrm{BF}({#1})_{\mathrm{tors}}}}
\newcommand{\BFp}[1]{\mathrm{BF}_p({#1})}

\newcommand{\BFopsi}[1]{\mathrm{BF}_{O_\psi}({#1})}

\title[BOWEN--FRANKS GROUPS AND MINUS CLASS GROUPS]{BOWEN--FRANKS GROUPS AND MINUS CLASS GROUPS OF IMAGINARY ABELIAN FIELDS}
\author{Takuya Murakami}
\address{Osaka Metropolitan University, Department of Mathematics,
3-3-138 Sugimoto, Sumiyoshi-ku, Osaka, 558-8585, Japan}
\address{Osaka Prefectural Yamada Senior High School,
3-28-1 Yamadahigashi, Suita-shi, Osaka, 565-0821, Japan}
\email{takuyamurakami.jp@gmail.com}
\date{}
\begin{document}
\begin{abstract}
Lei, M\"uller, and Valli$\mathrm{\grave{e}}$res constructed a directed graph for which the torsion part of the corresponding Bowen--Franks group is related to the minus part of the class group of a cyclotomic number field with prime conductor. 
We extend this result to a general imaginary abelian field.
\end{abstract}
\maketitle
\tableofcontents
\section{Introduction}\label{_Intro}
\subsection{Main results}\label{__Intro1}
Let $p$ be an odd rational prime.
Let $\zeta_p$ be a primitive $p$-th root of unity.
In \cite[Theorem A]{LMV},
Lei, M\"uller, and Valli$\mathrm{\grave{e}}$res
constructed a directed graph $Y$ related to 
the Stickelberger element $\theta_p$
for the Galois group $\Gal(\Q(\zeta_p)/\Q)$
(see \S\ref{SE} for the precise definition)
and proved the following formula:
\begin{equation}\label{MTH_LMV}
\#\BFtors{Y}=p^{\frac{p-1}{2}}\cdot h_{\Q(\zeta_p)}^-,
\end{equation}
where $\BF{Y}$ and $h_{\Q(\zeta_p)}^-$
denote the Bowen--Franks group of $Y$ and
the minus class number of the cyclotomic field $\Q(\zeta_p)$
with conductor $p$.

In this paper,
we extend the result \eqref{MTH_LMV} to
general imaginary abelian fields.
Let $m$ be an integer such that $m>2$ and
assume that $4\mid m$ if $2\mid m$.
Let $S$ be the finite set of places of $\Q$
consisting of the primes dividing $m$
and the infinite place $\infty$.
\begin{thm}\label{intro_MTH}
Let $K$ be an imaginary abelian field with conductor $m$.
Let $t$ be the number of finite places
of the maximal real subfield of $K$ which lie above the primes dividing $m$
and are inert in $K$.
We assume the ``no trivial zeros'' condition (Assumption~\ref{NTZ})
for $\Gal(K/\Q)$,
that is,
we assume that $L_S(0,\chi)\ne0$
for all odd characters $\chi$ of $\Gal(K/\Q)$,
where $L_S(s,\chi)$ denotes the $S$-truncated Dirichlet $L$-function.
Then there exists a digraph $Y$ such that
$$
\#\BFtors{Y}=m^{\frac{[K:\Q]}{2}+1}\hspace{1pt}2^{t+1}
\frac{[\Q(\zeta_m):K]}{w_K\cdot\mathcal{Q}_K}
h_K^-,
$$
where 
$h_K^-$ and $w_K$ denote 
the minus class number of $K$ and
the number of roots of unity in $K$
respectively, and
$\mathcal{Q}_K:=[\UG{K}:\mu_K\UG{K^+}]\in\{1,2\}$ is the Hasse unit index.
The notation used here is defined in \S\ref{__Intro3}.
\end{thm}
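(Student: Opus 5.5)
Following the strategy of \cite{LMV}, we construct $Y$ from a Stickelberger-type element and reduce $\#\BFtors{Y}$ to a product of $L$-values. Let $G=\Gal(K/\Q)$, $n=[K:\Q]$, and let $\theta_K=\sum_{\sigma\in G} a_\sigma\sigma^{-1}\in\Q[G]$ be the ($S$-truncated) Stickelberger element of $K$, with denominators dividing $m$. The integral element $m\theta_K$ is a combination $\sum_\sigma c_\sigma \sigma$ with $c_\sigma\in\Z$. We would pass to nonnegative coefficients, adding a multiple of the norm element if needed, and take $Y$ to be the Cayley-type digraph on vertex set $G$ with $c_\sigma$ edges from $\tau$ to $\sigma\tau$. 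Its adjacency matrix $A$ is then the matrix of multiplication by this element on $\Z[G]$. Since $\BF{Y}=\coker(I-A)$, we choose the element so that $I-A$ is multiplication by an element $\Theta\in\Z[G]$ built from $m\theta_K$, e.g. $\Theta = m\theta_K - (\text{suitable multiple of }N_G)$, or with $1-\text{complex conjugation}$ inserted. Then $\BF{Y}\cong \Z[G]/\Theta\Z[G]$.

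The torsion of $\Z[G]/\Theta\Z[G]$ is computed by diagonalizing over characters. After tensoring with $\Q$, the cokernel has rank equal to the number of characters $\chi$ with $\chi(\Theta)=0$. Using $\chi(\theta_K)=L_S(0,\chi^{-1})$, this vanishes for all nontrivial even $\chi$; it is nonzero for odd $\chi$ by Assumption~\ref{NTZ}. The trivial character is handled by the choice of the norm-element correction. Hence $\#\BFtors{Y}$ equals the index of $\Theta\Z[G]$ in a saturated lattice. This index equals $\prod_{\chi\ \mathrm{odd}} m\,L_S(0,\chi)$, times the trivial-character value, times a lattice-index correction. That correction comes from comparing $\Z[G]$ with the minus part $\Z[G]^-$ and the plus part, and it produces powers of $2$.

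Next we apply the analytic class number formula for the minus part, $h_K^- = \mathcal{Q}_K w_K \prod_{\chi\ \mathrm{odd}} \bigl(-\tfrac12 L(0,\chi)\bigr)$. We convert $L_S(0,\chi)$ to the primitive value $L(0,\chi_f)$ via the Euler factors $\prod_{\ell\mid m}(1-\chi(\ell))$. The product of these factors over odd $\chi$ is controlled by the decomposition behavior of the primes $\ell\mid m$ in $K/K^+$, and the inert primes contribute the factor $2^t$. The factor $m^{n/2}$ comes from the odd characters, while $m\cdot 2\,[\Q(\zeta_m):K]$ comes from the trivial-character/norm correction; for instance, $\theta$ of the trivial character relates to $\varphi(m)/(2)$-type quantities restricted from $\Q(\zeta_m)$.

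The main obstacle is the precise bookkeeping of the index of $\Theta\Z[G]$ in its saturation. This requires computing the discriminant-type factor $[\Z[G]:\Z[G]^+\oplus\Z[G]^-]$, which gives powers of $2$ that must combine with the $2^{n/2}$ from the class number formula, and handling the trivial character to produce exactly $m\,2\,[\Q(\zeta_m):K]$. I would first carry this out by computing $\det$ of multiplication by $\Theta$ restricted to the $(1-J)$-eigenpart and a complement, and verify the result against the prime conductor case \eqref{MTH_LMV}. There $t=0$, $[\Q(\zeta_p):K]=1$, $w_K=2p$ and $\mathcal{Q}_K=1$, which gives $p^{(p-1)/2+1}\cdot 2/(2p)\cdot h^- = p^{(p-1)/2}h^-$, so the constants match.
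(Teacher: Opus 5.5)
You have the right overall strategy, but the part that fixes the constant in the statement is left open, and the plan you sketch for it cannot produce the right number.

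\textbf{Where the gap is.} Write $n=\#G$ and let $J$ be complex conjugation.

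First, $Y$ is never pinned down, and the answer depends on the choice. Since $\chi_0(\theta_K)=0$, the trivial-character eigenvalue $\chi_0(\Theta)$ comes entirely from your norm correction, and $\#\BFtors{Y}$ is proportional to it. Also, $m\theta_K$ need not be integral when $m$ is odd (e.g.\ $K=\Q(\zeta_p)$). The choice that yields the theorem is the paper's: $A=1+\sum_a a\,\pi(\sigma_a)^{-1}$. Then $\Theta=1-A=-\sum_a a\,\pi(\sigma_a)^{-1}=m\theta_K-\tfrac m2[\Q(\zeta_m):K]N_G$. This $A$ has positive entries, so $Y$ is strongly connected, and $\chi_0(\Theta)=-\tfrac m2\varphi(m)$.

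Second, the lattice correction is neither $[\Z[G]:\Z[G]^+\oplus\Z[G]^-]=2^{n/2}$ nor any power of $2$; using that index would be off by a factor $n/4$. The index that enters is $[\Z[G]:L_1\oplus L_2]$, where
\begin{itemize}
\item $L_1=\ker\Theta=\{x: Jx=x,\ N_Gx=0\}$, and
\item $L_2=\Theta\Q[G]\cap\Z[G]=\{\sum a_\sigma\sigma : a_\sigma+a_{J\sigma}\ \text{constant}\}$.
\end{itemize}
So the trivial character sits on the image side. The projection $\pi^+:\Z[G]\to\Z[G^+]$ has kernel $(1-J)\Z[G]\subset L_2$. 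It maps $L_1\oplus L_2$ onto $2\mathfrak{a}\oplus\Z N_{G^+}$, where $\mathfrak{a}$ is the augmentation ideal of $\Z[G^+]$. Hence the index is $2^{n/2-1}\cdot\tfrac n2$, which is exactly the paper's $|m(Y)|$. The factor $\tfrac n2$ is what turns $\tfrac m2\varphi(m)$ into $m[\Q(\zeta_m):K]$. One gets $\#\BFtors{Y}=\frac{(m/2)\varphi(m)\, m^{n/2}\prod_{\chi\ \mathrm{odd}}|L_S(0,\chi)|}{2^{n/2-1}\cdot n/2}$.

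\textbf{Comparison with the paper, once this is filled in.} Your route is then a legitimate alternative.

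On the graph side, the paper never computes this index itself. It quotes $g_Y^*(1)=m(Y)\#\BFtors{Y}$ (Theorem~\ref{LMV2.3}) and divides by the same identity for $Y^+$, computing $\BF{Y^+}$ and $m(Y^+)$ separately to strip off the even characters. Your direct index computation is equivalent and self-contained.

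On the arithmetic side your route is genuinely shorter. You replace Theorem~\ref{mSCNF2}, Proposition~\ref{RKS}, Tate's formula for $R_{K,S}$ and Washington's regulator index with the primitive relative class number formula plus Euler factors. You must, however, prove the $2^t$ claim; it goes as follows, with $D_\ell$ the decomposition group of $\ell\mid m$.
\begin{itemize}
\item If $J\in I_\ell$, every odd $\chi$ is ramified at $\ell$ and contributes $1$.
\item If $J\notin I_\ell$, Assumption~\ref{NTZ} forces $J\in D_\ell$; otherwise an odd character trivial on $D_\ell$ would have $\chi^{\mathrm{prim}}(\ell)=1$.
\item In that case the image of $J$ is the element of order $2$ in the cyclic group $D_\ell/I_\ell$, of even order $f$. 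The odd characters unramified at $\ell$ take each value $\zeta$ with $\zeta^{f/2}=-1$ at $\mathrm{Frob}_\ell$ exactly $g$ times. Here $g$ is the number of primes of $K^+$ above $\ell$, all inert in $K$.
\item So these characters contribute $\bigl(\prod_{\zeta^{f/2}=-1}(1-\zeta)\bigr)^g=2^g$.
\end{itemize}
This gives $\prod_{\chi\ \mathrm{odd}}L_S(0,\chi)=\pm 2^{n/2+t}h_K^-/(w_K\mathcal{Q}_K)$. Note that the classical formula is $h_K^-=\mathcal{Q}_Kw_K\prod(\tfrac12 L(0,\chi))$, without your extra minus sign. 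Combining this with $\varphi(m)/n=[\Q(\zeta_m):K]$ yields exactly the stated formula.
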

We note that Assumption \ref{NTZ}
is automatically satisfied, for example,
when $K$ is an imaginary quadratic field or
a cyclotomic field $\Q(\zeta_{p^n})$
for a rational prime $p$ and
a positive integer $n$ with $n>1$ if $p=2$
(see Remark \ref{rmk_NTZ_holds}).

Let $p$ be an odd rational prime.
If $K=\Q(\zeta_p)$, then
we have $m=p$, $w_K=2p$, $\mathcal{Q}_K=1$, and $t=0$.
Hence,
the right hand side of Theorem \ref{intro_MTH}
is equal to $p^{\frac{p-1}{2}}h_K^-$,
which is nothing but the result (\ref{MTH_LMV})
of Lei, M$\mathrm{\ddot{u}}$ller, and Valli$\mathrm{\grave{e}}$res.

To explain the difference between \eqref{MTH_LMV} and Theorem \ref{intro_MTH},
we recall some classical results.
Let $K$ be an imaginary abelian field.
The formula for
the minus class number $h_K^-$ of $K$
is given by
$$
h_K^-=\mathcal{Q}_Kw_K\prod_{\chi:\mathrm{odd}}\left(-\frac{1}{2}B_{1,\chi}\right),
$$
where 
the product is taken over all $primitive$ characters induced by
odd characters of $\Gal(K/\Q)$
and
$B_{n,\chi}$ are
the generalized Bernoulli numbers.
It is known that
$$
-B_{1,\chi}=L(0,\chi),
$$
where $L(s,\chi)$ is the Dirichlet $L$-function
for a character $\chi$ of $\Gal(K/\Q)$.
Via these classical formulas,
the minus class number
is related to the values of Dirichlet $L$-functions at $s=0$.
If $K$ is the cyclotomic field $\Q(\zeta_m)$ with
conductor $m$, then
for any $primitive$ character $\chi$ with conductor $m$,
it is known that
$$
\chi^{-1}(\theta_m)=L(0,\chi),
$$
where
$\theta_m$ denotes the Stickelberger element for $\Gal(\Q(\zeta_m)/\Q)$.

In \cite{LMV},
for the case where $m=p$,
Lei, M$\mathrm{\ddot{u}}$ller, and Valli$\mathrm{\grave{e}}$res
proved \eqref{MTH_LMV}
by constructing a directed graph $Y$ related to $\theta_p$.
However, even if $K$ is a general cyclotomic field $\Q(\zeta_m)$,
the characters of $\Gal(\Q(\zeta_m)/\Q)$ are not necessarily primitive,
and thus the usual relative class number formula described above
cannot be applied directly.
Therefore, we assume the ``no trivial zeros'' condition
(see Assumption \ref{NTZ})
and apply the minus $S$-class number formula
(see Theorem \ref{mSCNF2}).
The number $t$ appearing in Theorem \ref{intro_MTH}
arises when connecting the minus $S$-class number with
the ordinary minus class number.
When $K$ is an imaginary abelian field with conductor $m$,
we can regard the character group of $\Gal(K/\Q)$ as
a subgroup of the character group of $\Gal(\Q(\zeta_m)/\Q)$.
Thus, we can apply essentially the same argument
as in the cyclotomic case.
The only difference from the cyclotomic case
is the factor $[\Q(\zeta_m):K]$,
which arises when projecting the norm element over $\Q(\zeta_m)$ to $K$.

Fix an odd rational prime $p$.
Let $\psi$ be an odd character of $G:=\Gal(K/\Q)$.
The second main result of this paper gives
a formula relating the orders of
the $\psi$-components of the $p$-adic completion of the Bowen--Franks group
$\BF{Y}\otimes_{\Z[G]}\Z_p[\psi(G)]$
and the ideal class group $\Cl{K}\otimes_{\Z[G]}\Z_p[\psi(G)]$.
This result extends \cite[Theorem B]{LMV}.
\begin{thm}\label{intro_MTH2}
Let $K/\Q$ be an imaginary abelian extension with conductor $m$
and Galois group $G$.
Let $\psi$ be an odd character of $G$.
We assume that for every rational prime $\ell\mid m$, the value at $\ell$
of the primitive character associated with $\psi$
is not equal to one
(Assumption~\ref{NTZpsi}).
Fix an odd rational prime $p$.
If $\Q(\zeta_p)\subset K$,
assume that
the restriction of $\psi$ to
the unique maximal subgroup of $G$ of order prime to $p$
is not the Teichm\"uller character
(Assumption~\ref{psineomega}).
We denote by $\varpi$ a uniformizer of
the discrete valuation ring $\Z_p[\psi(G)]$.
Let $Y$ be the digraph constructed
in the proof of Theorem \ref{intro_MTH}.
Then, we have
$$\#(\BF{Y}\otimes_{\Z[G]}\Z_p[\psi(G)])
=|m\varpi^{c_\psi}|_p^{-\rank_{\Z_p}(\Z_p[\psi(G)])}
\#(\Cl{K}\otimes_{\Z[G]}\Z_p[\psi(G)]),$$
where $c_\psi$ is the constant defined in Theorem \ref{__MTH2}
and $|\cdot|_p$ denotes the usual $p$-adic absolute value.
In particular, if $p\nmid\# G$, then we have
$$\#(\BF{Y}\otimes_{\Z[G]}\Z_p[\psi(G)])
=p^{\delta}
\#(\Cl{K}\otimes_{\Z[G]}\Z_p[\psi(G)]),$$
where $\delta=0$ if $p\nmid m$, and $\delta=\rank_{\Z_p}(\Z_p[\psi(G)])$ if $p\mid m$.
\end{thm}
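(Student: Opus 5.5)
We will prove Theorem \ref{intro_MTH2} by reducing both sides to $p$-adic valuations of a single Stickelberger-type element, with the digraph $Y$ designed so that its adjacency matrix is the matrix of multiplication by a lift of that element. Concretely, the digraph $Y$ of Theorem \ref{intro_MTH} has vertex set (a copy of) $G$, and its Bowen--Franks group $\BF{Y}=\coker(I-A_Y)$ is a cyclic $\Z[G]$-module $\Z[G]/(\Theta)$. Here $\Theta\in\Z[G]$ is an integral element built from $m\theta_m$ projected to $K$ together with norm-element corrections; this is where the factors $m$ and $[\Q(\zeta_m):K]$ arise. Tensoring with $\Z_p[\psi(G)]$ via $\psi$ then gives
\[
\BF{Y}\otimes_{\Z[G]}\Z_p[\psi(G)]\cong \Z_p[\psi(G)]/(\psi(\Theta)),
\]
so its order is $|\psi(\Theta)|_p^{-\rank_{\Z_p}(\Z_p[\psi(G)])}$.

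The first step is to compute $\psi(\Theta)$. Since $\psi$ is odd, the norm-element corrections vanish under $\psi$. What remains is
\[
\psi(\Theta)=m\cdot\varpi^{c_\psi}\cdot L_S(0,\psi^{-1})
\]
up to a unit, where $c_\psi$ records the remaining correction from the adjustment in the construction; this is exactly the quantity of Theorem \ref{__MTH2}. Moreover,
\[
L_S(0,\psi^{-1})=L(0,\psi_0^{-1})\prod_{\ell\mid m}\bigl(1-\psi_0^{-1}(\ell)\bigr),
\]
where $\psi_0$ is the associated primitive character. Assumption \ref{NTZpsi} guarantees that this product is nonzero, so the module is finite.

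The second step is the arithmetic input. We apply the $\psi$-part of the Iwasawa main conjecture for $\Q$ (Mazur--Wiles, Solomon for $p\mid\#G$) in its finite-level form. This gives
\[
\#(\Cl{K}\otimes_{\Z[G]}\Z_p[\psi(G)])=\bigl|L(0,\psi_0^{-1})\bigr|_p^{-\rank}
\]
up to the Euler factors. Assumption \ref{psineomega} excludes the Teichm\"uller case, in which the trivial zero at $\omega$ and the $\mu_p$ contribution would break the equality. We then absorb the Euler factors $\prod_{\ell\mid m}(1-\psi_0^{-1}(\ell))$ into the definition of $c_\psi$, or show that they are matched on the class group side by passing to $S$-class groups. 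Combining this with the first step yields the main formula.

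For the case $p\nmid\#G$, the ring $\Z_p[\psi(G)]$ is unramified over $\Z_p$, so $\varpi=p$ and $c_\psi=0$. In this setting the characters are tame and the Euler factors are units, since $1-\psi_0(\ell)$ with $\psi_0(\ell)$ a root of unity of order prime to $p$ is a unit unless it vanishes. Then $|m|_p^{-\rank}=p^{\delta}$, which gives the second formula.

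The main obstacle is the second step: making the finite-level equality precise as an equality of orders of $\psi$-components. This is delicate when $p\mid\#G$, where $\psi$-components are not direct summands. There we would invoke the Fitting ideal results (the Kurihara--Greither equivariant main conjecture for $\Cl{K}^-$) rather than cardinalities character by character.
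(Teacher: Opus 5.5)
\textbf{There is a genuine gap.} Your graph-side reduction is right in outline: $X$ is a bouquet, so $\BF{Y}\cong\Z[G]/(\gamma_{Y/X}(1))$ and hence $\BFopsi{Y}\cong O_\psi/\psi(\gamma_{Y/X}(1))$. But your formula for $\psi(\Theta)$ is wrong. By Proposition~\ref{gamma_L}, $\psi(\gamma_{Y/X}(1))=m\,L_S(0,\psi^{-1})$ \emph{exactly}, with no factor $\varpi^{c_\psi}$. So the graph side has valuation $v_\psi(mB_{1,\psi^{-1}})+b_\psi$, where $b_\psi=\sum_{\ell\mid m,\,\ell\nmid f_\psi}v_\psi(1-\psi^{-1}(\ell))$ collects \emph{all} the Euler factors. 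The constant $c_\psi$ is not produced by the construction of $Y$. It is the discrepancy between these Euler factors and the ones that survive on the class-group side. With your Step~1 as written, any correct class-group input would give the wrong answer whenever $c_\psi\neq0$.

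That discrepancy is the whole content of the theorem, and your Step~2 does not compute it. Writing ``up to the Euler factors'' and then either absorbing them into $c_\psi$ (which is fixed by the statement, not at your disposal) or passing to $S$-class groups in an unspecified way is not an argument. What is needed is the exact Fitting ideal. By Kurihara--Miura, base-changed to the DVR $O_\psi$, one has $\Fitt_{O_\psi}(A_K^\psi)=\psi(\varTheta_K\otimes\Z_p)$. One must then evaluate $\psi$ on the generators $\nu_{K/K_d}(\theta_{K_d})$:
\begin{itemize}
\item the value is $0$ unless $f_\psi\mid d$;
\item otherwise it equals $-B_{1,\psi^{-1}}\prod_{\ell\mid m,\,\ell\nmid d}\#I_\ell\prod_{\ell\mid d,\,\ell\nmid f_\psi}(1-\psi^{-1}(\ell))$.
\end{itemize}
Hence each Euler factor is replaced by the ideal $(\#I_\ell,1-\psi^{-1}(\ell))=\varpi^{a_\ell}O_\psi$ with $a_\ell=\min\{v_\psi(\#I_\ell),b_\ell\}$. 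This step requires reducing to $G=\prod_\ell I_\ell$ via the field $K'$, and using Assumption~\ref{psineomega} to replace $\varTheta_K=\varTheta_K'\cap R_K$ by $\varTheta_K'$ after applying $\psi$.

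You then still need $b_\ell-a_\ell=b_\ell$ exactly when $\ell\in T_\psi$, and $0$ otherwise. This comes from the following facts:
\begin{itemize}
\item $b_\ell=0$ unless $\psi(\ell)$ has order $p^r$ with $r\ge1$;
\item in that case $b_\ell=v_\psi(p)/(p^{r-1}(p-1))<v_\psi(p)$, which is $\le v_\psi(\#I_\ell)$ when $p\mid\#I_\ell$.
\end{itemize}
None of this follows from a Mazur--Wiles statement for $L(0,\psi_0^{-1})$ ``up to Euler factors''.

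In the tame case $p\nmid\#G$, your outline is essentially adequate, with one gap. The identity $|m|_p^{-1}=p$ needs $p^2\nmid m$. This holds because $p\nmid\#I_p$ makes $p$ tamely ramified in $K$, so its conductor exponent is $1$.
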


Let $\ell$ be an odd rational prime.
We consider the setting of \cite[Theorem~B]{LMV},
where $K=\Q(\mu_\ell)$ and $p\nmid\# G=\ell-1$.
Note that the roles of $p$ and $\ell$ are interchanged.
If $\ell\ne p$, then $\delta=0$.
On the other hand,
if $\ell=p$, then $\Z_p[\psi(G)]=\Z_p$, so $\delta=1$.
Hence,
the latter assertion of Theorem \ref{intro_MTH2}
recovers \cite[Theorem B]{LMV},
except for the case where $\ell=p$ and $\psi$ is the Teichm\"uller character.
\subsection{Background}\label{__Intro2}
In undirected graph theory,
the number of spanning trees $\kappa(X)$ of a finite connected multigraph $X$
is studied as an analogue of the class number
in number theory.
By Kirchhoff's matrix tree theorem,
this is known to be
the cardinality of the Picard group $\Picz{X}$
of degree zero on $X$.
In directed graph theory,
the group corresponding to the Picard group $\Pic{X}$
is the Bowen--Franks group $\BF{X}$.
The zeta function for directed graphs
that corresponds to the Ihara zeta function
for multigraphs
is the Artin--Mazur zeta function.
The Artin--Mazur zeta function was first introduced in \cite{AM65}.
Later, Bowen and Lanford showed in \cite{BL70} that 
the inverse of the Artin--Mazur zeta function is
the reverse characteristic polynomial of
the associated matrix.
For that reason, the zeta function is sometimes called
the Bowen--Lanford zeta function.
Moreover, Kotani and Sunada
reinterpreted this result
in the context of graph theory
in \cite{KS00}.
These results allow us to define 
the inverse of the Bowen--Lanford zeta function
of a directed graph $X$
as the reverse characteristic polynomial of
the adjacency matrix of $X$.
Following \cite{LMV},
we denote it by $g_X(u)$.
The equivariant Ihara zeta function was introduced in \cite{GV25}
for multigraphs.
Following this, the inverse of the equivariant Artin--Mazur zeta function
$\gamma_{Y/X}(u)$
was defined in \cite{LMV} for a Galois cover $Y/X$ of directed graphs.
The following formula, proved in \cite[Theorem 3.13]{LMV},
is an analogue of the decomposition of
the Dedekind zeta function into
Dirichlet $L$-functions in number theory:
$$
g_Y(u)=\prod_{\chi}\chi(\gamma_{Y/X}(u)),
$$
where the product is taken over
all characters of the group of deck transformations $\Gal(Y/X)$
Thanks to this decomposition,
we can consider a value corresponding  to the minus class number
in graph theory as well.

In the Iwasawa theory of graphs,
the graph analogue of Iwasawa's class number formula
in number theory (ICNF, for short)
has been studied
in the setting of $\Z_p^d$-towers of graphs.
In undirected graph theory,
the main object of study is the $p$-adic valuation
$\ord{p}(\kappa(X_n))$
of the cardinality $\kappa(X_n)$ of $\Picz{X_n}$,
where $X_n$ is the $n$-th layer of the $\Z_p^d$-tower.
Two types of coverings,
unramified and branched (or ramified),
have been defined,
and the graph analogue of the ICNF
has been proved in both settings
(see \cite[\S1.1]{K26} for details).
Furthermore, in \cite{LV23},
for a rational prime $\ell$ distinct from $p$,
the ICNF for $\ord{\ell}(\kappa(X_n))$ has been proven
for $\Z_p$-towers of unramified coverings.
On the other hand,
in directed graph theory,
only unramified coverings have been defined so far.
In \cite{LM26},
the ICNF for $\ord{\ell}(\#\BFtors{X_n})$
with $\ell\ne p$
was proven 
up to $O(p^{(d-1)n})$
for $\Z_p^d$-towers of (unramified) coverings.
Although the present paper does not focus on the ICNF itself,
it can be viewed as part of the Iwasawa theory of directed graphs.
\subsection{Notation}\label{__Intro3}
For the reader's convenience,
we review some general notation
that will be used throughout this paper.
For a number field $K$,
the ring of integers of $K$ is denoted by $\ROI{K}$.
For a finite set $S$ of places of $\Q$,
the set of places of $K$ lying above $S$ is denoted by $S_K$,
and the ring of $S$-integers of $K$ is denoted by $\ROI{K,S}$
instead of $\ROI{K,S_K}$.
We denote by $\UG{K}$ and $\UG{K,S}$
the unit group and the $S$-unit group of $K$
respectively.
Similarly,
we denote by $\Cl{K}$ and $\Cl{K,S}$
the class group and the $S$-class group of $K$,
whose cardinalities are denoted by $h_K$ and $h_{K,S}$
respectively.
The sets of finite and infinite places in $S_K$
are denoted by $S_{K,\mathrm{fin}}$ and $S_{K,\infty}$
respectively.
We denote by $\mu_K$ the group of roots of unity in $K$,
whose cardinality is denoted by $w_K$.
For an imaginary abelian field $K$,
the maximal real subfield is denoted by $K^+$.
For a positive integer $m$,
we denote by $\zeta_m$ a primitive $m$-th root of unity.
The cyclotomic field with conductor $m$ is denoted by $\Q(\zeta_m)$,
and its Galois group over $\Q$ is denoted by $G_m$.
For an integer $i$ relatively prime to $m$,
we denote by $\sigma_i$ the element of $G_m$
satisfying $\sigma_i(\zeta_m)=\zeta_m^i$.
For a finite abelian group $G$,
the norm element of $G$
is denoted by $N_G:=\sum_{\sigma\in G}\sigma \in\Z[G]$.
The character group of $G$ is denoted by
$\CG{G}:=\Hom{\Z}(G,\C^\times)$.
We denote by $\overline{\Q}$ the algebraic closure of $\Q$ in $\C$.
For each rational prime $p$,
we fix an embedding $\overline{\Q}\hookrightarrow\overline{\Q}_p$.
When working $p$-adically, we regard each $\psi\in\CG{G}$
as a $\overline{\Q}_p$-valued character via this embedding.
\subsection{Acknowledgments}\label{__Intro4}
The author would like to deeply thank
Professor Takamichi Sano
for his many valuable suggestions and helpful advice.
He would also like to thank
Kensho Okano
for many stimulating discussions.
\section{Class number formulas and Stickelberger elements}\label{_CNFandSE}
In this section, we first review the minus $S$-class number formula involving the special values of $S$-truncated Dirichlet $L$-functions at zero. 
Next, we discuss the ``no trivial zeros'' condition.
Under this condition, the minus $S$-class number formula can be described
in terms of the $S$-unit index and the values of $S$-truncated $L$-functions at zero.
Finally, we review some basic properties of Stickelberger elements.
\subsection{The minus $S$-class number formula}\label{__mSCNF}
Let $K$ be an imaginary abelian extension of $\Q$ with Galois group $G$.
The ring of integers of $K$ is denoted by $\ROI{K}$.
For a finite set $S$ of places of $\Q$ containing the infinite place $\infty$, the set of places of $K$ lying above $S$ is denoted by $S_K$, and the ring of $S$-integers of $K$ is denoted by $\ROI{K,S}$ instead of $\ROI{K,S_K}$.
The $S$-truncated Dedekind zeta function of $K$ is defined by 
$$
\zeta_{K,S}(s):=\prod_{\pe\not\in S_K}\frac{1}{1-N(\pe)^{-s}},
$$
where the product is taken over all finite places of $K$ not contained in $S_K$, and $N(\pe)$ denotes the cardinality of the residue field $\ROI{K}/\pe$ of $\pe$.
It is known that
\begin{equation}\label{r}
r:=\ord{s=0}\zeta_{K,S}(s)=\rank_\Z\UG{K,S}=\# S_K-1.
\end{equation}
The $S$-class number formula is stated as follows:
\begin{equation}\label{SCNF}
\zeta_{K,S}^*(0):=\lim_{s\to0}\frac{1}{s^r}\zeta_{K,S}(s)
=-\frac{\hks\cdot\Rks}{w_K},
\end{equation}
where 
$\hks, \Rks$, and $w_K$ denote the $S$-class number of $K$ (i.e.\ the class number of $\ROI{K,S}$),
the $S$-regulator of $K$, and 
the number of roots of unity in $K$
respectively.
For a character $\chi\in\CG{G}:=\Hom{\Z}(G,\C^\times)$, 
the $S$-truncated Dirichlet $L$-function is defined by
$$
L_S(s,\chi):=\prod_{p\not\in S}\frac{1}{1-\chi(p)p^{-s}}
\hspace{30pt}(\mathrm{Re}(s)>1)
$$
where the product is taken over all rational primes not contained in $S$.
The function $L_S(s,\chi)$ continues meromorphically to the whole complex plane, and its leading coefficient at zero is denoted by $L^*_S(0,\chi)$.
There exists a decomposition
\begin{equation}\label{decomp}
\zeta_{K,S}(s) = \prod_{\chi\in\CG{G}}L_S(s,\chi)
\end{equation}
and hence we have 
$$
\zeta_{K,S}^*(0) = \prod_{\chi\in\CG{G}}L_S^*(0,\chi).
$$

Let $K^+$ be the maximal real subfield of $K$.
The minus $S$-class number is defined by
$$
\hks^-:=\frac{\hks}{h_{K^+,S}}.
$$
(One can show that this is an integer.)
By (\ref{SCNF}) and (\ref{decomp}),
we have
\begin{equation}\label{mSCNF}
\prod_{\substack{\chi\in\CG{G}\\\chi:\mathrm{odd}}}L_S^*(0,\chi)
=\frac{\zeta_{K,S}^*(0)}{\zeta_{K^+,S}^*(0)}
=\frac{2\Rks}{w_K\cdot R_{K^+,S}}\hks^-.
\end{equation}
\subsection{The ``no trivial zeros'' condition}\label{__NTZ}
Let $K$ be an imaginary abelian extension of $\Q$ with Galois group $G$.
Let $S$ be a finite set of places of $\Q$ containing the infinite place $\infty$.
Throughout the remainder of $\S$\ref{_CNFandSE}
and in $\S$\ref{__MTH},
we impose the following assumption.
In $\S$\ref{__character components},
we replace it with the corresponding condition
for a fixed odd character $\psi$
(see Assumption \ref{NTZpsi}).
\begin{asmp}[``no trivial zeros'']\label{NTZ}
For all odd characters $\chi\in\CG{G}$, we have $L_S(0,\chi)\ne0$.
\end{asmp}

\begin{rmk}\label{rmk_NTZ}
Note that Assumption~\ref{NTZ} is equivalent to the following condition:
for any odd character $\chi\in\CG{G}$, 
we have $\chi^{\mathrm{prim}}(p)\ne 1$ for any rational prime $p\in S$,
where $\chi^{\mathrm{prim}}$ denotes the primitive character associated with $\chi$.
Under this assumption, we have
$$
\sum_{\chi\in\CG{G}}\ord{s=0}L_S(s,\chi)
=\sum_{\substack{\chi\in\CG{G}\\\chi:\mathrm{even}}}\ord{s=0}L_S(s,\chi)
$$
since $\ord{s=0}L_S(s,\chi)=0$ for any odd character $\chi\in\CG{G}$.
Therefore, combining this with \eqref{r} and \eqref{decomp}, we have $\# S_K=\# S_{K^+}$.
Hence,
$\# S_{K,\mathrm{fin}}=\# S_{K^+,\mathrm{fin}}$,
and thus no finite place in $S_{K^+}$ splits in $K$.
\end{rmk}
\begin{rmk}\label{rmk_NTZ_holds}
We present cases where Assumption \ref{NTZ} is satisfied.
\begin{enumerate}
\item
The case where $K=\Q(\zeta_{p^n})$ and $S=\{p,\infty\}$
for a rational prime $p$ and
a positive integer $n$
with $n>1$ if $p=2$.
Indeed, since every odd character $\chi\in \CG{G}$ has conductor $p^m$
for some $m\ (0< m\leq n)$,
we have $\chi^{\mathrm{prim}}(p)=0\ne1$. 
\item
The case where $K$ is an imaginary quadratic field
with conductor $m$
and $S=\{p\mid p\text{ divides }m\}\cup\{\infty\}$.
To see that Assumption \ref{NTZ} is satisfied, 
it suffices to show that
the unique odd character $\chi\in\CG{G}$
has conductor $m$.
Let $\pi:\Gal(\Q(\zeta_m)/\Q)\to G$ be the natural projection.
Since $\ker(\chi)=\{1_G\}$, we have
$ker(\chi\circ\pi)=\ker(\pi)=\Gal(\Q(\zeta_m)/K)$.
On the other hand,
if $\chi$ has conductor $f<m$,
then we must have $\Gal(\Q(\zeta_m)/\Q(\zeta_f))\subset\ker(\chi\circ\pi)$.
This implies $K\subset\Q(\zeta_f)$.
However, this contradicts the fact that $K$ has conductor $m$.
\item
The case where
$K=\Q(\zeta_{p^aq^b})$ and $S=\{p,q,\infty\}$
for distinct odd primes $p$ and $q$
satisfying
$$-1\in\braket{p}\subset(\Z/q\Z)^\times
\quad\text{and}\quad
-1\in\braket{q}\subset(\Z/p\Z)^\times,$$
and positive integers $a$ and $b$.
The proof is given in Lemma \ref{eg_LEM}.
\end{enumerate}
\end{rmk}
Next, we calculate the value $\Rks / R_{K^+,S}$.
Let $\mu_K$ be the group of roots of unity in $K$ and 
let 
$$\mathcal{Q}_{K,S}:=[\UG{K,S}:\mu_K\UG{K^+,S}]$$
be the $S$-unit index.
\begin{prop}\label{RKS}
Under Assumption \ref{NTZ}, we have
$$
\frac{\Rks}{R_{K^+,S}}=\frac{2^r}{\mathcal{Q}_{K,S}},
$$
where $r$ is defined in (\ref{r}).
\end{prop}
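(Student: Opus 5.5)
We compare the two $S$-regulators through the bijection $S_K\to S_{K^+}$ of Remark~\ref{rmk_NTZ}. Recall that $\Rks$ is the absolute value of the determinant of the $r\times r$ matrix $(\log\|\varepsilon_j\|_{v_i})$. Here $\varepsilon_1,\dots,\varepsilon_r$ is a basis of $\UG{K,S}$ modulo torsion, $v_1,\dots,v_{r+1}$ are the places in $S_K$ with one of them omitted, and $\|\cdot\|_v$ is the normalized absolute value; at a complex place $v$ this is the square of the usual absolute value. By \eqref{r} both unit groups have rank $r$, since $\#S_K=\#S_{K^+}$ under Assumption~\ref{NTZ}. So we can first compute the regulator of the finite-index subgroup $\UG{K^+,S}\subset\UG{K,S}$ (modulo torsion) using the normalizations of $K$. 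Then we correct by the index.

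\textbf{Step 1: local comparison.} Let $w\in S_K$ lie over $v\in S_{K^+}$. Remark~\ref{rmk_NTZ} shows that each $v$ has exactly one place $w$ above it, so $w$ is either complex over a real $v$ or finite and non-split ($v$ inert or ramified). In every case $[K_w:K^+_v]=2$, so for $x\in K^+$ we get $\|x\|_w=\|x\|_v^2$. The regulator matrix of a basis of $\UG{K^+,S}$ computed in $K$ is therefore $2$ times its regulator matrix in $K^+$. Taking the determinant of an $r\times r$ matrix, the regulator of $\UG{K^+,S}$ computed in $K$ equals $2^r R_{K^+,S}$.

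\textbf{Step 2: index correction.} For a subgroup of finite index in $\UG{K,S}/\mu_K$, the regulator scales by the index. Hence
$$2^rR_{K^+,S}=[\UG{K,S}/\mu_K:\mu_K\UG{K^+,S}/\mu_K]\cdot\Rks=\mathcal{Q}_{K,S}\Rks.$$
Here $\mu_K\UG{K^+,S}$ has the same image modulo torsion as $\UG{K^+,S}$, because $\UG{K^+,S}\cap\mu_K=\{\pm1\}$ is torsion. Rearranging gives $\Rks/R_{K^+,S}=2^r/\mathcal{Q}_{K,S}$.

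\textbf{Main obstacle.} The delicate point is Step 1: we must verify that $[K_w:K^+_v]=2$ uniformly at every place in $S$. This is exactly where the ``no trivial zeros'' hypothesis enters, since a split place would contribute a factor of $1$ instead of $2$ and the ranks would differ. It is also worth checking that the infinite places behave consistently with the normalization: $K^+$ is totally real and $K$ is totally imaginary, so every infinite place gives a complex-over-real extension. Finally, finiteness of $\mathcal{Q}_{K,S}$ follows from the equality of ranks, so Step 2 is routine.
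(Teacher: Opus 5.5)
Your proposal is correct and follows the paper's proof essentially step for step. You compute the regulator of the free part of $\UG{K^+,S}$ using the normalized absolute values of $K$, which gives $2^r R_{K^+,S}$ because no place of $S_{K^+}$ splits in $K$. You then compare this with $\Rks$ through the index $\mathcal{Q}_{K,S}$, which is the Washington Lemma~4.15 argument the paper invokes; your remark that $\mu_K\UG{K^+,S}$ and $\UG{K^+,S}$ have the same image modulo torsion makes explicit a point the paper leaves implicit.
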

\begin{proof}
By Remark \ref{rmk_NTZ},
we have $r=\# S_K-1 = \#S_{K^+}-1$.
Let $\varepsilon=\{\varepsilon_1,\ldots,\varepsilon_r\}$ be a basis of
the free part of $\UG{K^+,S}$.
We choose $r$ elements of $S_{K^+}$ and denote them by $v_1,\ldots,v_r$.
Then the $S$-regulator of $K^+$ is defined to be
$$
R_{K^+,S}:=R_{K^+,S}(\varepsilon)
:=\mid \det(\log|\varepsilon_j|_{v_i})_{1\leq i,j\leq r}\mid.
$$
Let $\eta=\{\eta_1,\ldots,\eta_r\}$ be a basis of the free part of $\UG{K,S}$.
For $i=1,\ldots,r$, we choose a place $w_i$ of $K$ lying above $v_i$.
Then the $S$-regulator of $K$ is defined to be
$$
R_{K,S}:=R_{K,S}(\eta)
:=\mid \det(\log|\eta_j|_{w_i})_{1\leq i,j\leq r}\mid.
$$
We consider
$R_{K,S}(\varepsilon):=\mid \det(\log|\varepsilon_j|_{w_i})_{1\leq i,j\leq r}\mid$.
For $i=1,\ldots,r$, the places $v_i$ (finite or infinite) do not split in $K$.
Therefore, we have 
$\log|\varepsilon_j|_{w_i}=2\log|\varepsilon_j|_{v_i}$
for each $i$ and $j$.
Hence, we have
$$
R_{K,S}(\varepsilon)=2^r\cdot R_{K^+,S}.
$$
Moreover, by the same argument as in the proof of \cite[Lemma 4.15]{W},
we have 
$R_{K,S}(\varepsilon)=\mathcal{Q}_{K,S}\cdot R_{K,S}$.
This completes the proof.
\end{proof}

Therefore,
the formula (\ref{mSCNF}) can be written as follows.
\begin{thm}[``the minus $S$-class number formula'']\label{mSCNF2}
Under Assumption~\ref{NTZ}, we have
$$
\prod_{\substack{\chi\in\CG{G}\\\chi:\mathrm{odd}}}L_S(0,\chi)
=\frac{2^{r+1}}{w_K\cdot\mathcal{Q}_{K,S}}\hks^-.
$$
\end{thm}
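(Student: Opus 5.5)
The plan is to combine the identity \eqref{mSCNF} with Proposition~\ref{RKS}, after checking that the product of leading coefficients over odd characters is a product of genuine values at $s=0$. Everything needed has already been established under Assumption~\ref{NTZ}, so the proof is essentially a substitution.

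First, by Assumption~\ref{NTZ}, for every odd $\chi\in\CG{G}$ the function $L_S(s,\chi)$ does not vanish at $s=0$. It is also holomorphic there, since only the trivial character has a pole and it is even. Hence $\ord{s=0}L_S(s,\chi)=0$ and $L_S^*(0,\chi)=L_S(0,\chi)$ for each odd $\chi$. The left-hand side of \eqref{mSCNF} therefore equals $\prod_{\chi\ \mathrm{odd}}L_S(0,\chi)$.

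Second, recall how \eqref{mSCNF} arises. Dividing \eqref{SCNF} for $K$ by the corresponding formula for $K^+$, the even characters of $G$ are exactly the characters of $\Gal(K^+/\Q)$, so by \eqref{decomp} their contributions cancel. On the right-hand side one uses $w_{K^+}=2$ and $\hks/h_{K^+,S}=\hks^-$, which gives
$$\prod_{\chi\ \mathrm{odd}}L_S(0,\chi)=\frac{2\Rks}{w_K R_{K^+,S}}\,\hks^-.$$
The regulator ratio is compared with the same set of places on both sides. This is legitimate because Remark~\ref{rmk_NTZ} gives $\#S_K=\#S_{K^+}$, so both unit ranks equal $r$.

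Third, substitute Proposition~\ref{RKS}, namely $\Rks/R_{K^+,S}=2^r/\mathcal{Q}_{K,S}$, into this identity. The result is
$$\prod_{\chi\ \mathrm{odd}}L_S(0,\chi)=\frac{2^{r+1}}{w_K\,\mathcal{Q}_{K,S}}\,\hks^-,$$
which is the claim.

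There is no real obstacle, since all the work is contained in Proposition~\ref{RKS}. The one point to check carefully is that the orders of vanishing of $\zeta_{K,S}$ and $\zeta_{K^+,S}$ at $s=0$ agree, so that the quotient of leading terms is the honest product of values. This again follows from Remark~\ref{rmk_NTZ}.
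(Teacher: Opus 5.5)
Your proposal is correct and is essentially the paper's proof: Assumption~\ref{NTZ} gives $L_S^*(0,\chi)=L_S(0,\chi)$ for every odd $\chi$, and then substituting Proposition~\ref{RKS} into \eqref{mSCNF} yields the formula. One small imprecision: the pole of the trivial-character $L$-function is at $s=1$, not $s=0$, so every $L_S(s,\chi)$ is holomorphic at $s=0$ whatever its parity.
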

\begin{proof}
By Remark~\ref{rmk_NTZ},
under Assumption~\ref{NTZ},
we have $L_S(0,\chi)=L_S^*(0,\chi)$
for an odd character $\chi\in\CG{G}$.
Hence, by (\ref{mSCNF}) and Proposition~\ref{RKS},
we obtain the theorem.
\end{proof}
\subsection{Stickelberger elements}\label{SE}
Let $m$ be an integer such that $m>2$ and assume that $4\mid m$ if $2\mid m$.
Let $\zeta_m$ be a primitive $m$-th root of unity.
In this subsection, we consider the cyclotomic field $\Q(\zeta_m)/\Q$ with Galois group $G_m$.
Let $S$ be the finite set of places of $\Q$ consisting of the primes dividing $m$ and $\infty$.
The Stickelberger element is defined to be
$$
\theta_m=\sum_{\substack{1\leq a\leq m\\(a,m)=1}}\left(\frac{1}{2}-\frac{a}{m}\right)\sigma_a^{-1} \in\Q[G_m]
$$
where $\sigma_a$ is the element in $G_m$ such that $\sigma_a(\zeta_m)=\zeta_m^a$.
For a non-trivial character $\chi\in \CG{G_m}$, we have
\begin{equation}\label{chi_theta1}
\chi^{-1}(\theta_m)
=-\frac{1}{m}\sum_{\substack{1\leq a\leq m\\(a,m)=1}}a\chi(\sigma_a).
\end{equation}
\begin{prop}\label{chi_theta2}
For a character $\chi\in \CG{G_m}$,
we have
$$
\chi^{-1}(\theta_m)=L_S(0,\chi).
$$
Hence, for all odd characters $\chi\in\CG{G_m}$,
we have
$\chi^{-1}(\theta_m)\ne0$
under Assumption~\ref{NTZ}.
\end{prop}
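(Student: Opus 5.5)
We prove the identity $\chi^{-1}(\theta_m)=L_S(0,\chi)$ by comparing both sides with the value at $s=0$ of the Hurwitz-type partial zeta functions attached to residues modulo $m$. The key point is that $L_S(s,\chi)$ is the $L$-function of $\chi$ viewed as a character modulo $m$, not necessarily primitive. Since $S$ consists of the primes dividing $m$ together with $\infty$, we have for $\mathrm{Re}(s)>1$
$$
L_S(s,\chi)=\sum_{\substack{n\geq1\\(n,m)=1}}\frac{\chi(\sigma_n)}{n^s}
=\sum_{\substack{1\leq a\leq m\\(a,m)=1}}\chi(\sigma_a)\sum_{k\geq0}\frac{1}{(a+km)^s}
=m^{-s}\sum_{\substack{1\leq a\leq m\\(a,m)=1}}\chi(\sigma_a)\,\zeta\!\left(s,\tfrac{a}{m}\right).
$$
Here $\zeta(s,x)$ is the Hurwitz zeta function. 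The Euler factors for $p\in S$ are removed exactly by the condition $(n,m)=1$, and $\chi(p)$ for $p\nmid m$ means $\chi(\sigma_p)$ by the Artin map.

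Next we use the classical fact that $\zeta(s,x)$ continues holomorphically to $s=0$ (apart from the pole at $s=1$) with $\zeta(0,x)=\frac12-x$. Then $m^{-s}\zeta(s,a/m)$ is holomorphic at $s=0$ with value $\frac12-\frac am$, and so
$$
L_S(0,\chi)=\sum_{\substack{1\leq a\leq m\\(a,m)=1}}\left(\frac12-\frac am\right)\chi(\sigma_a).
$$
On the other hand, applying $\chi^{-1}$ to $\theta_m$ sends $\sigma_a^{-1}$ to $\chi^{-1}(\sigma_a^{-1})=\chi(\sigma_a)$. This gives exactly the same sum, so the identity holds for every $\chi$, including the trivial character.

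For a nontrivial $\chi$ the $\frac12$-term vanishes by orthogonality, which recovers \eqref{chi_theta1}. For the trivial character $L_S(s,\chi)=\zeta(s)\prod_{p\mid m}(1-p^{-s})$ vanishes at $0$. Correspondingly $\sum(\frac12-\frac am)=0$, by the symmetry $a\leftrightarrow m-a$, since $m>2$. So that case is consistent.

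The second assertion is then immediate. For $\chi$ odd, Assumption~\ref{NTZ} for $G_m$ gives $L_S(0,\chi)\neq0$, hence $\chi^{-1}(\theta_m)\neq0$.

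The only step needing care, and the expected obstacle, is justifying $\zeta(0,x)=\frac12-x$ and the holomorphy of the Hurwitz continuation at $s=0$. For this we cite the standard result, e.g.\ via $\zeta(1-n,x)=-B_n(x)/n$ with $B_1(x)=x-\frac12$, as in \cite{W}. Everything else is bookkeeping with the identification $\chi(p)=\chi(\sigma_p)$.
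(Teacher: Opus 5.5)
Your proof is correct and is essentially the paper's argument. The paper only cites Washington's Theorem 4.1 and Proposition 4.2, noting that they hold for imprimitive characters; these are the Hurwitz-zeta computation of $L(1-n,\chi)=-B_{n,\chi}/n$, with $B_{n,\chi}$ computed modulo any multiple of the conductor, and your calculation modulo $m$ (including the check for the trivial character and the identification $\chi(p)=\chi(\sigma_p)$) writes out exactly that argument.
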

\begin{proof}
Theorem 4.1 and Proposition 4.2 of \cite{W} hold
without the assumption that $\chi$ is primitive.
The assertion follows from these results.
\end{proof}
\section{Bowen--Franks groups}\label{_BFG}
In this section, we first review some basic properties of
directed graphs (digraphs, for short), following \cite{LMV}.
The key result here is Theorem \ref{LMV2.3} regarding the cardinality of the torsion part of the Bowen--Franks group.
It is also important that
a decomposition similar to (\ref{decomp}) exists
on the graph side.
Finally, we prove two theorems
concerning the Bowen--Franks groups.
These are the main theorems of the present paper
(Theorems \ref{MTH} and \ref{__MTH2}).
\subsection{Digraphs}\label{__DG}
A digraph $X=(V_X, E_X)$ consists of a set of vertices $V_X$ and a set of directed edges $E_X$
with an incidence map
$$
\mathrm{inc}:E_X\to V_X\times V_X
\quad;\quad
e\mapsto(o(e),t(e)),
$$
where
$o(e)$ and $t(e)$
are called
the origin and the terminal vertex
of $e$ respectively.
For each vertex $v\in V_X$,
we set $E_{X,v}:=\{e\in E_X \mid o(e)=v\}$.
A digraph $X$ is said to be finite 
if both $V_X$ and $E_X$ are finite sets, and
it is said to be strongly connected 
if for any two vertices $u$ and $v$, there exists a directed path from $u$ to $v$.
In this paper,
we focus only on strongly connected finite digraphs.

Given a strongly connected finite digraph $X$, let $\Z V_X$ be the free abelian group on $V_X$.
The adjacency operator $\mathcal{A}_X:\Z V_X\rightarrow\Z V_X$ is defined by
$$
\mathcal{A}_X(v)=\sum_{e\in E_{X,v}}t(e)
$$
for $v\in V_X$.
The identity operator on $\Z V_X$ is denoted by $\mathcal{I}_X$.
\begin{dfn}
Let $X$ be a strongly connected finite digraph.
We define the Bowen--Franks operator on $\Z V_X$ by
$\mathcal{BF}_X:=\mathcal{I}_X-\mathcal{A}_X$.
Then the Bowen--Franks group of $X$ is defined by
$$
\mathrm{BF}(X):=\mathrm{coker}(\mathcal{BF}_X).
$$
\end{dfn}
\begin{dfn}
Let $X$ be a strongly connected finite digraph.
We set
$$
g_X(u):=\det(\mathcal{I}_X-\mathcal{A}_X u ) \ \in\Z[u],
$$
which is the inverse of the Bowen--Lanford zeta function (see \cite[\S 2.2]{LMV}).
\end{dfn} 
We set
$$
r_X:=\ord{u=1}(g_X(u)),\quad
s_X:=\rank_\Z(\BF{X}),\quad\text{and}\quad 
\delta_X:=r_X-s_X.
$$
Note that $r_X$ and $s_X$ are the algebraic and geometric multiplicities of $\mathcal{A}_X$ at the eigenvalue one respectively.
Hence, we have $\delta_X\geqq0$.
Let $\mathcal{A}_{X,\C}$ be the base change of $\mathcal{A}_X$ to $\C$.
If $\lambda_1,\ldots,\lambda_n$ are the eigenvalues of $\mathcal{A}_{X,\C}$, then we have
\begin{equation}\label{g}
g_X(u)=\prod_{i=1}^n(1-\lambda_i u)
\end{equation}
by the definition of $g_X(u)$.
From this equation, we have
\begin{equation}\label{g^*}
g_X^*(1):=\lim_{u\to1}\frac{g_X(u)}{(u-1)^{r_X}}=(-1)^{r_X}
\prod_{\lambda_i\ne1}(1-\lambda_i),
\end{equation}
where the product in the last term is taken over all eigenvalues that are not equal to one.
Lei, M$\mathrm{\ddot{u}}$ller, and Valli$\mathrm{\grave{e}}$res
proved the following theorem relating the special value $g_X^*(1)$ to the cardinality of the torsion part of the Bowen--Franks group, assuming $\delta_X=0$.

\begin{thm}[{\cite[Theorem 2.6]{LMV}}]\label{LMV2.3}
Let $X$ be a strongly connected finite digraph.
If $\delta_X=0$, then there exists a nonzero integer $m(X)$ such that
$$
g_X^*(1)=m(X)\cdot\#\BFtors{X}.
$$
\end{thm}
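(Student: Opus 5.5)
We plan to prove this via the Smith normal form of the integer matrix $B:=I_n-A_X$ attached to $\mathcal{BF}_X$ in the vertex basis. We use the fact that the map $u\mapsto g_X(u)$, expanded at $u=1$, can be compared with the characteristic polynomial of $B$. First note that $g_X(u)=\det(I-Au)$. Put $u=1+h$, so that $I-Au=B-hA$. Equivalently, $\det(xI-B)=\prod_i(x-(1-\lambda_i))$. The hypothesis $\delta_X=0$ says that the eigenvalue $0$ of $B$ has equal algebraic and geometric multiplicity $r:=r_X=s_X$, so $B$ is semisimple at $0$. Then $\Q^n=\ker B\oplus \mathrm{im} B$ as $\Q[B]$-modules, and $B$ restricted to $W:=\mathrm{im}(B)\otimes\Q$ is invertible with determinant $\prod_{\lambda_i\ne1}(1-\lambda_i)=(-1)^r g_X^*(1)$ by \eqref{g^*}.

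The key step is to compare this determinant with $\#\BFtors{X}$. Let $L:=\Z^n$, $K:=\ker(B)\cap L$ (saturated, rank $r$), and $M:=B(L)$. Then $\BF{X}=L/M$, and its torsion is $M^{\mathrm{sat}}/M$, where $M^{\mathrm{sat}}:=(M\otimes\Q)\cap L=W\cap L$. Hence $\#\BFtors{X}=[W\cap L:M]$.

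Now consider $B$ restricted to the lattice $W\cap L$, mapping into $W\cap L$. We have
$$\det(B|_W)=\pm[W\cap L:B(W\cap L)]=\pm[W\cap L:M]\,[M:B(W\cap L)].$$
Since $B(L)=M$ and $B$ kills $K$, we get $M=B(L)$ and $B(W\cap L)\supseteq B(K\oplus(W\cap L))$. Moreover $[L:K\oplus(W\cap L)]$ is finite, and the index $[M:B(W\cap L)]$ divides it, being the image of $L/(K\oplus (W\cap L))$ under $B$. Therefore we may set
$$m(X):=\pm(-1)^r[M:B(W\cap L)],$$
which is a nonzero integer, and obtain $g_X^*(1)=m(X)\cdot\#\BFtors{X}$.

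The main obstacle is checking that the identity $\det(B|_W)=\pm[W\cap L:B(W\cap L)]$ is legitimate. This holds because $W\cap L$ is a full lattice in $W$ and $B|_W$ is an injective endomorphism of it, and for such maps the absolute determinant equals the index. We also need the sign and the $(-1)^{r}$ bookkeeping from \eqref{g^*}. Strong connectivity is not really needed beyond finiteness, and we would remark on this.
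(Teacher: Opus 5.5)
Your proof is correct. The paper does not prove this statement itself (it is quoted from Lei--M\"uller--Valli\`eres), so there is no in-paper argument to compare against, and yours stands as a complete, self-contained proof.

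The steps all check out:
\begin{itemize}
\item $\delta_X=0$ gives $\Q^n=\ker B\oplus W$.
\item The characteristic polynomial of $B$ is $x^{r}$ times that of $B|_W$, so $\det(B|_W)=\prod_{\lambda_i\neq1}(1-\lambda_i)=(-1)^{r}g_X^*(1)$.
\item $\BFtors{X}=(W\cap L)/M$.
\item $|\det(B|_W)|=[W\cap L:B(W\cap L)]$, because $W\cap L$ is a full lattice in $W$.
\end{itemize}
Your remark that strong connectivity plays no role is also right: the argument works for any square integer matrix with $\delta=0$.

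Two cosmetic points. First, the opening substitution $u=1+h$ is never used. Second, your divisibility claim is actually an equality. The kernel of the surjection $L\to M/B(W\cap L)$, $x\mapsto Bx$, is exactly $K+(W\cap L)$: if $Bx=Bw$ with $w\in W\cap L$, then $x-w\in K$. Hence
\[
|m(X)|=[L:K\oplus(W\cap L)],
\]
the index in $\Z V_X$ of the sum of the two saturated pieces of the Fitting decomposition. You also do not need the divisibility to get $m(X)\neq0$, since finiteness of $[M:B(W\cap L)]$ already follows from $\det(B|_W)\neq0$. And $m(X)=g_X^*(1)/\#\BFtors{X}$ is forced by the identity, so this index description holds whichever proof one uses.

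The other natural route is through determinantal divisors.
\begin{itemize}
\item The coefficient of $x^{r}$ in $\det(xI-B)$ is $(-1)^{n-r}e_{n-r}(B)$, where $e_{n-r}(B)$ is the sum of the principal $(n-r)\times(n-r)$ minors.
\item Comparing with $x^{r}\prod_{\mu\neq0}(x-\mu)$ gives $e_{n-r}(B)=(-1)^{r}g_X^*(1)\neq0$.
\item Since $\delta_X=0$ means $\rank B=n-r$, the Smith normal form gives $\#\BFtors{X}=d_1\cdots d_{n-r}$. This is the gcd of all $(n-r)\times(n-r)$ minors, so it divides $e_{n-r}(B)$.
\end{itemize}
That proof is shorter and yields $m(X)=(-1)^{r}e_{n-r}(B)/(d_1\cdots d_{n-r})$ directly from the matrix entries, but it gives $m(X)$ no structural meaning.

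Your lattice argument does give it one: $|m(X)|$ is a regulator-type index, and that is what makes the values in \eqref{m} transparent. For $Y^+$, the operator $\mathcal{BF}_{Y^+}$ is multiplication on $\Z[G^+]$ by a nonzero integer multiple of $N_{G^+}$ (Lemma \ref{norm}). So $\ker(B)\cap L$ is the augmentation ideal, $W\cap L=\Z N_{G^+}$, and the index is $\#G^+=\#G/2$. Under Assumption \ref{NTZ}, a similar count over the orbits of complex conjugation on $G$ gives $2^{\#G/2-1}\cdot\#G/2$ for $Y$.
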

Next, we review the equivariant digraph theory.
Let $X$ be a strongly connected finite digraph, and let $G$ be a finite abelian group.
Recall that a voltage assignment is simply a function $\alpha:E_X\to G$.
We denote by $Y:=X(G,\alpha)$ the derived graph whose sets of vertices and edges are defined by 
$V_Y=V_X\times G$ and $E_Y=E_X\times G$ respectively 
(see \cite[\S 4.1]{LMV}).
We assume that $Y$ is also strongly connected.
Throughout this paper, we only consider derived graphs
that are strongly connected.
Then, by \cite[Proposition 4.2]{LMV},
the cover $Y/X$ is a Galois cover
(see \cite[Definition A.2]{LMV}).
Note that we can identify $\Z V_Y$ with $\Z[G]V_X$.
Since we can view the operator $\mathcal{I}_Y-\mathcal{A}_Yu$ as an operator
on a free $\Z[G][u]$-module of finite rank $\# V_X$,
we can define the following determinant.
\begin{dfn}
Let $Y/X$ be a Galois cover of strongly connected finite digraphs.
We set
$$\textstyle
\gamma_{Y/X}(u):=\det_{\Z[G][u]}(\mathcal{I}_Y-\mathcal{A}_Yu)
\in\Z[G][u],
$$
which is the inverse of the equivariant Artin--Mazur zeta function (see \cite[\S3.3]{LMV}).
\end{dfn}
A character $\chi\in\CG{G}$ induces a $\C$-ring homomorphism
$\C[G]\to \C$, which in turn induces a $\C[u]$-ring homomorphism
$\C[G][u]\to\C[u]$.
We denote this map by the same symbol $\chi$.
We set
$$
g_{Y/X}(u,\chi):=\chi(\gamma_{Y/X}(u)),
$$
and its leading coefficient at $u=1$ is denoted by $g_{Y/X}^*(1,\chi)$.
By \cite[Theorem 3.13]{LMV}, we have a decomposition
\begin{equation}\label{g_decomp}
g_Y(u)=\prod_{\chi\in\CG{G}}g_{Y/X}(u,\chi)
\end{equation}
and hence we have
$$
g_Y^*(1)
=\prod_{\chi\in\CG{G}}g_{Y/X}^*(1,\chi).
$$
\subsection{Bowen--Franks groups and minus class numbers}\label{__MTH}
Let $K/\Q$ be an imaginary abelian extension with
conductor $m$ and Galois group $G$.
Let $S$ be the finite set consisting of
the primes dividing $m$ and $\infty$.
The goal of this subsection is to prove the following theorem.
\begin{thm}\label{MTH}
Suppose that Assumption \ref{NTZ} holds.
Let $t$ be the number of finite places in $S_{K^+}$
which are inert in $K$.
Then there exists a digraph $Y$ such that
$$
\#\BFtors{Y}=m^{\frac{[K:\Q]}{2}+1}\hspace{1pt}2^{t+1}
\frac{[\Q(\zeta_m):K]}{w_K\cdot\mathcal{Q}_K}
h_K^-,
$$
where 
$h_K^-$ and $w_K$ denote 
the minus class number of $K$ and
the number of roots of unity in $K$
respectively,
and 
$\mathcal{Q}_K:=[\UG{K}:\mu_K\UG{K^+}]\in\{1,2\}$ is
the Hasse unit index.
\end{thm}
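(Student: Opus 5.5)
We would follow the strategy of \cite{LMV}: take $X$ to be a bouquet with a single vertex, and encode the Stickelberger element in the voltage assignment.

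\textbf{The graph.} Let $\pi:G_m\to G$ be the natural projection and set
$$
\beta:=\pi\Bigl(\sum_{\substack{1\leq a\leq m\\(a,m)=1}}a\,\sigma_a\Bigr)\in\Z_{\ge0}[G].
$$
Take $X$ to be one vertex with $1+\sum_g\beta_g$ loops, where $\beta_g$ is the coefficient of $g$ in $\beta$. Assign the voltage $1_G$ to one loop and the voltage $g$ to $\beta_g$ of the others. The derived graph $Y=X(G,\alpha)$ then has adjacency operator equal to multiplication by $\alpha=1+\beta$ on $\Z V_Y=\Z[G]$. Every $g\in G$ occurs in $\beta$ with positive coefficient, so $Y$ is strongly connected. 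Moreover $\mathcal{BF}_Y$ is multiplication by $-\beta$, so
$$
\BF{Y}=\Z[G]/\beta\Z[G],\qquad g_{Y/X}(u,\chi)=1-(1+\chi(\beta))u .
$$

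\textbf{Character values of $\beta$.} Inflate each $\chi\in\CG{G}$ to $G_m$.
\begin{itemize}
\item If $\chi$ is odd, (\ref{chi_theta1}) and Proposition~\ref{chi_theta2} give $\chi(\beta)=-m\,L_S(0,\chi^{-1})$, which is nonzero by Assumption~\ref{NTZ}.
\item If $\chi$ is even and nontrivial, pair $a$ with $m-a$. Then $\chi(\beta)=\frac m2\sum_a\chi(\sigma_a)=0$.
\item If $\chi$ is trivial, $\chi(\beta)=m\varphi(m)/2$.
\end{itemize}
Since $\Q[G]$ is semisimple and $\beta$ acts diagonally, the geometric and algebraic multiplicities at eigenvalue one agree, so $\delta_Y=0$. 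We do not use the unknown integer $m(Y)$ of Theorem~\ref{LMV2.3}. Instead we compute the torsion directly:
$$
\BFtors{Y}=\bigl(\Z[G]\cap\beta\Q[G]\bigr)/\beta\Z[G].
$$
The image $\beta\Q[G]$ is the sum of the odd part and the trivial-character line. Its integral lattice is $\Lambda:=\Z[G]\cap\bigl(e^-\Q[G]\oplus\Q N_G\bigr)$, where $e^-=(1-j)/2$ and $j$ is complex conjugation.

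\textbf{Computing the torsion order.} We compute $\#(\Lambda/\beta\Z[G])$ as $[\Lambda:\beta\Lambda]\cdot[\beta\Lambda:\beta\Z[G]]$, or equivalently by comparing both lattices with $(1-j)\Z[G]\oplus\Z N_G$. The determinant of $\beta$ on $\Lambda$ is
$$
\prod_{\chi\ \mathrm{odd}}m\,L_S(0,\chi)\cdot\frac{m\varphi(m)}{2}.
$$
The comparison indices should contribute explicit powers of $2$ together with a factor $|G|$, coming from $N_G$ and $(1-j)$ in $\Z[G]$. Combined with $\varphi(m)=[\Q(\zeta_m):K]\,|G|$, this should produce the factor $m^{[K:\Q]/2+1}[\Q(\zeta_m):K]$. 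We then apply Theorem~\ref{mSCNF2}, which gives $\prod_{\chi\ \mathrm{odd}}L_S(0,\chi)=2^{r+1}h_{K,S}^-/(w_K\mathcal{Q}_{K,S})$.

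\textbf{From $S$-invariants to ordinary invariants.} It remains to replace $h^-_{K,S}$ and $\mathcal{Q}_{K,S}$ by $h^-_K$ and $\mathcal{Q}_K$. By Remark~\ref{rmk_NTZ} no place of $S_{K^+}$ splits in $K$, so each finite place of $S_{K^+}$ is inert or ramified. Inert primes are principal in $\Cl{K}$ exactly when they come from $K^+$. Ramified primes square to extended primes and so create $S$-units $\pi$ with $\pi^2\in\UG{K^+,S}$ up to roots of unity. Hence the exact sequences relating $\Cl{K}\to\Cl{K,S}$ and $\UG{K}\subset\UG{K,S}$ should give
$$
h^-_{K,S}/\mathcal{Q}_{K,S}=2^{\,t-r}\,h^-_K/\mathcal{Q}_K .
$$
This turns $2^{r+1}$ into $2^{t+1}$.

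\textbf{Main obstacle.} The hard part is the exact power-of-$2$ bookkeeping. The lattice indices for $\Lambda$ versus $(1-j)\Z[G]\oplus\Z N_G$ and the $S$-unit/$S$-class comparison must combine to exactly $2^{t+1}/(w_K\mathcal{Q}_K)$. We would verify this first on $K=\Q(\zeta_p)$, where the answer must reduce to (\ref{MTH_LMV}), and only then carry out the general comparison.
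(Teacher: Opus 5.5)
Your digraph is the paper's (replacing $\sigma_a^{-1}$ by $\sigma_a$ in the voltages gives an isomorphic derived graph). Computing $\BFtors{Y}=\Lambda/\beta\Z[G]$ directly is a sound, self-contained alternative to the paper's route through Theorem~\ref{LMV2.3}, the auxiliary graph $Y^+$, and the constants $m(Y)$, $m(Y^+)$. The gap is the power of $2$, which is exactly what the theorem asserts, and there the proposal is wrong, not just unfinished: the relation $h^-_{K,S}/\mathcal{Q}_{K,S}=2^{t-r}h^-_K/\mathcal{Q}_K$ is false. The correct relation is
\[
\frac{h^-_{K,S}}{\mathcal{Q}_{K,S}}=2^{\,t-\#S_{K,\mathrm{fin}}}\,\frac{h^-_K}{\mathcal{Q}_K}.
\]
Since $r=\frac{\#G}{2}+\#S_{K,\mathrm{fin}}-1$, the two differ by $2^{\#G/2-1}$. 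Already for $K=\Q(\zeta_p)$ with $p>3$ you have $h^-_{K,S}=h^-_K$, $\mathcal{Q}_K=1$ and $\mathcal{Q}_{K,S}=2$. Indeed, the $S$-unit $1-\zeta_p$ has odd valuation at the prime above $p$, while every element of $\mu_K\UG{K^+,S}$ has even valuation there. So the factor is $2^{-1}$, not $2^{-(p-1)/2}$. The exact-sequence argument you have in mind does give the right formula when carried out. Set $P_K:=\ker(\Z^{S_{K,\mathrm{fin}}}\to\Cl{K})\cong\UG{K,S}/\UG{K}$, define $P_{K^+}$ likewise, and let $\iota$ be the extension of divisors from $K^+$ to $K$. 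Then $\mathcal{Q}_{K,S}/\mathcal{Q}_K=[P_K:\iota P_{K^+}]$ and $[\Z^{S_{K,\mathrm{fin}}}:P_K]=h_K/h_{K,S}$ (and likewise for $K^+$). Moreover $[\Z^{S_{K,\mathrm{fin}}}:\iota\Z^{S_{K^+,\mathrm{fin}}}]=2^{\#S_{K,\mathrm{fin}}-t}$, because only the ramified places contribute (there is no splitting by Remark~\ref{rmk_NTZ}). The archimedean places, which account for $\#G/2$ of the rank $r$, play no role in this comparison. The paper reaches the same identity via Tate's formula for $\Rks$ and the classical ratio $R_K/R_{K^+}$.

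The missing factor $2^{1-\#G/2}$ therefore has to come from the lattice index you left as ``should contribute explicit powers of $2$''. Your claim that the $S$-comparison by itself ``turns $2^{r+1}$ into $2^{t+1}$'' would require that index to contribute no power of $2$, which fails once $\#G>2$. Concretely, put $e=e^-+N_G/\#G$. Then $\beta\Z[G]=\beta e\Z[G]$ and $\beta$ is injective on $e\Q[G]$, so $\#\BFtors{Y}=[\Lambda:\beta\Lambda]/[e\Z[G]:\Lambda]$. Also $[e\Z[G]:\Lambda]=[\Z[G]:\Lambda\oplus\Lambda']$ with $\Lambda'=\Z[G]\cap(1-e)\Q[G]$. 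Use the unimodular coordinates $u_i=x_{g_i}+x_{jg_i}$, $v_i=x_{g_i}$, with $g_i$ running over representatives of $G/\langle j\rangle$. In these coordinates $\Lambda\oplus\Lambda'$ becomes $(\Z\mathbf{1}+2I_0)\times\Z^{\#G/2}$, where $\mathbf{1}$ is the all-ones vector and $I_0$ the lattice of sum-zero vectors. Hence $[e\Z[G]:\Lambda]=\frac{\#G}{2}\,2^{\#G/2-1}$, which is the paper's $|m(Y)|$. Using $\varphi(m)=[\Q(\zeta_m):K]\,\#G$ and Theorem~\ref{mSCNF2}, this gives
\[
\begin{aligned}
\#\BFtors{Y}&=m^{\frac{\#G}{2}+1}[\Q(\zeta_m):K]\,2^{1-\frac{\#G}{2}}\prod_{\chi\ \mathrm{odd}}L_S(0,\chi)\\
&=m^{\frac{\#G}{2}+1}\,2^{\#S_{K,\mathrm{fin}}+1}\frac{[\Q(\zeta_m):K]}{w_K\,\mathcal{Q}_{K,S}}\,h^-_{K,S},
\end{aligned}
\]
which is Theorem~\ref{subMTH}. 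Only then does the corrected $S$-comparison produce $2^{t+1}$.
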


Let $X$ be a bouquet digraph
(meaning a digraph with a single vertex)
such that
$$
E_X=\{e_0, e_{i,j_i}\ |\ 
1\leq i< m,\ (i,m)=1,\ j_i=1,\ldots ,i\}.
$$
We define a voltage assignment $\alpha:E_X\to G_m:=\Gal(\Q(\zeta_m)/\Q)$ by
$$\alpha(e_0)=\sigma_1^{-1}\hspace{10pt}\text{and}\hspace{10pt}
\alpha(e_{i,j_i})=\sigma_i^{-1}.$$
Let $\pi:G_m\to G$ be the natural projection.
We denote the induced homomorphism $\Q[G_m]\to\Q[G]$
also by $\pi$.
Let
$$
Y:=X(G,\pi\circ\alpha)\hspace{10pt}\mathrm{and}\hspace{10pt}
Z:=X(G_m,\alpha)
$$
be the derived digraphs.
In this case,
$Y$ and $Z$ are strongly connected and finite
by \cite[Proposition 4.1]{LMV}.
Moreover, we have $\delta_Y=\delta_Z=0$ by \cite[Proposition 4.7]{LMV}.
\begin{prop}\label{gamma_L}
For a non-trivial character $\chi\in\CG{G}$,
we have
$$
g_{Y/X}(1,\chi^{-1})=m\cdot L_S(0,\chi).
$$
\end{prop}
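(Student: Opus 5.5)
Since $X$ is a bouquet with one vertex, $\Z V_Y=\Z[G]V_X$ is free of rank one over $\Z[G]$. The operator $\mathcal{I}_Y-\mathcal{A}_Yu$ is therefore multiplication by a single element of $\Z[G][u]$, and this element is its own determinant. The plan is to write $\gamma_{Y/X}(u)$ down explicitly, specialize at $u=1$, apply $\chi^{-1}$, and compare the result with \eqref{chi_theta1} and Proposition~\ref{chi_theta2}.

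\textbf{Step 1: the equivariant determinant.} Following the convention of \cite[\S4.1]{LMV}, an edge $e$ with voltage $g$ contributes $g$ times the terminal vertex to the image of the vertex. Hence $\mathcal{A}_Y$ is multiplication by $\sum_{e\in E_X}\pi(\alpha(e))$. Counting edges with the voltages chosen above gives
$$
\gamma_{Y/X}(u)=1-u\,\pi\Bigl(\sigma_1^{-1}+\sum_{\substack{1\le i<m\\(i,m)=1}} i\,\sigma_i^{-1}\Bigr),
$$
because $e_0$ contributes $\sigma_1^{-1}$ and the $i$ edges $e_{i,j_i}$ each contribute $\sigma_i^{-1}$. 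If the convention in \cite{LMV} turns out to be inverse voltages, the same computation goes through with $\chi$ in place of $\chi^{-1}$. The choice of $\chi^{-1}$ in the statement indicates the convention above, and I would check this against \cite{LMV}.

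\textbf{Step 2: evaluation.} Set $u=1$ and apply $\chi^{-1}$, viewed through $\pi$ as the character $\chi^{-1}\circ\pi$ of $G_m$. Since $\chi^{-1}(\sigma_1^{-1})=1$, the constant $1$ cancels against the $e_0$ term, leaving
$$
g_{Y/X}(1,\chi^{-1})=-\sum_{\substack{1\le i<m\\(i,m)=1}} i\,\chi(\sigma_i).
$$
The sum may be extended to $i=m$ harmlessly, since $(m,m)\neq1$ for $m>2$. By \eqref{chi_theta1}, applied to the non-trivial character $\chi\circ\pi$ of $G_m$, this equals $m\,\chi^{-1}(\theta_m)$. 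Proposition~\ref{chi_theta2} then gives $m\,L_S(0,\chi)$, where $L_S(s,\chi\circ\pi)=L_S(s,\chi)$ because the Euler factors coincide for primes outside $S$.

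\textbf{Main obstacle.} I expect the only delicate point to be the bookkeeping of conventions: whether $\mathcal{A}_Y$ corresponds to $\sum\alpha(e)$ or $\sum\alpha(e)^{-1}$, and the fact that characters of $G$ are identified with characters of $G_m$ trivial on $\ker\pi$. Non-triviality of $\chi$ is needed for \eqref{chi_theta1}, where the constant term $\tfrac12\sum\chi(\sigma_a)$ vanishes.
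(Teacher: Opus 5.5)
Your proposal is correct and is essentially the paper's argument. The paper likewise uses \cite[Corollary 4.6]{LMV} to get $1-\sum_{e\in E_X}\alpha(e)=-\sum_a a\,\sigma_a^{-1}$, which settles the voltage convention you flag in favour of your choice, and then applies $\chi^{-1}\circ\pi$ together with \eqref{chi_theta1} and Proposition~\ref{chi_theta2}. The only cosmetic difference is that the paper first computes $\gamma_{Z/X}(1)$ over $G_m$ and then uses $\gamma_{Y/X}(1)=\pi(\gamma_{Z/X}(1))$, a factorization it reuses in Lemma~\ref{norm}, whereas you compute on $Y$ directly.
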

\begin{proof}
We first consider the digraph $Z$.
Since $X$ is a bouquet digraph, $\Z V_Z$ is a free $\Z[G_m]$-module of rank one.
Therefore, by \cite[Corollary 4.6]{LMV}, we have
$$
\gamma_{Z/X}(1)
={\textstyle\det_{\Z[G_m]}(\mathcal{I}_Z-\mathcal{A}_Z)}
=1-\sum_{e\in E_X}\alpha(e)
=-\sum_{\substack{1\leq a\leq m\\(a,m)=1}}a\sigma_a^{-1}.
$$
Note that we can regard $\CG{G}$ as a subgroup of $\CG{G_m}$
via $\chi\mapsto\chi\circ\pi$.
By (\ref{chi_theta1}) and Proposition \ref{chi_theta2},
for a non-trivial character $\chi\in\CG{G}$,
we have
$$
\chi^{-1}\circ\pi(\gamma_{Z/X}(1))=
m\cdot\chi^{-1}\circ\pi(\theta_m)
=m\cdot L_S(0,\chi\circ\pi)
=m\cdot L_S(0,\chi),
$$
where $\theta_m$ is the Stickelberger element defined in \S\ref{SE}.
By the definition of $Y$,
we have
$$
\gamma_{Y/X}(1)
=1-\sum_{e\in E_X}\pi\circ\alpha(e)
=\pi(\gamma_{Z/X}(1)).
$$
By definition, we have $g_{Y/X}(1,\chi^{-1})=\chi^{-1}(\gamma_{Y/X}(1))$.
Hence, we obtain the proposition.
\end{proof}

Let $K^+$ and $\Q(\zeta_m)^+$ be the maximal real subfields
of $K$ and $\Q(\zeta_m)$ respectively.
We set $G^+:=\Gal(K^+/\Q)$ and $G_m^+:=\Gal(\Q(\zeta_m)^+/\Q)$.
Let $\pi^+:G\to G^+$ and $\pi_m^+:G_m\to G_m^+$ be
the natural projections.
We also denote the induced homomorphisms
$\Q[G]\to\Q[G^+]$ and $\Q[G_m]\to\Q[G_m^+]$
by $\pi^+$ and $\pi_m^+$ respectively.
Let
$$
Y^+:=X(G^+,\pi^+\circ\pi\circ\alpha)\hspace{10pt}\text{and}\hspace{10pt}
Z^+:=X(G_m^+,\pi_m^+\circ\alpha)
$$
be the derived digraphs,
which are strongly connected and finite.
Moreover, we have $\delta_{Y^+}=\delta_{Z^+}=0$.
\begin{prop}\label{mSCNF3}
Under Assumption \ref{NTZ},
we have
$$
\frac{m(Y)\cdot\#\BFtors{Y}}{m(Y^+)\cdot\#\BFtors{Y^+}}
=m^{\frac{\# G}{2}}\frac{2^{r+1}}{w_K\cdot\mathcal{Q}_{K,S}}\ \hks^-,
$$
where $r$ is defined in (\ref{r}).
\end{prop}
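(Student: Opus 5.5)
By Theorem \ref{LMV2.3}, applied to $Y$ and to $Y^+$ (both have $\delta=0$), the left-hand side equals $g_Y^*(1)/g_{Y^+}^*(1)$. The plan is therefore to compute this ratio of leading coefficients through the decomposition \eqref{g_decomp}, isolate the odd-character factors, and then apply Proposition \ref{gamma_L} together with the minus $S$-class number formula, Theorem \ref{mSCNF2}.

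\emph{Step 1: decompose both leading coefficients.} By \eqref{g_decomp},
$$g_Y^*(1)=\prod_{\chi\in\CG{G}}g_{Y/X}^*(1,\chi)\quad\text{and}\quad g_{Y^+}^*(1)=\prod_{\chi\in\CG{G^+}}g_{Y^+/X}^*(1,\chi).$$
Identify $\CG{G^+}$ with the even characters of $G$ via $\pi^+$. Since $\pi^+\circ\pi\circ\alpha$ is obtained from $\pi\circ\alpha$ by composing with $\pi^+$, we get $\gamma_{Y^+/X}(u)=\pi^+(\gamma_{Y/X}(u))$, because the determinant over a rank-one module is just the element $1-u\sum_e(\text{voltage})$. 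Hence for $\chi$ even, $g_{Y^+/X}(u,\chi)=g_{Y/X}(u,\chi\circ\pi^+)$, and the even factors, including the trivial character with its possible zero at $u=1$, cancel identically. Note that the ratio is well-defined: $g_Y^*(1)$ and $g_{Y^+}^*(1)$ are nonzero by definition, and these are exactly the factors that cancel. We are left with
$$\frac{g_Y^*(1)}{g_{Y^+}^*(1)}=\prod_{\chi\ \mathrm{odd}}g_{Y/X}^*(1,\chi).$$

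\emph{Step 2: evaluate the odd factors.} For odd $\chi$, which is nontrivial, Proposition \ref{gamma_L} gives $g_{Y/X}(1,\chi)=m\,L_S(0,\chi^{-1})$. Under Assumption \ref{NTZ} this is nonzero, because $\chi^{-1}$ is also odd. So there is no zero at $u=1$, and the leading coefficient coincides with the value. As $\chi$ runs over the odd characters so does $\chi^{-1}$, and there are $\#G/2$ odd characters. Therefore
$$\prod_{\chi\ \mathrm{odd}}g_{Y/X}^*(1,\chi)=m^{\#G/2}\prod_{\chi\ \mathrm{odd}}L_S(0,\chi).$$
Theorem \ref{mSCNF2} then identifies this product with $m^{\#G/2}\,2^{r+1}\hks^-/(w_K\mathcal{Q}_{K,S})$, which is the claim.

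\emph{Main obstacle.} The delicate point is the even-factor cancellation in Step 1. It requires checking that the $\chi$-specialization of $\gamma_{Y^+/X}$ really equals that of $\gamma_{Y/X}$ as polynomials in $u$, not merely at $u=1$. This is needed so that the orders of vanishing at $u=1$ match and the corresponding leading coefficients $g^*$ are literally equal. Since $X$ is a bouquet, each of these is the explicit polynomial $1-u\,\chi\bigl(\sum_e\pi\circ\alpha(e)\bigr)$, so the check reduces to the compatibility $\chi\circ\pi^+=\chi$ on group-ring elements.
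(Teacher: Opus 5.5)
Your proposal is correct and follows essentially the same route as the paper: you use Proposition~\ref{gamma_L} with Assumption~\ref{NTZ} to see that the odd factors do not vanish at $u=1$, so that $g^*_{Y/X}(1,\chi)=g_{Y/X}(1,\chi)$; you rewrite the left-hand side as $g_Y^*(1)/g_{Y^+}^*(1)$ via \eqref{g_decomp} and Theorem~\ref{LMV2.3}; and you finish with Theorem~\ref{mSCNF2}. Your explicit check that $\gamma_{Y^+/X}(u)=\pi^+(\gamma_{Y/X}(u))$ holds as polynomials in $u$, so that the even factors cancel exactly, is a point the paper uses only tacitly, and it is a worthwhile addition.
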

\begin{proof}
By Remark \ref{rmk_NTZ} and Proposition \ref{gamma_L},
for any odd character $\chi\in \CG{G}$,
we have
$g_{Y/X}(1,\chi^{-1}) \ne 0$
under Assumption \ref{NTZ}.
Therefore, we have
$$
g_{Y/X}^*(1,\chi^{-1}) = m\cdot L_S(0,\chi).
$$
By taking the product over all odd characters $\chi\in\CG{G}$,
we have
\begin{equation}\label{gl}
\prod_{\substack{\chi\in\CG{G}\\\chi:\mathrm{odd}}}g_{Y/X}^*(1,\chi)
=m^{\frac{\# G}{2}}
\prod_{\substack{\chi\in\CG{G}\\\chi:\mathrm{odd}}}L_S(0,\chi).
\end{equation}
The left hand side is equal to
$$
\frac{\prod_{\chi\in\CG{G}}g_{Y/X}^*(1,\chi)}
{\prod_{\chi\in\CG{G^+}}g_{Y^+/X}^*(1,\chi)}
=\frac{g_Y^*(1)}{g_{Y^+}^*(1)}
=\frac{m(Y)\cdot\#\BFtors{Y}}{m(Y^+)\cdot\#\BFtors{Y^+}}
$$
by (\ref{g_decomp}) and Theorem \ref{LMV2.3}.
Now, applying Theorem \ref{mSCNF2} to the right hand side of (\ref{gl})
completes the proof.
\end{proof}

We now calculate the three values
$\#\BFtors{Y^+}$, $m(Y^+)$, and $|m(Y)|$
following \cite[\S5.3]{LMV}.
The latter two values can be obtained by
the same argument;
that is, we have
\begin{equation}\label{m}
m(Y^+)=(-1)^{\frac{\# G}{2}}\cdot\frac{\# G}{2}
\quad\text{and}\quad
|m(Y)|=2^{\frac{\# G}{2}-1}\cdot\frac{\# G}{2}.
\end{equation}
For the value $\#\BFtors{Y^+}$,
an additional factor $[\Q(\zeta_m):K]$ appears in our setting.
By an argument similar to that in \cite[Proposition 5.3]{LMV},
we can show the following:
\begin{equation}\label{LMV5.3}
\gamma_{Z^+/X}(1)=-m\cdot N_{G_m^+}\in\Z[G_m^+].
\end{equation}
\begin{lem}\label{norm}
We have
$$
\gamma_{Y^+/X}(1)=-m[\Q(\zeta_m):K]\cdot N_{G^+}\in\Z[G^+].
$$
\end{lem}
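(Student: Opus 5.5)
We reduce to the formula \eqref{LMV5.3} for $Z^+$ by pushing it forward along the natural projection $\Q(\zeta_m)^+\to K^+$ on Galois groups. Since $K^+\subset\Q(\zeta_m)^+$, the restriction map induces a surjection $\rho:G_m^+\to G^+$. It satisfies $\rho\circ\pi_m^+=\pi^+\circ\pi$ as maps $G_m\to G^+$, because both are restriction from $\Q(\zeta_m)$ to $K^+$. We also write $\rho$ for the induced ring homomorphism $\Z[G_m^+]\to\Z[G^+]$.

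The first step uses the bouquet description \cite[Corollary 4.6]{LMV}, exactly as in the proof of Proposition \ref{gamma_L}. Since $X$ has a single vertex, we have
$$
\gamma_{Y^+/X}(1)=1-\sum_{e\in E_X}\pi^+\circ\pi\circ\alpha(e)
=\rho\Bigl(1-\sum_{e\in E_X}\pi_m^+\circ\alpha(e)\Bigr)=\rho(\gamma_{Z^+/X}(1)).
$$
By \eqref{LMV5.3} this equals $-m\,\rho(N_{G_m^+})$.

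The second step computes $\rho(N_{G_m^+})$. Since $\rho$ is a surjective group homomorphism, each fibre has exactly $\#\ker\rho=[\Q(\zeta_m)^+:K^+]$ elements. Hence $\rho(N_{G_m^+})=[\Q(\zeta_m)^+:K^+]\cdot N_{G^+}$.

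It remains to check that $[\Q(\zeta_m)^+:K^+]=[\Q(\zeta_m):K]$. This holds because $K$ and $\Q(\zeta_m)$ are both imaginary, so $[K:K^+]=[\Q(\zeta_m):\Q(\zeta_m)^+]=2$, and the tower law gives the equality. Combining the two steps yields $\gamma_{Y^+/X}(1)=-m[\Q(\zeta_m):K]\,N_{G^+}$.

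The only point needing any care is the compatibility $\rho\circ\pi_m^+=\pi^+\circ\pi$, which lets $\gamma_{Y^+/X}(1)$ be written as the image of $\gamma_{Z^+/X}(1)$. Once that is in place, the rest is bookkeeping with the fibres of $\rho$.
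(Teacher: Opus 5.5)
Your proof is correct and takes essentially the same route as the paper: both use the compatibility $\pi^+\circ\pi=\rho\circ\pi_m^+$ to write $\gamma_{Y^+/X}(1)$ as the image of $\gamma_{Z^+/X}(1)$ under the projection $G_m^+\to G^+$, and then apply \eqref{LMV5.3}. The paper reaches this through $\gamma_{Y/X}(1)$ and $\gamma_{Z/X}(1)$ rather than the bouquet formula directly, and you additionally justify the fibre count and the equality $[\Q(\zeta_m)^+:K^+]=[\Q(\zeta_m):K]$, which the paper leaves implicit in its last step.
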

\begin{proof}
We also denote by $\pi$ the induced projection $G_m^+\to G^+$. 
Since $\pi^+\circ\pi=\pi\circ\pi_m^+$,
it follows from (\ref{LMV5.3}) that we have
\begin{align*}
\gamma_{Y^+/X}(1) &= \pi^+(\gamma_{Y/X}(1))\\
  &= \pi^+\circ\pi(\gamma_{Z/X}(1))\\
  &= \pi\circ\pi_m^+(\gamma_{Z/X}(1))\\
  &= \pi(\gamma_{Z^+/X}(1))\\
  &= -m\cdot\pi(N_{G_m^+})\\
  &= -m[\Q(\zeta_m):K]\cdot N_{G^+},
\end{align*}
as desired.
\end{proof}
\begin{prop}\label{BFY^+}
We have an isomorphism
$$
\BF{Y^+}\cong\Z^{\frac{\# G}{2}-1}\oplus\Z/m[\Q(\zeta_m):K]\Z
$$
of $\Z$-modules.
In particular, we have
$\#\BFtors{Y^+}=m[\Q(\zeta_m):K]$.
\end{prop}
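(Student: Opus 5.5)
Since $X$ is a bouquet digraph, $\Z V_{Y^+}$ is a free $\Z[G^+]$-module of rank one, and under this identification the Bowen--Franks operator $\mathcal{BF}_{Y^+}=\mathcal{I}_{Y^+}-\mathcal{A}_{Y^+}$ is multiplication by the element $1-\sum_{e\in E_X}\pi^+\circ\pi\circ\alpha(e)$. This element equals $\gamma_{Y^+/X}(1)$, as in the proof of Proposition \ref{gamma_L}. By Lemma \ref{norm}, it is $-m[\Q(\zeta_m):K]\cdot N_{G^+}$. Hence
$$
\BF{Y^+}\cong \Z[G^+]/\bigl(n\cdot N_{G^+}\bigr)\Z[G^+],\qquad n:=m[\Q(\zeta_m):K],
$$
because the sign is a unit. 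The plan is to compute this quotient explicitly as a $\Z$-module.

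The key step is to identify the image. Since $\sigma N_{G^+}=N_{G^+}$ for every $\sigma\in G^+$, the ideal $n N_{G^+}\Z[G^+]$ is the free rank-one $\Z$-module $\Z\cdot nN_{G^+}$. Next, choose the $\Z$-basis of $\Z[G^+]$ consisting of $N_{G^+}$ together with the elements $\sigma$ for $\sigma\in G^+\setminus\{1\}$. This is a basis because the change-of-basis matrix from $\{\sigma\}_{\sigma\in G^+}$ is unitriangular. The element $nN_{G^+}$ is $n$ times the first basis vector. Therefore
$$
\Z[G^+]/\Z\, nN_{G^+}\cong \Z/n\Z\oplus\Z^{\#G^+-1}=\Z/n\Z\oplus\Z^{\frac{\#G}{2}-1},
$$
using $\#G^+=\#G/2$ since $K$ is imaginary. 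The torsion part is then $\Z/n\Z$, which gives $\#\BFtors{Y^+}=m[\Q(\zeta_m):K]$.

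The main point to check is the identification of $\mathcal{BF}_{Y^+}$ with multiplication by $\gamma_{Y^+/X}(1)$ on $\Z[G^+]$. This means confirming the convention in the identification $\Z V_{Y^+}=\Z[G^+]V_X$ from \cite[Corollary 4.6]{LMV}, i.e.\ whether the voltages act by $\alpha(e)$ or $\alpha(e)^{-1}$. This choice does not matter here. Inverting all group elements is a ring automorphism of $\Z[G^+]$, and it fixes $N_{G^+}$, so the cokernel is the same in either case. The rest is routine linear algebra.
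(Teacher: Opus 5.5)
Your proof is correct and follows essentially the paper's argument. The paper writes $\BF{Y^+}=\Z[G^+]/(\gamma_{Y^+/X}(1))$ and uses the split exact sequence $0\to N_{G^+}\Z[G^+]/(\gamma_{Y^+/X}(1))\to\BF{Y^+}\to\Z[G^+]/N_{G^+}\Z[G^+]\to0$ together with Lemma \ref{norm}; your $\Z$-basis $\{N_{G^+}\}\cup\{\sigma\mid\sigma\ne1\}$ is simply an explicit splitting of that sequence.
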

\begin{proof}
By the same argument as in the proof of \cite[Proposition 5.4]{LMV},
we have a split exact sequence
$$
0\to N_{G^+}\Z[G^+]/(\gamma_{Y^+/X}(1))
\to\BF{Y^+}
\to\Z[G^+]/N_{G^+}\Z[G^+]
\to0.
$$
Combining this with Lemma \ref{norm}
completes the proof.
\end{proof}

\begin{thm}\label{subMTH}
Under Assumption \ref{NTZ},
there exists a digraph $Y$ such that
$$
\#\BFtors{Y}=m^{\frac{\# G}{2}+1}\hspace{1pt}2^{\#(S_{K,\mathrm{fin}})+1}\ 
\frac{[\Q(\zeta_m):K]}{w_K\cdot\mathcal{Q}_{K,S}}\ \hks^-
$$
where 
$\hks^-, \mathcal{Q}_{K,S}$, and $w_K$ denote 
the minus $S$-class number of $K$,
the $S$-unit index, and 
the number of roots of unity in $K$
respectively.
\end{thm}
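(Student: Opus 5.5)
We take $Y=X(G,\pi\circ\alpha)$ as constructed above and solve the identity of Proposition~\ref{mSCNF3} for $\#\BFtors{Y}$. Proposition~\ref{mSCNF3} gives
$$
\#\BFtors{Y}=\frac{m(Y^+)\cdot\#\BFtors{Y^+}}{m(Y)}\cdot m^{\frac{\# G}{2}}\frac{2^{r+1}}{w_K\cdot\mathcal{Q}_{K,S}}\ \hks^-.
$$
We then substitute the values computed above: $\#\BFtors{Y^+}=m[\Q(\zeta_m):K]$ from Proposition~\ref{BFY^+}, and $m(Y^+)$, $|m(Y)|$ from (\ref{m}). Since $\#\BFtors{Y}$ and every factor on the right other than $m(Y^+)/m(Y)$ are positive, it suffices to work with absolute values. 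We have $|m(Y^+)|/|m(Y)|=\frac{\#G}{2}\big/\bigl(2^{\frac{\#G}{2}-1}\frac{\#G}{2}\bigr)=2^{1-\frac{\#G}{2}}$, so
$$
\#\BFtors{Y}=m^{\frac{\#G}{2}+1}[\Q(\zeta_m):K]\,2^{r+2-\frac{\#G}{2}}\frac{\hks^-}{w_K\mathcal{Q}_{K,S}}.
$$

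It remains to express $r$ in terms of $\#S_{K,\mathrm{fin}}$. By (\ref{r}), $r=\#S_K-1=\#S_{K,\mathrm{fin}}+\#S_{K,\infty}-1$. Since $K$ is imaginary abelian, all archimedean places of $K$ are complex, so $\#S_{K,\infty}=\#G/2$. Hence
$$
r+2-\tfrac{\#G}{2}=\#S_{K,\mathrm{fin}}+1,
$$
which is exactly the power of $2$ in the statement.

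The main obstacle is the sign and absolute-value bookkeeping, since (\ref{m}) records only $|m(Y)|$. The cleanest route is to note that the left side of Proposition~\ref{mSCNF3} must equal the right side, which is positive: $w_K$, $\mathcal{Q}_{K,S}$ and $\hks^-$ are positive, and the product of the $L_S(0,\chi)$ over odd $\chi$ is positive by Theorem~\ref{mSCNF2}. Therefore $m(Y)$ and $m(Y^+)$ have the same sign, and taking absolute values throughout is legitimate. We should also check that the computation of $|m(Y)|$ in (\ref{m}) uses the same normalization of $g_Y^*(1)$ as (\ref{g^*}). That check is inherited from \cite[\S5.3]{LMV}, so we take it as given.
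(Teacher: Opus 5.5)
Your proposal is correct and follows the paper's proof essentially step for step. You solve Proposition~\ref{mSCNF3} for $\#\BFtors{Y}$, using positivity of its right-hand side to dispose of the sign of $m(Y^+)/m(Y)$. You then substitute (\ref{m}) and Proposition~\ref{BFY^+}, and finish with $r=\frac{\#G}{2}+\#S_{K,\mathrm{fin}}-1$.
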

\begin{proof}
Note that the right hand side of Proposition \ref{mSCNF3} is positive.
By (\ref{m}) and Propositions \ref{mSCNF3} and \ref{BFY^+},
we have
$$
\#\BFtors{Y}
=m^{\frac{\# G}{2}+1}\hspace{1pt}2^{r-\frac{\# G}{2}+2}\frac{[\Q(\zeta_m):K]}
{w_K\cdot\mathcal{Q}_{K,S}}\ \hks^-.
$$
Since $r=\# S_K-1=\frac{\# G}{2} + \# S_{K,\mathrm{fin}}-1$,
this completes the proof of the theorem.
\end{proof}
\begin{proof}[Proof of Theorem \ref{MTH}] 
First, by \cite[Lemma 2.1]{Tate}, we have
$$
\Rks = R_K\left(\prod_{\pe\in S_{K,\mathrm{fin}}}\log N(\pe)\right)\frac{h_K}{\hks}
$$
where $R_K$ is the usual regulator.
Therefore, we have
$$
\frac{\Rks}{R_{K^+,S}}=\frac{R_K}{R_{K^+}}\cdot
\frac{\prod_{\pe\in S_{K,\mathrm{fin}}}\log N(\pe)}
{\prod_{\pe\in S_{K^+,\mathrm{fin}}}\log N(\pe)}
\cdot\frac{h_K^-}{\hks^-}
=2^t\cdot\frac{R_K}{R_{K^+}}
\cdot\frac{h_K^-}{\hks^-}
$$
where $t$ denotes the number of finite places in $S_{K^+}$
which are inert in $K$.
Let $S_{K,\infty}$ be the set of infinite places of $K$.
By \cite[Proposition 4.16]{W}, we have
$$
\frac{R_K}{R_{K^+}}=\frac{1}{\mathcal{Q}_K}2^{\# S_{K,\infty}-1}.
$$
Combining the above results with Proposition \ref{RKS}, we have
$$
\frac{\hks^-}{\mathcal{Q}_{K,S}}
=2^{-\# S_{K,\mathrm{fin}}+t}
\frac{h_K^-}{\mathcal{Q}_K}.
$$
By Theorem \ref{subMTH}, we obtain the theorem.
\end{proof}

Finally, we present special cases as corollaries.
\begin{cor}\label{corMTH}
Let $K$ be an imaginary quadratic field
with conductor $m$.
Then, there exists a digraph $Y$ such that
$$
\#\BFtors{Y}=m^2
\frac{\varphi(m)}{w_K}h_K,
$$
where $\varphi$ denotes Euler's totient function.
\end{cor}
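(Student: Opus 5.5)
The plan is to specialize Theorem \ref{MTH} to the case $[K:\Q]=2$ and simplify every factor on the right-hand side.

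First, Assumption \ref{NTZ} holds automatically for an imaginary quadratic field, by Remark \ref{rmk_NTZ_holds}(2). Hence Theorem \ref{MTH} applies. Since $K^+=\Q$, we have $h_{K^+}=1$, so $h_K^-=h_K$. Also $\mathcal{Q}_K=1$, because $\UG{K}=\mu_K$ and so $\UG{K}=\mu_K\UG{\Q}$ trivially. Finally, $[K:\Q]/2+1=2$ and $[\Q(\zeta_m):K]=\varphi(m)/2$.

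Substituting these values into Theorem \ref{MTH} gives
$$
\#\BFtors{Y}=m^2\,2^{t+1}\frac{\varphi(m)}{2\,w_K}h_K=m^2\,2^{t}\frac{\varphi(m)}{w_K}h_K.
$$
It therefore remains to show that $t=0$.

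This is the main (though mild) point. Here $t$ counts the primes $\ell\mid m$ that are inert in $K$. Every prime dividing the conductor $m$ of $K$ ramifies in $K$, since the conductor-discriminant formula gives $|d_K|=m$. Hence no such prime is inert, and $t=0$.

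Alternatively, one can argue directly from characters. The unique odd character $\chi$ of $G$ is primitive of conductor $m$, so $\chi(\ell)=0$ for every $\ell\mid m$, which means each such $\ell$ ramifies. This also agrees with Remark \ref{rmk_NTZ}, which says that no prime in $S$ splits.

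Plugging $t=0$ into the display above yields the stated formula.
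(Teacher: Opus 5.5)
Your proposal is correct and is essentially the paper's own proof: invoke Remark~\ref{rmk_NTZ_holds} for Assumption~\ref{NTZ}, then specialize Theorem~\ref{MTH} using $h_K^-=h_K$, $\mathcal{Q}_K=1$, $[\Q(\zeta_m):K]=\varphi(m)/2$, and $t=0$ (every prime dividing $m$ ramifies in $K$, so none is inert). Your handling of the power of $2$ is also right: the factor $2^{t+1}$ cancels against the $2$ in $\varphi(m)/2$.
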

\begin{proof}
In this case,
Assumption \ref{NTZ} is satisfied
by Remark \ref{rmk_NTZ_holds}. 
Since all finite places in $S_{K^+}=S_{\Q}$ are totally ramified in $K$,
we have $t=0$.
Moreover, we have 
$\mathcal{Q}_K=1$, $[\Q(\zeta_m):K]=\varphi(m)/2$, and $h_K^-=h_K$.
Hence, by Theorem \ref{MTH}, we obtain the corollary.
\end{proof}
\begin{cor}\label{corcycMTH}
For an odd rational prime $p$ and
a positive integer $n$,
let $K$ be a cyclotomic field
with conductor $p^n$.
Then, there exists a digraph $Y$ such that
$$
\#\BFtors{Y}=p^\frac{np^{n-1}(p-1)}{2}h_K^{-}.
$$
\end{cor}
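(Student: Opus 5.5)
The plan is to specialize Theorem \ref{MTH} to $K=\Q(\zeta_{p^n})$ with $m=p^n$ and evaluate every factor on the right-hand side. The hypothesis is covered by Remark \ref{rmk_NTZ_holds}(1): Assumption \ref{NTZ} holds for $K=\Q(\zeta_{p^n})$ with $S=\{p,\infty\}$, and $S$ is exactly the set of primes dividing $m$ together with $\infty$. So Theorem \ref{MTH} applies and produces a digraph $Y$ with
$$\#\BFtors{Y}=m^{\frac{[K:\Q]}{2}+1}\,2^{t+1}\frac{[\Q(\zeta_m):K]}{w_K\cdot\mathcal{Q}_K}h_K^-.$$

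Next I compute the invariants. Since $K=\Q(\zeta_m)$, we have $[\Q(\zeta_m):K]=1$. Also $[K:\Q]=\varphi(p^n)=p^{n-1}(p-1)$. Because $p$ is odd, $\mu_K=\mu_{2p^n}$, so $w_K=2p^n=2m$. For the Hasse unit index, the classical fact for prime-power cyclotomic fields (\cite[Corollary 4.13]{W}) gives $\mathcal{Q}_K=1$. For $t$: the prime $p$ is totally ramified in $K$, so $S_{K^+,\mathrm{fin}}$ has a single place, and that place ramifies rather than stays inert in $K/K^+$. Hence $t=0$.

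Substituting these values gives
$$\#\BFtors{Y}=m^{\frac{p^{n-1}(p-1)}{2}+1}\cdot 2\cdot\frac{1}{2m}h_K^-=p^{\frac{np^{n-1}(p-1)}{2}}h_K^-,$$
using $m^{\frac{\varphi(m)}{2}}=p^{\frac{n\varphi(p^n)}{2}}$. This is the claimed formula.

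None of the steps is a serious obstacle. The only point that needs care is $\mathcal{Q}_K=1$, which I cite from Washington rather than reprove. As an alternative, I can check it through the relation $\hks^-/\mathcal{Q}_{K,S}=2^{-\#S_{K,\mathrm{fin}}+t}h_K^-/\mathcal{Q}_K$ from the proof of Theorem \ref{MTH}, although that route is not needed.
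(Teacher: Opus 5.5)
Your proposal is correct and is exactly the paper's argument: you invoke Remark \ref{rmk_NTZ_holds}(1) for Assumption \ref{NTZ}, then substitute $[\Q(\zeta_{p^n}):K]=1$, $w_K=2p^n$, $\mathcal{Q}_K=1$ (from \cite[Corollary 4.13]{W}) and $t=0$ into Theorem \ref{MTH}. Your suggested alternative check via $\hks^-/\mathcal{Q}_{K,S}=2^{-\#S_{K,\mathrm{fin}}+t}h_K^-/\mathcal{Q}_K$ would not settle $\mathcal{Q}_K=1$ on its own, since it only replaces $\mathcal{Q}_K$ by $\mathcal{Q}_{K,S}$ and $\hks^-$; as you note, though, it is not needed.
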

\begin{proof}
In this case,
Assumption \ref{NTZ} is satisfied
by Remark \ref{rmk_NTZ_holds}. 
It is clear that $[\Q(\zeta_{p^n}):K]=1$ and $w_K=2p^n$.
By \cite[Corollary 4.13]{W},
we have $\mathcal{Q}_K=1$.
Moreover,
since $p$ is totally ramified in $K$,
we have $t=0$.
Hence, by Theorem \ref{MTH}, we obtain the corollary.
\end{proof}
\subsection{The character components of the Bowen--Franks groups}\label{__character components}
Let $K/\Q$ be an imaginary abelian extension with conductor $m$ and Galois group $G$.
We denote by $S$ the finite set consisting of the primes dividing $m$ and $\infty$,
and set $S_\mathrm{fin}:=S\setminus\{\infty\}$.
Let $Y/X$ be the Galois cover of digraphs with Galois group $G$
constructed in $\S$\ref{__MTH}.
Let $\psi$ be an odd character of $G$
with conductor $f_\psi$.
In this subsection,
for any $\ell\in S_\mathrm{fin}$,
we denote by $\psi(\ell)$ the value at $\ell$ of the primitive character associated with $\psi$, that is, $\psi(\ell)=\psi^{\mathrm{prim}}(\ell)$.
Fix an odd rational prime $p$.
Let $\Delta$ be the maximal subgroup of $G$
whose order is prime to $p$.
When $\Q(\zeta_p)\subset K$,
let $\omega:\Delta\to\Z_p^\times$
be the character determined by
$\sigma(\zeta_p)=\zeta_p^{\omega(\sigma)}$
for $\sigma\in\Delta$,
where the exponent is understood modulo $p$.
We call $\omega$ the Teichm\"uller character.
Throughout this subsection,
we impose the following two assumptions.
Note that Assumption \ref{NTZpsi} is
the analogue of Assumption \ref{NTZ}
for the fixed odd character $\psi$.
\begin{asmp}\label{NTZpsi}
For the fixed $\psi$ and any $\ell\in S_\mathrm{fin}$, we have $\psi(\ell)\ne1$.
\end{asmp}
\begin{asmp}\label{psineomega}
If $\Q(\zeta_p)\subset K$,
we have $\psi|_\Delta\ne\omega$.
\end{asmp}
In this subsection,
we study the relationship between
the $\psi$-components of the $p$-adic completion of the Bowen--Franks group and
the ideal class group of $K$.
We denote by
$$\BFp{Y}:=\BF{Y}\otimes_\Z\Z_p
\quad\text{and}\quad
A_K:=\Cl{K}\otimes_\Z\Z_p$$
the $p$-adic completion of the Bowen--Franks group of $Y$
and the ideal class group of $K$ respectively.
These are $\Z_p[G]$-modules.
Set $O_\psi:=\Z_p[\psi(G)]$.
Note that $O_\psi$ is a discrete valuation ring.
Let $\varpi$ be a uniformizer of $O_\psi$,
and let $v_\psi$ be the corresponding valuation
normalized by $v_\psi(\varpi)=1$.
We define the $\psi$-components of $\BFp{Y}$ and $A_K$ by
$$\BFopsi{Y}:=\BFp{Y}\otimes_{\Z_p[G]}O_\psi
\quad\text{and}\quad
A_K^\psi:=A_K\otimes_{\Z_p[G]}O_\psi$$
respectively, where $G$ acts on $O_\psi$ via $\psi$.
The goal of this subsection is to prove the following theorem,
which is the second main result of the present paper.
\begin{thm}\label{__MTH2}
Suppose that Assumptions \ref{NTZpsi} and \ref{psineomega} hold.
Let $T_\psi$ be the subset of $S_\mathrm{fin}$ defined by
$$
T_\psi:=\{\ \ell\in S_\mathrm{fin}\ \mid\ 
\text{the order of $\psi(\ell)$ is a power of $p$},\ 
\ell\nmid f_\psi,\ p\nmid \#I_\ell\ \},
$$
where $I_\ell$ denotes the inertia subgroup of $\ell$ in $G$.
We set
$$
c_\psi:=\sum_{\ell\in T_\psi}v_\psi(1-\psi^{-1}(\ell)).
$$
We have
$$\#\BFopsi{Y}
=\mid m\varpi^{c_\psi}\mid_p^{-\rank_{\Z_p}O_\psi}\# A_K^\psi,$$
where $|\cdot|_p$ denotes the usual $p$-adic absolute value.
In particular, if $p\nmid\# G$, then we have
$$\#\BFopsi{Y}
=p^\delta\# A_K^\psi,$$
where $\delta=0$ if $p\nmid m$, and $\delta=\rank_{\Z_p}O_\psi$ if $p\mid m$.
\end{thm}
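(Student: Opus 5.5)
The plan is to compute both sides of the identity in Theorem~\ref{__MTH2} as orders of cyclic $O_\psi$-modules. The Bowen--Franks side is then an explicit Euler-factor computation, and the class group side is an input from the equivariant theory of minus class groups.

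\textbf{Bowen--Franks side.} Since $X$ is a bouquet, $\Z V_Y\cong\Z[G]$ is free of rank one. The operator $\mathcal{BF}_Y$ is therefore multiplication by $\gamma_{Y/X}(1)=1-\sum_{e}\pi\circ\alpha(e)$, so $\BF{Y}\cong\Z[G]/(\gamma_{Y/X}(1))$. Tensoring with $O_\psi$ over $\Z[G]$ (right exactness) gives
$$\BFopsi{Y}\cong O_\psi/\bigl(\psi(\gamma_{Y/X}(1))\bigr).$$
By Proposition~\ref{gamma_L} applied to $\chi=\psi^{-1}$, we have $\psi(\gamma_{Y/X}(1))=g_{Y/X}(1,\psi)=m\,L_S(0,\psi^{-1})$. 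This is nonzero by Assumption~\ref{NTZpsi} and Remark~\ref{rmk_NTZ}. Hence
$$\#\BFopsi{Y}=\bigl|m\,L_S(0,\psi^{-1})\bigr|_p^{-\rank_{\Z_p}O_\psi}.$$
Next write $L_S(0,\psi^{-1})=L(0,\psi^{-1,\mathrm{prim}})\prod_{\ell\in S_\mathrm{fin},\,\ell\nmid f_\psi}(1-\psi^{-1}(\ell))$. For each such $\ell$, the factor $1-\psi^{-1}(\ell)$ is nonzero by Assumption~\ref{NTZpsi}. It is a $p$-adic unit unless the order of $\psi(\ell)$ is a power of $p$.

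\textbf{Class group side.} For $A_K^\psi$ I would invoke the equivariant description of the minus class group. Concretely, I would use Greither's computation of $\Fitt_{\Z_p[G]^-}$ of the (Pontryagin dual of the) $p$-part of the minus class group, which follows from the Mazur--Wiles main conjecture / the ETNC for abelian fields and holds for $p$ odd away from the Teichm\"uller component; this is where Assumption~\ref{psineomega} enters. One then specializes this Fitting ideal along $\psi$: since $A_K^\psi$ is a finite cyclic-type quotient over the DVR $O_\psi$, its order is read off from the image of the Fitting ideal. Greither's ideal is generated by a modified Stickelberger element. It carries the Euler factors $(1-\sigma_\ell^{-1}\nu_{I_\ell})$-type terms with norm elements $\nu_{I_\ell}$ of the inertia groups, together with $L(0,\psi^{-1,\mathrm{prim}})$ (the factor $w_K$ contributes trivially for $\psi\ne\omega$).

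\textbf{Comparing the two sides.} Under $\psi$ with $\ell\nmid f_\psi$, so that $\psi|_{I_\ell}=1$, the norm $\nu_{I_\ell}$ maps to $\#I_\ell$. When $p\mid\#I_\ell$ this becomes a non-unit, and the local factor reproduces $1-\psi^{-1}(\ell)$ up to units. When $p\nmid\#I_\ell$ the factor disappears. Thus $A_K^\psi$ has order $\bigl|L_S(0,\psi^{-1})\varpi^{-c_\psi}\bigr|_p^{-\rank O_\psi}$, the missing Euler factors being exactly those indexed by $T_\psi$. Comparing with the Bowen--Franks computation gives the main formula.

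\textbf{The case $p\nmid\#G$.} No $\psi(\ell)\ne1$ can have $p$-power order, so $T_\psi=\emptyset$ and $c_\psi=0$. Also $|m|_p=p^{-1}$ or $1$ according as $p\mid m$ or not, since $p^2\mid m$ would force $p\mid\#G$. This gives $\delta$.

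\textbf{Main obstacle.} The hardest step is the precise bookkeeping on the class group side. One must identify exactly which Euler factors survive in the $\psi$-specialization of the Fitting ideal, namely the role of $p\mid\#I_\ell$ versus $p\nmid\#I_\ell$. One must also make sure the $\psi$ versus $\psi^{-1}$ conventions, and the difference between $A_K$ and its dual, match the statement. I would first try to handle this by passing to the $p$-part of $G$ and working with Greither's explicit generators.
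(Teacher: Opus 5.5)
Your Bowen--Franks computation, the local comparison of the ideals $(\#I_\ell,1-\psi^{-1}(\ell))$, and the treatment of $p\nmid\#G$ all agree with the paper. The gap is on the class group side, and it is not a matter of conventions.

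Greither's theorem describes $\Fitt_{\Z_p[G]^-}$ of the Pontryagin dual $(A_K^-)^\vee$. Base change along $\psi$ therefore gives the order of $A_K^\vee\otimes_{\Z_p[G]}O_\psi$. With the contragredient action, this module is the dual of $A_K[\ker\psi^{-1}]$, the submodule of $A_K$ killed by $\ker(\psi^{-1}\colon\Z_p[G]\to O_\psi)$. By contrast, the theorem concerns $A_K^\psi=A_K/\ker(\psi)A_K$, which is a coinvariant module. Swapping $\psi$ and $\psi^{-1}$ does not turn invariants into coinvariants.

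The two orders agree when the relevant kernel is principal. This happens, for example, when the $p$-Sylow subgroup $\Gamma$ of $G$ is cyclic, since then $\#M[x]=\#M/xM$ for finite $M$. They do not agree in general: for $\Gamma\cong(\Z/p\Z)^2$ and $M=\mathbb{F}_p[\Gamma]/I_\Gamma^2$, with $I_\Gamma$ the augmentation ideal, one has $\#M_\Gamma=p$ but $\#M^\Gamma=p^2$ (twist by an odd character of $\Delta$ to get an odd example). So knowing $\Fitt(M^\vee)$ does not determine $\#M_\psi$. As written, your argument does not prove the first formula when $p\mid\#G$ and $\Gamma$ is non-cyclic.

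The paper avoids this by using Kurihara--Miura \cite[Theorem 0.1]{KM11}, which computes the Fitting ideal of $(\Cl{K}\otimes_\Z\Z[1/2])^-$ itself. Base change then gives $\Fitt_{O_\psi}(A_K^\psi)=\psi(\varTheta_K\otimes\Z_p)$ directly (Corollary~\ref{cor_Fitt}). The remaining work is to evaluate $\psi$ on the generators $\nu_{K/K_d}(\theta_{K_d})$. This is done after reducing to $G=\prod_\ell I_\ell$ (Remark~\ref{rmk_varTheta2}) and using $\psi|_\Delta\ne\omega$ to pass from $\varTheta_K$ to $\varTheta_K'$ (Lemma~\ref{varTheta_VS_varTheta'}). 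The result is $B_{1,\psi^{-1}}\prod_{\ell\mid m,\,\ell\nmid f_\psi}(\#I_\ell,1-\psi^{-1}(\ell))$, which is the ideal you predicted.

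If you want to keep Greither's route, you need two further things. First, a separate argument identifying $\#A_K[\ker\psi^{\pm1}]$ with $\#A_K^\psi$. Second, a precise justification of the version ``away from the Teichm\"uller component'' that you invoke.

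One smaller point in your comparison step: the claim $(\#I_\ell,1-\psi^{-1}(\ell))=(1-\psi^{-1}(\ell))$ when $p\mid\#I_\ell$ needs the estimate $v_\psi(1-\psi^{-1}(\ell))<v_\psi(p)$ for $\psi(\ell)$ of nontrivial $p$-power order. This estimate uses that $p$ is odd, as in Lemma~\ref{lem_MTH2}.
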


Following the argument in the proof of \cite[Theorem 5.11]{LMV},
we first study $\BFopsi{Y}$.
Let $B_{n,\psi}$ denote the $n$-th generalized Bernoulli number
of the primitive character associated with $\psi$.
We also denote by $L(s,\psi)$ the $L$-function
of the primitive character associated with $\psi$.
Then it is known that
$$-B_{1,\psi}=L(0,\psi).$$
\begin{prop}\label{BFo}
For an odd character $\psi\in\CG{G}$ and
each rational prime $\ell$
such that $\ell\mid m$ and $\ell\nmid f_\psi$,
we set
$$b_\ell:=v_\psi(1-\psi^{-1}(\ell))
\quad\text{and}\quad
b_\psi:=\sum_{\ell\mid m,\ell\nmid f_\psi}b_\ell.
$$
Then, we have
$$\BFopsi{Y}
\ \cong\  O_\psi/\psi(\gamma_{Y/X}(1))O_\psi
\ =\ O_\psi/m B_{1,\psi^{-1}}\varpi^{b_\psi}O_\psi
.$$
\end{prop}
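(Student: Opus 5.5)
Since $X$ is a bouquet digraph, $\Z V_Y=\Z[G]$ is free of rank one over $\Z[G]$. The Bowen--Franks operator $\mathcal{BF}_Y=\mathcal{I}_Y-\mathcal{A}_Y$ is then multiplication by the element $\gamma_{Y/X}(1)=1-\sum_{e\in E_X}\pi\circ\alpha(e)\in\Z[G]$; this is the computation in the proof of Proposition \ref{gamma_L}, via \cite[Corollary 4.6]{LMV}. Hence $\BF{Y}\cong\Z[G]/\gamma_{Y/X}(1)\Z[G]$ as $\Z[G]$-modules. Tensoring with $\Z_p$ and then with $O_\psi$ over $\Z_p[G]$ is right exact, and cokernels of multiplication maps base change to cokernels. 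Since $G$ acts on $O_\psi$ via $\psi$, this gives
$$\BFopsi{Y}\cong O_\psi/\psi(\gamma_{Y/X}(1))O_\psi,$$
which is the first isomorphism.

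For the equality, I would compute $\psi(\gamma_{Y/X}(1))$. Proposition \ref{gamma_L} applied to the nontrivial character $\chi=\psi^{-1}$ gives $\psi(\gamma_{Y/X}(1))=g_{Y/X}(1,\psi)=m\cdot L_S(0,\psi^{-1})$. Next I would compare $L_S$ with the primitive $L$-function. Writing the Euler factors over $\ell\mid m$ for the primitive character $(\psi^{-1})^{\mathrm{prim}}$, we get
$$L_S(0,\psi^{-1})=L(0,\psi^{-1})\prod_{\ell\mid m}\bigl(1-\psi^{-1}(\ell)\bigr).$$
For $\ell\mid f_\psi$ the primitive character vanishes at $\ell$, so that factor is $1$. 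Combined with $L(0,\psi^{-1})=-B_{1,\psi^{-1}}$, this yields
$$\psi(\gamma_{Y/X}(1))=-m B_{1,\psi^{-1}}\prod_{\ell\mid m,\ \ell\nmid f_\psi}\bigl(1-\psi^{-1}(\ell)\bigr).$$
Note that $\psi^{-1}(\ell)=\overline{\psi(\ell)}$ is the primitive value, matching the convention of this subsection, and that $f_{\psi^{-1}}=f_\psi$.

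Finally, each factor $1-\psi^{-1}(\ell)$ with $\ell\nmid f_\psi$ lies in $O_\psi$. By Assumption \ref{NTZpsi} it is nonzero, since $\psi(\ell)\neq1$ implies $\psi^{-1}(\ell)\ne1$. As $O_\psi$ is a discrete valuation ring, the factor equals a unit times $\varpi^{b_\ell}$. Multiplying these together, the product is a unit times $\varpi^{b_\psi}$. The sign $-1$ is also a unit, so the ideal is $mB_{1,\psi^{-1}}\varpi^{b_\psi}O_\psi$.

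The only point needing care is the identification of $L_S$ with the primitive $L$-function times the missing Euler factors. The paper's $\psi(\ell)$ denotes the primitive value, while $L_S$ is defined by an Euler product over $p\notin S$ of $\chi(p)$, which agrees with the primitive character there. So the identity $L_S(s,\chi)=L(s,\chi^{\mathrm{prim}})\prod_{\ell\in S_{\mathrm{fin}}}(1-\chi^{\mathrm{prim}}(\ell)\ell^{-s})$ holds for $\mathrm{Re}(s)>1$ and extends by analytic continuation; evaluating at $s=0$ gives the displayed formula.

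One more remark: if $B_{1,\psi^{-1}}$ is not $p$-integral, the quotient should be read as $O_\psi/\psi(\gamma_{Y/X}(1))O_\psi$. This causes no problem, because $\psi(\gamma_{Y/X}(1))\in O_\psi$, so the product $mB_{1,\psi^{-1}}\varpi^{b_\psi}$ is integral.
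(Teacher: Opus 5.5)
Your proposal is correct and follows the paper's proof essentially step for step. You identify $\mathcal{BF}_Y$ with multiplication by $\gamma_{Y/X}(1)$ on $\Z[G]$, base change to $O_\psi$, and evaluate $\psi(\gamma_{Y/X}(1))=m L_S(0,\psi^{-1})$ via Proposition \ref{gamma_L} and the Euler-factor comparison with $-B_{1,\psi^{-1}}$; your extra remarks (nonvanishing of the factors under Assumption \ref{NTZpsi}, and integrality of $mB_{1,\psi^{-1}}\varpi^{b_\psi}$) just make explicit details the paper leaves implicit.
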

\begin{proof}
By the definition of $\BF{Y}$,
we have the exact sequence
$$\Z V_Y\xrightarrow{\mathcal{BF}_Y}\Z V_Y\longrightarrow\BF{Y}\longrightarrow0.$$
In the present setting,
since the base graph $X$ is a bouquet,
$\Z V_Y$ is isomorphic to $\Z[G]$.
By base change to $O_\psi$,
we obtain the following exact sequence:
$$O_\psi\xrightarrow{\mathcal{BF}^\psi}O_\psi\longrightarrow\BFopsi{Y}\longrightarrow0,$$
where the induced map $\mathcal{BF}^\psi$
is given by multiplication by $\psi(\gamma_{Y/X}(1))$.
By Proposition \ref{gamma_L}, we have
$$\psi(\gamma_{Y/X}(1))
=m\cdot L_S(0,\psi^{-1})
=-m B_{1,\psi^{-1}}\prod_{\ell\mid m,\ell\nmid f_\psi}(1-\psi^{-1}(\ell)).$$
This completes the proof.
\end{proof}

Next, we study $A_K^\psi$.
Thanks to the remarkable theorem \cite[Theorem 0.1]{KM11},
we can determine the order of $A_K^\psi$.
\begin{thm}[{\cite[Theorem 0.1]{KM11}}]\label{thm0.1}
Let $K/\Q$ be a finite imaginary abelian extension
with Galois group $G$.
We set $R_K=\Z[1/2][G]$.
For any $R_K$-module $M$,
we denote by $M^-$ its minus part.
Then, we have
$$\Fitt_{R_K^-}\big((\Cl{K}\otimes_\Z\Z[1/2])^-\big)=\varTheta^-_K,$$
where the left hand side is
the (initial) Fitting ideal of
the $R_K^-$-module $(\Cl{K}\otimes_\Z\Z[1/2])^-$ and 
$\varTheta_K$ is the $R_K$-ideal defined in \cite[\S 1.2]{KM11},
called the Stickelberger ideal for $K$
(see \eqref{varTheta} and Remark \ref{rmk_varTheta}).
\end{thm}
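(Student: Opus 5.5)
This statement is quoted from \cite{KM11}, and within the present paper it is only used as a black box. Still, here is the route I would take to prove it.

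First, I reduce to $p$-parts. Since $R_K=\Z[1/2][G]$, Fitting ideals commute with the flat base change $\Z[1/2]\to\Z_p$ for each odd prime $p$. An ideal of $R_K^-$ is determined by its completions at all odd $p$. So it suffices to prove, for each odd $p$,
$$\Fitt_{\Z_p[G]^-}(A_K^-)=\varTheta_K^-\otimes\Z_p,\qquad A_K=\Cl{K}\otimes_\Z\Z_p.$$
Both sides are compatible with changing $K$ inside its genus of abelian fields. Decomposing $\Z_p[G]=\bigoplus\Z_p[G']\otimes O_\chi$ along the characters $\chi$ of the prime-to-$p$ part $\Delta$ reduces everything to odd $\chi$ and to modules over the local ring $O_\chi[P]$, where $P$ is the $p$-Sylow subgroup of $G$.

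Second, I compute the Fitting ideal of the Pontryagin dual rather than of $A_K^-$ itself. Greither's theorem, a consequence of the equivariant Tamagawa number conjecture for $\Q$ (Burns--Greither, Flach), gives
$$\Fitt(A_K^{-,\vee})=\Fitt\big((A_K^-)^\vee\big)$$
as an explicit ideal. This ideal is built from Stickelberger elements $\theta_{K_n}$ of all intermediate fields, together with correction factors $1-\sigma_\ell^{-1}$ and norm elements of inertia groups $I_\ell$. The essential input here is the Iwasawa main conjecture (Mazur--Wiles), descended from the cyclotomic $\Z_p$-extension. The case $\chi=\omega$ is handled using the trivial zero of the relevant $p$-adic $L$-function.

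Third, and this is the main obstacle, I must pass from $\Fitt(A^\vee)$ to $\Fitt(A)$. For a non-cyclic $P$ these two ideals genuinely differ, which is the whole difficulty. My plan is to present $A_K^-$ and its dual through a common complex. On the dual side I would use the Ritter--Weiss or Burns--Greither representative; on the original side, class field theory and Kummer duality, via the $T$-modified ray class groups in a suitable extension. The discrepancy is then controlled by the modules $\bigoplus_{\ell}\Z_p[G/I_\ell]^-$ coming from ramified primes.

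I would then compute the Fitting ideals of these inertia-induced pieces explicitly, using Kurihara's shifted Fitting ideals in the spirit of $\Fitt^{[1]}$. The goal is to show that the resulting product of ideals is exactly the Sinnott-type Stickelberger ideal $\varTheta_K$. This ideal is generated by the elements
$$\nu_{K/K_f}\big(\theta_{K_f}\textstyle\prod_{\ell}(\text{Euler factors})\big)$$
over all subfields $K_f$. The hardest point is this combinatorial identification, namely matching the products of augmentation ideals and norms of the $I_\ell$ with the generators of $\varTheta_K$. I expect to attempt it first in the case where all $I_\ell$ are cyclic, and then to induct on the number of ramified primes.
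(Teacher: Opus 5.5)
The paper does not prove this statement. It is imported from \cite{KM11} and used only through Corollary~\ref{cor_Fitt} and \eqref{A_K^psi}, so there is no internal argument to compare yours with. Judged on its own, your text is a research plan, and its gap sits exactly where the theorem lives. Steps~1 and~2 are standard reductions and inputs, though the display in Step~2 is a tautology, so Greither's ideal is never actually written down.

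Everything that distinguishes $\Fitt(A_K^-)$ from $\Fitt((A_K^-)^\vee)$ is deferred to Step~3, which consists only of announcements (``my plan'', ``the discrepancy is then controlled by'', ``I expect''). You never write down the complex, the exact sequences linking $A_K^-$, the $T$-modified class group and $\bigoplus_\ell\Z_p[G/I_\ell]^-$, or the shifted Fitting ideals of these local pieces, so nothing can be checked. After applying $\psi$, this machinery would have to produce the local factors $(\#I_\ell,\,1-\psi^{-1}(\ell))$ of Proposition~\ref{psi_varTheta}, and you do not indicate how. Moreover, Greither's theorem as a statement cannot feed Step~3. For non-cyclic $p$-Sylow subgroups, $\Fitt(M)$ and $\Fitt(M^\vee)$ can differ and there is no general recipe passing from one to the other. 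You would therefore have to reopen the ETNC argument and work with the complex itself.

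Two further points show the plan is not yet aimed correctly.

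First, the proposed induction misplaces the difficulty. Over $\Q$ and for odd $p$, the $p$-part of every $I_\ell$ is automatically cyclic: $I_\ell$ is a quotient of $(\Z/\ell^{e}\Z)^\times$, and $I_2$ is a $2$-group. So your ``cyclic case'' is already the general case. Non-cyclicity of $P$ arises only when at least two ramified primes have $p\mid\#I_\ell$. Passing from one such prime to several is precisely where Fitting ideals fail to be multiplicative, and for this you offer no mechanism.

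Second, the target ideal is misdescribed. $\varTheta_K$ is not generated by elements $\nu_{K/K_f}(\theta_{K_f}\cdot\text{Euler factors})$ over all subfields. Instead:
\begin{itemize}
\item $\varTheta'_K$ uses only the fields $K_d$ with $d\in\mathcal{D}$, under \eqref{auxiliary assumption};
\item general $K$ is handled through $K'$ as in Remark~\ref{rmk_varTheta}, and on the class group side this needs $(A_{K'}^-)_{\Gal(K'/K)}\cong A_K^-$, which you leave unjustified;
\item $\varTheta_K=\varTheta'_K\cap R_K$.
\end{itemize}
The intersection is not cosmetic. When $\zeta_p\in K$, the $\theta_{K_d}$ are not $p$-integral in the $\omega$-component, so $\varTheta_K$ is not generated by the $\nu_{K/K_d}(\theta_{K_d})$ there. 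This is exactly why the paper needs Assumption~\ref{psineomega} in Lemma~\ref{varTheta_VS_varTheta'}.

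Relatedly, your claim that $\chi=\omega$ is handled by a trivial zero conflates two phenomena. Trivial zeros concern characters with $\chi(p)=1$. At $\omega$ the issue is $\mu_p\subset K$, that is, the denominators of the Stickelberger elements.
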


As a consequence of Theorem \ref{thm0.1},
we obtain the following.
\begin{cor}\label{cor_Fitt}
In the setting of Theorem \ref{thm0.1},
let $p$ be an odd rational prime and
let $\psi$ be an odd character of $G$.
Then we have
$$
\Fitt_{O_\psi}(A_K^\psi)=\psi(\varTheta_K\otimes\Z_p),
$$
where we also denote by $\psi$ the induced homomorphism
$\Z_p[G]\to O_\psi$, and set
$\varTheta_K\otimes\Z_p:=\varTheta_K\otimes_{\Z[1/2]}\Z_p\cong\varTheta_K\otimes_{R_K}\Z_p[G]$.
\end{cor}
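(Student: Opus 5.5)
The idea is to pass from the $R_K^-$-module statement of Theorem~\ref{thm0.1} to the $\psi$-component by base change, using that Fitting ideals commute with base change. Since $p$ is odd, $\Z[1/2]\to\Z_p$ is flat and $R_K\otimes_{\Z[1/2]}\Z_p=\Z_p[G]$. Also $A_K=\Cl{K}\otimes_\Z\Z_p=(\Cl{K}\otimes_\Z\Z[1/2])\otimes_{\Z[1/2]}\Z_p$.

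The plan has three steps.

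\emph{Step 1 (minus parts).} Let $e^-=\frac{1-j}{2}$ with $j$ complex conjugation. Because $2$ is invertible, $R_K=R_K^+\times R_K^-$, where $R_K^-=e^-R_K$ and $M^-=e^-M$. Since $\psi$ is odd, $\psi(j)=-1$, so $\psi(e^-)=1$ and $\psi(e^+)=0$. Hence the ring homomorphism $\psi:\Z_p[G]\to O_\psi$ factors through the projection $\Z_p[G]\to e^-\Z_p[G]=R_K^-\otimes\Z_p$. It follows that
$$A_K^\psi=A_K\otimes_{\Z_p[G]}O_\psi=(\Cl{K}\otimes\Z[1/2])^-\otimes_{R_K^-}O_\psi,$$
because tensoring with $O_\psi$ kills the plus part.

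\emph{Step 2 (base change of Fitting ideals).} For a finitely presented module $M$ over a commutative ring $R$ and a ring map $R\to R'$, one has $\Fitt_{R'}(M\otimes_RR')=\Fitt_R(M)R'$. Apply this to $R=R_K^-$, $R'=O_\psi$ and $M=(\Cl{K}\otimes\Z[1/2])^-$, which is finite and hence finitely presented. Together with Theorem~\ref{thm0.1} this gives
$$\Fitt_{O_\psi}(A_K^\psi)=\psi(\varTheta_K^-)O_\psi.$$

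\emph{Step 3 (identify the image).} We need $\psi(\varTheta_K^-)O_\psi=\psi(\varTheta_K\otimes\Z_p)$. Since $\psi(e^-)=1$, we have $\psi(e^-x)=\psi(x)$ for every $x\in\varTheta_K$. Therefore $\psi(\varTheta_K^-)=\psi(\varTheta_K)$, where $\varTheta_K^-$ means $e^-\varTheta_K$, the image in $R_K^-$.

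Next, $\varTheta_K\otimes\Z_p$ is the $\Z_p[G]$-ideal generated by $\varTheta_K$. By flatness it is isomorphic to $\varTheta_K\otimes_{R_K}\Z_p[G]$ and embeds in $\Z_p[G]$. Its image under $\psi$ is therefore the $O_\psi$-submodule generated by $\psi(\varTheta_K)$, using that $\psi(\Z_p[G])=O_\psi$. This equals $\psi(\varTheta_K^-)O_\psi$, as required.

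The only point needing care, and the likely main obstacle, is the bookkeeping in Step 3. One must make sure that ``$\varTheta_K^-$'' in Theorem~\ref{thm0.1} means $e^-\varTheta_K$, i.e.\ the image in $R_K^-$, and not $\varTheta_K\cap R_K^-$. For an ideal of $R_K=R_K^+\times R_K^-$ these coincide, since $e^-$ is idempotent and lies in $R_K$. Either way, the $\psi$-images agree.
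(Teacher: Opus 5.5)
Your proposal is correct and follows the same route as the paper. You factor $\psi$ through the minus part, identify $A_K^\psi$ with $(\Cl{K}\otimes\Z[1/2])^-\otimes_{R_K^-}O_\psi$, and apply base change of Fitting ideals to Theorem~\ref{thm0.1}. Your Step 3 uses $\psi(e^-)=1$ and the surjectivity of $\Z_p[G]\to O_\psi$ to spell out the final equality $\varTheta_K^-O_\psi=\psi(\varTheta_K\otimes\Z_p)$, which the paper states without comment.
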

\begin{proof}
Since $p$ is odd and $\psi$ is an odd character,
$\psi:\Z_p[G]\to O_\psi$
factors through the minus part.
Thus, we have ring homomorphisms
$$
R_K^-\to\Z_p[G]^-\to O_\psi.
$$
By the base change property of Fitting ideals,
we have
$$
\Fitt_{O_\psi}((\Cl{K}\otimes_\Z\Z[1/2])^-
\otimes_{R_K^-}O_\psi)
=\Fitt_{R_K^-}\big((\Cl{K}\otimes_\Z\Z[1/2])^-\big)O_\psi.
$$
Moreover, there is a natural isomorphism
$$
(\Cl{K}\otimes_\Z\Z[1/2])^-
\otimes_{R_K^-}O_\psi
\cong
(\Cl{K}\otimes_\Z\Z_p)
\otimes_{\Z_p[G]}O_\psi
=A_K^\psi.
$$
Therefore, Theorem \ref{thm0.1} gives
$$
\Fitt_{O_\psi}(A_K^\psi)
=\Fitt_{R_K^-}\big((\Cl{K}\otimes_\Z\Z[1/2])^-\big)O_\psi
=\varTheta_K^- O_\psi
=\psi(\varTheta_K\otimes \Z_p).
$$
This completes the proof.
\end{proof}

Since $O_\psi$ is a discrete valuation ring,
by Corollary \ref{cor_Fitt},
we have
\begin{equation}\label{A_K^psi}
\# A_K^\psi = \#\left(\frac{O_\psi}{\psi(\varTheta_K\otimes\Z_p)}\right).
\end{equation}

We now investigate $\psi(\varTheta_K\otimes\Z_p)$.
To this end, we recall the definition of $\varTheta_K$ in \cite{KM11}.
For a finite abelian extension $K'/\Q$ containing $K$,
we define the $\Z_p$-algebra homomorphism
$$c_{K'/K}:\Z_p[\Gal(K'/\Q)]\to\Z_p[G]$$
induced by the restriction $\sigma\mapsto\sigma|_K$
for any $\sigma\in\Gal(K'/\Q)$.
The $\Z_p$-module homomorphism 
$$\nu_{K'/K}:\Z_p[G]\to\Z_p[\Gal(K'/\Q)]$$
is defined by
$\sigma\mapsto\sum_{c_{K'/K}(\tau)=\sigma}\tau$
for any $\sigma\in G$.

Suppose that the conductor $m$ of $K$
has the following prime factorization:
$$m=\ell_1^{e_1}\cdots\ell_n^{e_n}\quad(e_1,\ldots,e_n>0).$$
For a rational prime $\ell\mid m$,
the inertia subgroup of $\ell$ in $G$ is denoted by $I_\ell$.
We make the following auxiliary assumption on $K$:
\begin{equation}\label{auxiliary assumption}
G=\Gal(K/\Q)=I_{\ell_1}\times\cdots\times I_{\ell_n}.
\end{equation}
This assumption is needed only for the computation
and can ultimately be removed (see Remark \ref{rmk_varTheta2} and Proposition \ref{psi_varTheta}).
We set
$$\mathcal{D}:=\{d=\ell_1^{e_1'}\cdots\ell_n^{e_n'}\mid e_i'= 0\text{ or }e_i,\ i=1,\ldots,n\}.$$
For any $d\in\mathcal{D}$,
we define $K_d$ to be the maximal subfield of $K$
which is unramified outside the primes dividing $d$ over $\Q$,
so that
$$\Gal(K_d/\Q)=\prod_{\ell\mid d}I_\ell.$$
For $d\in\mathcal{D}$, 
the Stickelberger element of $K_d$ is defined by
$$
  \theta_{K_d}=
    \begin{cases}
        \pi_d(\theta_d) & \text{if }d>1,\\
        -1/2 & \text{if }d=1,
    \end{cases}
$$
where, for $d>1$,
$\theta_d$ denotes the Stickelberger element
of $\Q(\zeta_d)$
defined in $\S$\ref{SE}
and $\pi_d$ denotes the natural projection
from $\Gal(\Q(\zeta_d)/\Q)$ to $\Gal(K_d/\Q)$.
Using the partial zeta function $\zeta(s,\sigma)$,
$\theta_{K_d}$ can also be written as
$$\theta_{K_d}=\sum_{\sigma\in\Gal(K_d/\Q)}\zeta(0,\sigma)\sigma^{-1}
\in\Q[\Gal(K_d/\Q)].$$

Let $\varTheta_K'$ be the $R_K$-submodule of $\Q[G]$
generated by all $\nu_{K/K_d}(\theta_{K_d})$ for $d\in\mathcal{D}$,
that is
\begin{equation}\label{varTheta'}
\varTheta_K'=\langle\nu_{K/K_d}(\theta_{K_d})\mid d\in\mathcal{D}\rangle
\subset \Q[G].
\end{equation}
Then the Stickelberger ideal for $K$ is defined by
\begin{equation}\label{varTheta}
\varTheta_K:=\varTheta_K' \cap R_K.
\end{equation}
\begin{rmk}\label{rmk_varTheta}
For $K$ not necessarily satisfying \eqref{auxiliary assumption},
the Stickelberger ideal $\varTheta_K$ is also defined in \cite{KM11}.
For such $K$, by \cite[Lemma 2.3]{K03}, 
there exists a unique abelian extension $K'/\Q$ containing $K$
such that $K'/K$ is unramified at all finite primes
and $K'/\Q$ satisfies \eqref{auxiliary assumption}.
Then, we define the Stickelberger ideal $\varTheta_K$ to be
$$\varTheta_K:=c_{K'/K}(\varTheta_{K'})\subset R_K.$$
\end{rmk}
\begin{rmk}\label{rmk_varTheta2}
Let $K$ and $K'$ be as in Remark \ref{rmk_varTheta}.
Set $G':=\Gal(K'/\Q)$. 
For an odd character $\psi\in\CG{G}$, we set
$\psi':=\psi\circ c_{K'/K}:\Z_p[G']\to O_\psi$.
Then, we have
$$\psi(\varTheta_K\otimes\Z_p)=\psi'(\varTheta_{K'}\otimes\Z_p).$$
We note that $K'$ also has conductor $m$.
For each $\ell\mid m$,
we denote by $I'_\ell$
the inertia subgroup of $\ell$ in $G'$.
Since $K'/K$ is unramified,
we have $I_\ell\cong I'_\ell$.
Moreover, since $c_{K'/K}$ is surjective,
we have $O_\psi=O_{\psi'}$.
By the definition of $\psi'$,
the primitive characters associated with $\psi$ and $\psi'$ coincide,
and hence
$B_{1,\psi^{-1}}=B_{1,\psi'^{-1}}$.

Let $\Delta'$ be the maximal subgroup of $G'$
whose order is prime to $p$.
When $\Q(\zeta_p)\subset K'$,
we denote by $\omega'$ the Teichm\"uller character on $\Delta'$.
We also note that Assumption \ref{psineomega} holds for $\psi$
if and only if it holds for $\psi'$.
Indeed, the restriction map induces a surjection
$\Delta'\to\Delta$, and
$$\psi'|_{\Delta'}
=(\psi|_\Delta)\circ(c_{K'/K}|_{\Delta'}).$$
If $\Q(\zeta_p)\subset K$, then
$\omega'=\omega\circ(c_{K'/K}|_{\Delta'})$
so the assertion follows
from the surjectivity of $\Delta'\to\Delta$.
If $\Q(\zeta_p)\not\subset K'$, then $\Q(\zeta_p)\not\subset K$,
and both assumptions hold automatically.
It remains to consider the case where
$\Q(\zeta_p)\not\subset K$ but $\Q(\zeta_p)\subset K'$.
Suppose that $\psi'|_{\Delta'}=\omega'$.
Since $\psi'|_{Gal(K'/K)}=1$,
we have
$$\omega'|_{Gal(K'/K)\cap\Delta'}=\psi'|_{Gal(K'/K)\cap\Delta'}=1.$$
Moreover, the $p$-Sylow subgroup of $G'$
acts trivially on $\zeta_p$.
It follows that every element of $\Gal(K'/K)$
fixes $\zeta_p$.
Thus $\zeta_p\in K$,
contrary to our assumption.
\end{rmk}

Write $G=\Delta\times\Gamma$, where $\Delta$ has order prime to $p$
and $\Gamma$ is the $p$-Sylow subgroup of $G$.
Under this decomposition, every $\psi\in\CG{G}$
can be written uniquely as
$\psi=\chi\varphi$,
where $\chi\in\CG{\Delta}$ and $\varphi\in\CG{\Gamma}$.
\begin{lem}\label{varTheta_VS_varTheta'}
Suppose that Assumption \ref{psineomega} and \eqref{auxiliary assumption} hold.
Then, we have
$$\psi(\varTheta_K\otimes\Z_p)=\psi(\varTheta_K'\otimes\Z_p).$$
\end{lem}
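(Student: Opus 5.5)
We want $\psi(\varTheta_K\otimes\Z_p)=\psi(\varTheta_K'\otimes\Z_p)$, where $\varTheta_K=\varTheta_K'\cap R_K$. The inclusion $\subseteq$ is immediate from $\varTheta_K\subset\varTheta_K'$ after tensoring with $\Z_p$. Note that $\varTheta_K'\otimes\Z_p$ is the $\Z_p[G]$-submodule of $\Q_p[G]$ generated by the $\nu_{K/K_d}(\theta_{K_d})$, and that $\psi$ extends to a map $\Q_p[G]\to\Q_p(\psi)$. The real task is the reverse inclusion: every element of $\varTheta_K'\otimes\Z_p$ should, after applying $\psi$, be attained by an element of $\varTheta_K'\otimes\Z_p$ whose coefficients lie in $\Z_p$.

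The plan is to localize at the $\chi$-component. Write $\psi=\chi\varphi$ with $\chi\in\CG{\Delta}$. Since $\#\Delta$ is prime to $p$, we have
$$\Z_p[G]=\bigoplus_{[\chi]}\Z_p[G]e_\chi,$$
where $e_\chi$ runs over idempotents for $\mathrm{Gal}(\overline{\Q}_p/\Q_p)$-orbits of characters of $\Delta$. The map $\psi$ factors through multiplication by $e_\chi$. It therefore suffices to show
$$e_\chi(\varTheta_K'\otimes\Z_p)\subset\Z_p[G]e_\chi.$$
Granting this, for $x\in\varTheta_K'\otimes\Z_p$ the element $e_\chi x$ lies in $\varTheta_K'\otimes\Z_p$ (the module is a $\Z_p[G]$-module) and in $\Z_p[G]$, hence in $(\varTheta_K'\cap R_K)\otimes\Z_p=\varTheta_K\otimes\Z_p$. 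This uses that intersection commutes with the flat base change $\Z[1/2]\to\Z_p$. Moreover $\psi(e_\chi x)=\psi(x)$.

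To prove integrality, use the classical fact that the denominators of Stickelberger elements are controlled by annihilation by $(1-c\,\sigma_c)$. For each $d$ and each integer $c$ prime to $d$, the element $(c-\sigma_c)\theta_d$ lies in $\Z[G_d]$. Project this to $K_d$ and apply $\nu_{K/K_d}$; this map is $\Z_p[\mathrm{Gal}(K_d/\Q)]$-linear with respect to $c_{K/K_d}$. We obtain
$$(c-\tilde\sigma_c)\,\nu_{K/K_d}(\theta_{K_d})\in\Z_p[G],$$
where $\tilde\sigma_c$ is any lift of $\sigma_c$ in $G$. The cases $d=1$, where $\theta_{K_1}=-1/2$, and $K_d$ real, where $\theta_{K_d}$ is a multiple of the norm, are harmless: $2$ is invertible, and the projection kills these terms up to integral elements on the odd component. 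The remaining $p$-power denominators live in $\Z_p$-multiples where $c-\sigma_c$ fails to be a unit.

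On the $\chi$-component, the element $c-\tilde\sigma_c$ becomes a unit of $\Z_p[G]e_\chi$ exactly when $\chi(\tilde\sigma_c)\not\equiv c$ modulo the maximal ideal, i.e. when $\chi\ne\omega$ on the relevant element. This is where Assumption~\ref{psineomega} enters. If $\Q(\zeta_p)\not\subset K$, or if $\chi\neq\omega$ when $\Q(\zeta_p)\subset K$, I choose $c$ with $\sigma_c|_\Delta$ a suitable generator so that $\chi(\sigma_c)\not\equiv c\bmod p$. The condition \eqref{auxiliary assumption} is used to choose $c$ compatibly for every $d\in\mathcal D$: take $c\equiv1$ modulo primes outside a chosen one, using the product decomposition of $G$ into inertia groups. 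Then $c-\tilde\sigma_c$ is invertible on $e_\chi$, which gives $e_\chi\nu_{K/K_d}(\theta_{K_d})\in\Z_p[G]e_\chi$ for every $d$, and hence the claimed integrality.

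The main obstacle is this last choice of $c$ uniformly over $d\in\mathcal{D}$. One must check that $\chi(\tilde\sigma_c)-c$ is a unit in $O_\chi$ using only $\chi\ne\omega$ (and not $\chi\ne\omega$ restricted to each $\mathrm{Gal}(K_d/\Q)$). If a single $c$ does not work, I would instead use the ideal generated by all $c-\tilde\sigma_c$ as $c$ varies; on the $\chi$-part this ideal is the unit ideal precisely when $\chi\neq\omega$, by a standard argument going back to Iwasawa and Sinnott.
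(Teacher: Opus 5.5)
Your formal reductions are correct and follow the same architecture as the paper, which obtains them by citation. The equality $\varTheta_K\otimes\Z_p=(\varTheta_K'\otimes\Z_p)\cap\Z_p[G]$ holds by flatness. The idempotent argument shows that if $e_\chi(\varTheta_K'\otimes\Z_p)\subset\Z_p[G]e_\chi$, then $e_\chi x\in\varTheta_K\otimes\Z_p$ with $\psi(e_\chi x)=\psi(x)$; this is a self-contained substitute for \cite[Lemma 1.3]{KM11}. Your $(c-\sigma_c)$-annihilator argument is meant to replace the discussion after that lemma, which the paper cites for the case $\chi\ne\omega$.

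The gap is that the one non-formal step is never established: the existence of an integer $c$ with $c-\tilde\sigma_c$ a unit on the $\chi$-part. You flag it as the main obstacle, and your recipe for $c$ is not pinned down. It says $\sigma_c|_\Delta$ should be ``a suitable generator'', although $\Delta$ need not be cyclic, and $c\equiv 1$ ``modulo primes outside a chosen one''. With the wrong choice of prime this fails. For example, let $K=\Q(\zeta_p)F$ with $F$ real quadratic of prime conductor $\ell\ne p$, and let $\chi=\omega\chi'$ with $\chi'$ the quadratic character of $F$. Then every $c$ prime to $p\ell$ with $c\equiv1\pmod{\ell}$ satisfies $\chi(\sigma_c)=\omega(\sigma_c)\equiv c\pmod p$. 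So none of these $c-\sigma_c$ is a unit on the $\chi$-part, even though $\chi\ne\omega$. Your fallback to ``a standard argument going back to Iwasawa and Sinnott'' names a source but does not give the argument.

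The step closes quickly, and the uniformity in $d$ you worry about is not an issue. Take $c$ prime to $m$, not merely to $d$. Then $c$ is prime to every $d\in\mathcal D$, and $\sigma_c|_K$ lifts $\sigma_c|_{K_d}$ for every $d$. Hence $(c-\sigma_c|_K)\,\nu_{K/K_d}(\theta_{K_d})\in\Z_p[G]$ for all $d$ at once. (With the paper's normalization, $(c-\sigma_c)\theta_d$ lies in $\Z[1/2][G_d]$ rather than $\Z[G_d]$ when $c$ is even; this is harmless since $p$ is odd.)

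Write $\sigma_c|_K=\delta_c\gamma_c$ with $\delta_c\in\Delta$ and $\gamma_c\in\Gamma$. In the local ring $\Z_p[G]e_\chi\cong O_\chi[\Gamma]$, the element $c-\sigma_c|_K$ is a unit if and only if $c\not\equiv\chi(\delta_c)$ modulo the maximal ideal.
\begin{itemize}
\item If $p\nmid m$, any $c$ divisible by $p$ works.
\item If $p\mid m$ and no $c$ worked, then $\chi(\delta_c)\equiv c\equiv\omega_m(\sigma_c)$ for all $c$ prime to $m$, where $\omega_m$ is the Teichm\"uller character of $G_m$. Both sides are roots of unity of order prime to $p$ lying in $O_\chi$, so they are equal. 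Thus $\omega_m$ factors through $G_m\to G$. This forces $\Q(\zeta_p)\subset K$ and $\chi=\omega$, contradicting Assumption~\ref{psineomega}.
\end{itemize}
Only $\chi\ne\omega$ on $\Delta$ is used. The condition \eqref{auxiliary assumption} plays no role beyond making $\varTheta_K=\varTheta_K'\cap R_K$ the definition. With this inserted, your proof becomes a complete, self-contained version of the paper's citation-based proof. It also treats the cases $\Q(\zeta_p)\not\subset K$ and $\Q(\zeta_p)\subset K$ uniformly.
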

\begin{proof}
Set $\chi:=\psi|_\Delta$ and
$O_\chi:=\Z_p[\chi(\Delta)]$.
By \cite[Lemma 1.3]{KM11},
we have
$$
(\varTheta_K\otimes\Z_p)\otimes_{Z_p[G]} O_\chi[\Gamma]
=
\left((\varTheta'_K\otimes\Z_p)\otimes_{Z_p[G]} O_\chi[\Gamma]\right)
\cap O_\chi[\Gamma].
$$
If $\Q(\zeta_p)\not\subset K$, then
$\theta_{K_d}\in\Z_p[\Gal(K_d/\Q)]$
for every $d\in\mathcal{D}$.
Since $\varTheta_K'$
is generated by the elements
$\nu_{K/K_d}(\theta_{K_d})$,
it follows that
$\varTheta_K'\otimes\Z_p\subset\Z_p[G]$.
Hence, we have
\begin{equation}\label{Lem1.3}
(\varTheta_K\otimes\Z_p)\otimes_{Z_p[G]} O_\chi[\Gamma]
=(\varTheta'_K\otimes\Z_p)\otimes_{Z_p[G]} O_\chi[\Gamma].
\end{equation}
If $\Q(\zeta_p)\subset K$, then
Assumption \ref{psineomega} gives $\chi\ne\omega$.
Thus, \eqref{Lem1.3} also holds
by the discussion immediately following \cite[Lemma 1.3]{KM11}.

In both cases, taking the images of both sides under
$\varphi:O_\chi[\Gamma]\to O_\psi$,
we obtain the result.
\end{proof}

By \eqref{varTheta'} and Lemma \ref{varTheta_VS_varTheta'},
we are essentially reduced to determining $\psi(\nu_{K/K_d}(\theta_{K_d}))$.
\begin{prop}\label{psi_varTheta}
Suppose that Assumption \ref{psineomega} holds.
For each rational prime $\ell$ such that $\ell\mid m$ and $\ell\nmid f_\psi$,
we set
$$a_\ell:=\min\{v_\psi(\# I_\ell),b_\ell\}
\quad\text{and}\quad
a_\psi:=\sum_{\ell\mid m, \ell\nmid f_\psi}a_\ell,$$
where $b_\ell=v_\psi(1-\psi^{-1}(\ell))$ is as defined in Proposition \ref{BFo}.
Then, we have
$$\psi(\varTheta_K\otimes\Z_p)=B_{1,\psi^{-1}}\varpi^{a_\psi}O_\psi.$$
\end{prop}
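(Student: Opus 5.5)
First reduce to the case where \eqref{auxiliary assumption} holds. By Remark \ref{rmk_varTheta2}, passing from $K$ to $K'$ changes none of the relevant quantities: $\psi(\varTheta_K\otimes\Z_p)=\psi'(\varTheta_{K'}\otimes\Z_p)$, the two rings agree ($O_\psi=O_{\psi'}$), $B_{1,\psi^{-1}}=B_{1,\psi'^{-1}}$, the inertia groups satisfy $I_\ell\cong I'_\ell$ (so $a_\ell$ is unchanged), and Assumption \ref{psineomega} transfers. So assume \eqref{auxiliary assumption}. Lemma \ref{varTheta_VS_varTheta'} then gives $\psi(\varTheta_K\otimes\Z_p)=\psi(\varTheta'_K\otimes\Z_p)$. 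By \eqref{varTheta'} this is the $O_\psi$-ideal generated by the elements $\psi(\nu_{K/K_d}(\theta_{K_d}))$ for $d\in\mathcal{D}$. It therefore suffices to compute each generator and take the one of minimal valuation.

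Next compute $\psi(\nu_{K/K_d}(\theta_{K_d}))$. Write $G=\Gal(K_d/\Q)\times H_d$ with $H_d=\prod_{\ell\nmid d}I_\ell=\Gal(K/K_d)$. Then $\nu_{K/K_d}(x)=x\cdot N_{H_d}$ for $x\in\Z_p[\Gal(K_d/\Q)]$, viewed inside $\Z_p[G]$ via this splitting. Hence
\[\psi(\nu_{K/K_d}(\theta_{K_d}))=\psi(N_{H_d})\,\psi|_{K_d}(\theta_{K_d}).\]
The factor $\psi(N_{H_d})$ vanishes unless $\psi$ is trivial on every $I_\ell$ with $\ell\nmid d$, and then it equals $\prod_{\ell\nmid d}\#I_\ell$. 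Triviality of $\psi$ on $I_\ell$ means exactly $\ell\nmid f_\psi$. So the nonzero generators are those with $d$ divisible by every prime dividing $f_\psi$; here $\psi$ factors through $K_d$, and $K_d$ has conductor $d$. For $d>1$, Proposition \ref{chi_theta2} (applied to $\Q(\zeta_d)$ with its own set $S_d$ of primes dividing $d$), together with the projection $\pi_d$, gives
\[\psi(\theta_{K_d})=L_{S_d}(0,\psi^{-1})=-B_{1,\psi^{-1}}\prod_{\ell\mid d,\ \ell\nmid f_\psi}(1-\psi^{-1}(\ell)).\]
Here we read $\psi(\theta)$ as $\chi^{-1}(\theta)$ with $\chi=\psi^{-1}$, which is consistent with \eqref{chi_theta1}. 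The case $d=1$ is possible only when $f_\psi=1$, which cannot happen for odd $\psi$, so it drops out.

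The generator attached to $d$ therefore has valuation
\[v_\psi(B_{1,\psi^{-1}})+\sum_{\ell\nmid d}v_\psi(\#I_\ell)+\sum_{\ell\mid d,\ \ell\nmid f_\psi}b_\ell .\]
For each $\ell\mid m$ with $\ell\nmid f_\psi$ we may choose independently whether $\ell\mid d$, paying either $v_\psi(\#I_\ell)$ or $b_\ell$. The minimum over $d$ is therefore $v_\psi(B_{1,\psi^{-1}})+\sum a_\ell$. Since $O_\psi$ is a DVR, the ideal is $B_{1,\psi^{-1}}\varpi^{a_\psi}O_\psi$.

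I expect the main obstacle to be the bookkeeping of the embedding $\nu_{K/K_d}$ and the character conventions ($\psi$ versus $\psi^{-1}$, and $\sigma^{-1}$ in $\theta$). A second delicate point is confirming that $\psi$ restricted to $K_d$ has the same primitive character, so that the same Bernoulli number appears for every $d$. Under \eqref{auxiliary assumption}, the product decomposition makes $\nu$ a multiplication by a norm, which should resolve the first issue.
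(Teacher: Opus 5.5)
Your proposal is correct and follows the paper's proof essentially step for step. You reduce to \eqref{auxiliary assumption} via Remark~\ref{rmk_varTheta2} and Lemma~\ref{varTheta_VS_varTheta'}, show that $\psi(\nu_{K/K_d}(\theta_{K_d}))$ vanishes unless $f_\psi\mid d$, and otherwise evaluate it as $\prod_{\ell\nmid d}\#I_\ell\cdot L_{S_d}(0,\psi^{-1})$ using Proposition~\ref{chi_theta2}. Your final minimization over the independent choices of whether $\ell\mid d$ is just a valuation-theoretic restatement of the paper's factorization $\psi(\varTheta_K'\otimes\Z_p)=B_{1,\psi^{-1}}\prod_{\ell}(\#I_\ell,\,1-\psi^{-1}(\ell))$, in which each factor equals $\varpi^{a_\ell}O_\psi$.
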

\begin{proof}
By Remark \ref{rmk_varTheta2} and Lemma \ref{varTheta_VS_varTheta'},
it suffices to show the result
under the assumption \eqref{auxiliary assumption}.
We can compute $\psi(\nu_{K/K_d}(\theta_{K_d}))$ as follows:
\begin{align*}
\psi(\nu_{K/K_d}(\theta_{K_d}))
  &= \sum_{\sigma\in\Gal(K_d/\Q)}\zeta(0,\sigma)\psi(\nu_{K/K_d}(\sigma^{-1}))\\
  &= \sum_{\sigma\in\Gal(K_d/\Q)}\zeta(0,\sigma)\psi(\tilde{\sigma})^{-1}
  \sum_{\tau\in\Gal(K/K_d)}\psi(\tau)^{-1},
\end{align*}
where $\tilde{\sigma}\in G$ denotes a lift of $\sigma\in\Gal(K_d/\Q)$.
Hence, if $\psi|_{\Gal(K/K_d)}\ne1$, then $\psi(\nu_{K/K_d}(\theta_{K_d}))=0$.
Suppose that $\psi|_{\Gal(K/K_d)}=1$, that is, $f_\psi\mid d$.
Let $\psi_{K_d}$ be the unique character of $\Gal(K_d/\Q)$
satisfying $\psi=\psi_{K_d}\circ c_{K/K_d}$.
Since $\psi(\tilde{\sigma})=\psi_{K_d}(\sigma)$,
we have
$$\psi(\nu_{K/K_d}(\theta_{K_d}))=[K:K_d]\psi_{K_d}(\theta_{K_d}).$$
By \eqref{auxiliary assumption}, we have
$$[K:K_d]=\prod_{\ell\mid m, \ell\nmid d}\# I_\ell.$$
By Proposition \ref{chi_theta2}, we have
$$\psi_{K_d}(\theta_{K_d})
=L_{S_d}(0,\psi_{K_d}^{-1})
=L_{S_d}(0,\psi^{-1})
=-B_{1,\psi^{-1}}\prod_{\ell\mid d, \ell\nmid f_\psi}(1-\psi^{-1}(\ell)),$$
where $S_d:=\{\ell\mid d\}\cup\{\infty\}$.
Therefore, in this case, we have
\begin{equation}\label{psi_nu_theta}
\psi(\nu_{K/K_d}(\theta_{K_d}))
=-B_{1,\psi^{-1}}
\prod_{\ell\mid m, \ell\nmid d}\# I_\ell
\prod_{\ell\mid d, \ell\nmid f_\psi}(1-\psi^{-1}(\ell)).
\end{equation}
Hence, by \eqref{varTheta'} and \eqref{psi_nu_theta}, we have
\begin{align*}
\psi(\varTheta_K'\otimes\Z_p)
&=\left\langle\left. B_{1,\psi^{-1}}
\prod_{\ell\mid m, \ell\nmid d}\# I_\ell
\prod_{\ell\mid d, \ell\nmid f_\psi}(1-\psi^{-1}(\ell))
\ \right|\ d\in\mathcal{D},\ f_\psi\mid d\right\rangle\\
&=B_{1,\psi^{-1}}\prod_{\ell\mid m, \ell\nmid f_\psi}\left(\# I_\ell,\ 1-\psi^{-1}(\ell)\right).
\end{align*}
Since
$\left(\# I_\ell,\ 1-\psi^{-1}(\ell)\right)
=\varpi^{a_\ell}O_\psi$,
the result follows.
\end{proof}

\begin{lem}\label{lem_MTH2}
We assume Assumption \ref{NTZpsi}.
We have
$$c_\psi=b_\psi-a_\psi,$$
where $c_\psi$, $b_\psi$, and $a_\psi$
are as defined in Theorem \ref{__MTH2}, Propositions \ref{BFo} and \ref{psi_varTheta}
respectively.
\end{lem}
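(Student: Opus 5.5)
Since $b_\psi-a_\psi=\sum_{\ell\mid m,\ \ell\nmid f_\psi}(b_\ell-a_\ell)$ and $b_\ell-a_\ell=b_\ell-\min\{v_\psi(\#I_\ell),b_\ell\}=\max\{0,\,b_\ell-v_\psi(\#I_\ell)\}$, the lemma reduces to a prime-by-prime computation. We will show, for each $\ell\mid m$ with $\ell\nmid f_\psi$, that $b_\ell-a_\ell=b_\ell$ if $\ell\in T_\psi$, and $b_\ell-a_\ell=0$ otherwise. Primes $\ell\mid f_\psi$ contribute to neither side, because $T_\psi$ requires $\ell\nmid f_\psi$.

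Fix $\ell\mid m$ with $\ell\nmid f_\psi$. Since $\psi^{\mathrm{prim}}$ is unramified at $\ell$, the value $\zeta:=\psi(\ell)$ is a root of unity. Assumption \ref{NTZpsi} gives $\zeta\neq1$, so $\psi^{-1}(\ell)=\zeta^{-1}$ is a nontrivial root of unity. We then split into two cases.

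\emph{Case 1: the order of $\zeta$ is not a power of $p$.} Then $1-\zeta^{-1}$ is a unit in $O_\psi$, so $b_\ell=0$. Hence $a_\ell=0$, and the contribution is $0$. This agrees with $\ell\notin T_\psi$.

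\emph{Case 2: the order of $\zeta$ is $p^k$ with $k\ge1$.} Then $b_\ell=v_\psi(1-\zeta^{-1})>0$.
\begin{itemize}
\item If $p\nmid\#I_\ell$, then $v_\psi(\#I_\ell)=0$, so $a_\ell=0$ and $b_\ell-a_\ell=b_\ell$. Here $\ell\in T_\psi$, as required.
\item If $p\mid\#I_\ell$, then $v_\psi(\#I_\ell)\ge v_\psi(p)=e$, where $e$ is the ramification index of $O_\psi/\Z_p$. Moreover $v_\psi(1-\zeta^{-1})=e/(p^{k-1}(p-1))\le e$. Thus $a_\ell=b_\ell$, the contribution is $0$, and indeed $\ell\notin T_\psi$.
\end{itemize}

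Summing over $\ell$ then gives $b_\psi-a_\psi=\sum_{\ell\in T_\psi}b_\ell=c_\psi$.

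The only step needing care is the valuation inequality $v_\psi(1-\zeta^{-1})\le v_\psi(p)$. It follows from the standard fact that $1-\zeta_{p^k}$ has valuation $1/\varphi(p^k)\le1$ in the normalization $v(p)=1$. The rest is bookkeeping with the definitions of $a_\ell$, $b_\ell$, and $T_\psi$.
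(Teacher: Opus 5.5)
Your proof is correct and follows the paper's argument essentially step for step. Like the paper, you reduce to $b_\ell-a_\ell=\max\{0,\,b_\ell-v_\psi(\#I_\ell)\}$, split according to whether the order of $\psi^{\pm1}(\ell)$ is a power of $p$, and use $b_\ell=v_\psi(p)/(p^{k-1}(p-1))$ to handle the subcase $p\mid\#I_\ell$; the paper states this bound as strict, but your non-strict version already gives $a_\ell=b_\ell$.
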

\begin{proof}
Recall that,
for each rational prime $\ell$
such that $\ell\mid m$ and $\ell\nmid f_\psi$,
we defined
$$
b_\ell:=v_\psi(1-\psi^{-1}(\ell))
\quad\text{and}\quad
a_\ell:=\min\{v_\psi(\# I_\ell),b_\ell\}.$$
We have
$$b_\ell-a_\ell
=\max\{b_\ell - v_\psi(\# I_\ell),\ 0\}.$$
We now investigate $b_\ell - a_\ell$.
Note that
$1-\psi^{-1}(\ell)\ne0$
by Assumption \ref{NTZpsi}.

First, if the order of $\psi^{-1}(\ell)$ is not a power of $p$,
then since $1-\psi^{-1}(\ell)$ is a unit in $O_\psi$,
we have $b_\ell=0$.
Therefore, $b_\ell - v_\psi(\# I_\ell)\leq0$ so $b_\ell-a_\ell=0$.

Next, we assume that
the order of $\psi^{-1}(\ell)$ is $p^r$ for some $r>0$.
In this case, we have
\begin{equation}\label{v}
b_\ell=\frac{v_\psi(p)}{p^{r-1}(p-1)}<v_\psi(p).
\end{equation}
If $p\mid\# I_\ell$,
then $v_\psi(\# I_\ell)\geq v_\psi(p)$.
Combining this with \eqref{v},
we have $b_\ell<v_\psi(\# I_\ell)$.
Therefore, $b_\ell-a_\ell=0$.
On the other hand,
if $p\nmid \# I_\ell$, then $v_\psi(\# I_\ell)=0$.
Therefore, $a_\ell=0$, so $b_\ell-a_\ell=b_\ell$.
The result follows from the definition of $c_\psi$.
\end{proof}

\begin{proof}[Proof of Theorem \ref{__MTH2}]
By Proposition \ref{BFo}, we have
\begin{equation}\label{MTH2_BFo}
\#\BFopsi{Y}=|m B_{1,\psi^{-1}}\varpi^{b_\psi}|_p^{-\rank_{\Z_p}O_\psi}.
\end{equation}
By \eqref{A_K^psi} and Proposition \ref{psi_varTheta},
we have
\begin{equation}\label{MTH2_A_K^psi}
\# A_K^\psi=|B_{1,\psi^{-1}}\varpi^{a_\psi}|_p^{-\rank_{\Z_p}O_\psi}.
\end{equation}
Therefore, by \eqref{MTH2_BFo} and \eqref{MTH2_A_K^psi},
we have
$$
\#\BFopsi{Y}=|m \varpi^{b_\psi-a_\psi}|_p^{-\rank_{\Z_p}O_\psi}\# A_K^\psi.
$$
Hence, the first statement follows immediately from Lemma \ref{lem_MTH2}.

Next, we assume that $p\nmid \# G$.
For $\ell\in S_\mathrm{fin}$ with $\ell\nmid f_\psi$,
Assumption \ref{NTZpsi} gives $\psi(\ell)\ne1$.
Thus, in this case,
the order of $\psi(\ell)$ is not a power of $p$,
and hence $T_\psi=\emptyset$.
Therefore, Lemma~\ref{lem_MTH2} gives
$$\#\BFopsi{Y}=|m|_p^{-\rank_{\Z_p}O_\psi}\# A_K^\psi.$$
If $p\nmid m$, then $|m|_p^{-1}=1$.
On the other hand, suppose $p\mid m$.
Since $p\nmid\# I_p$,
$p$ is tamely ramified in $K$.
Hence $|m|_p^{-1}=p$.
This completes the proof.
\end{proof}

\section{Examples}\label{_EG}
In this section,
we provide some examples for Theorem \ref{MTH}
and Corollaries \ref{corMTH} and \ref{corcycMTH}.
\subsection{Imaginary quadratic fields}\label{__EG1}
In this subsection,
we consider the case where
$K$ is an imaginary quadratic field.
In this case,
Assumption \ref{NTZ} is satisfied
by Remark \ref{rmk_NTZ_holds}.
First, we consider $K=\Q(\sqrt{-35})$.
Then, we have $m=35$.
We set $S:=\{5, 7, \infty\}$.
Let $G$ and $G_m$ be as in \S\ref{__MTH}. 
We have $w_K=2$.
It is known that $h_K=2$.
Therefore, the right hand side of Corollary \ref{corMTH} is
equal to $2^3\cdot3\cdot5^2\cdot7^2$.

We consider the graph side.
Let $X$ be the bouquet digraph such that
$$
V_X=\{v\}\hspace{10pt}\mathrm{and}\hspace{10pt}
E_X=\{e_0, e_{i,j_i}\ |\ 
1\leq i< 35,\ (i,35)=1,\ j_i=1,\ldots ,i\}.
$$
We define the voltage assignment $\alpha:E_X\to G_m$
by $\alpha(e_0)=\sigma_1^{-1}$ and $\alpha(e_{i,j_i})=\sigma_i^{-1}$.
In this case, the images of $\sigma_i\in G_m$ under
the natural projection $\pi:G_m\to G$
can be classified as follows:
\begin{equation}\label{classification}
\pi(\sigma_i)=\left\{ \,
					\begin{aligned}
						& 1_G\ &&(i=1,3,4,9,11,12,13,16,17,27,29,33)\\
						& \tau\ &&(i=2,6,8,18,19,22,23,24,26,31,32,34)
					\end{aligned}
				\right.
\end{equation}
where $\tau$ denotes the complex conjugation in $G$.
The derived digraph $Y:=X(G,\pi\circ\alpha)$ is given by
the following graph.
\begin{center}
\begin{tikzpicture}
\fill[white] (0,1) circle(0);
\fill[white] (0,-1) circle(0);
\def\t{48};
\coordinate (A) at (0,0);
\coordinate (B) at (2,0);
\coordinate (C) at (-1,0);
\coordinate (D) at (3,0);
\fill[black] (A) circle (0.08);
\fill[black] (B) circle (0.08);
\draw [thin] (A) to[out=\t, in=180-\t] (B);
\draw [thin] (A) to[out=-\t, in=\t-180] (B);
\draw [thin] (A) to[out=\t+90, in=90] (C);
\draw [thin] (A) to[out=-\t-90, in=-90] (C);
\draw [thin] (B) to[out=90-\t, in=90] (D);
\draw [thin] (B) to[out=\t-90, in=-90] (D);
\def\a{0.433};
\draw [-{Stealth[length=1.7mm]}, thin] (1,\a) to (1.1,\a);
\draw [-{Stealth[length=1.7mm]}, thin] (1,-\a) to (0.9,-\a);
\def\b{-0.995};
\def\c{0.03};
\def\d{-0.003};
\def\e{-0.02};
\draw [-{Stealth[length=1.7mm]}, thin] (\b,0+\c) to (\b,-0.1+\c);
\draw [-{Stealth[length=1.7mm]}, thin] (\d+3,0+\e) to (\d+3,0.1+\e);
\draw (-1,0) node[left]{\scriptsize{$176$}};
\draw (3,0) node[right]{\scriptsize{$176$}};
\draw (1,0.4) node[above]{\scriptsize{$245$}};
\draw (1,-0.4) node[below]{\scriptsize{$245$}};
\end{tikzpicture}
\end{center}
Here each edge is labeled with
a small number
indicating its multiplicity.
Hence, the matrix representations $A_Y$ and $B_Y$ of
the adjacency operator $\mathcal{A}_Y$
and the Bowen--Franks operator $\mathcal{BF}_Y$
with respect to the basis $\{(v,1_G), (v,\tau)\}$
are given by
$$
A_Y=
\begin{pmatrix}
176 & 245\\
245 & 176
\end{pmatrix}
\hspace{10pt}\mathrm{and}\hspace{10pt}
B_Y=
\begin{pmatrix}
-175 & -245\\
-245 & -175
\end{pmatrix}
$$
respectively.
Since the matrix $B_Y$ has rank two,
we have
$$
\#\BFtors{Y}=\#\coker(\mathcal{BF}_Y)=\mid\det(B_Y)\mid
=2^3\cdot3\cdot5^2\cdot7^2.
$$
This is the desired result.

Next, we consider a more general case
to see that the above calculation
is essentially parallel to
that for
the classical class number formula of
imaginary quadratic fields.
Let $K$ be an imaginary quadratic field
with conductor $m$.
We set $S:=\{p\mid p\text{ divides }m\}\cup\{\infty\}$.
Let $G$ and $G_m$ be as in $\S3.2$.
We define the Dirichlet character $\psi$ by the composition
$$
\psi:(\Z/m\Z)^\times\xrightarrow{\sim}
G_m\xrightarrow{\pi}
G\xrightarrow{\sim}
\{\pm1\}.
$$
For $j\in\{\pm1\}$, we set
$$
I_j:=\sum_{\substack{1\leq i\leq m,\ (i,m)=1,\\ \psi(i)=j}}i\ .
$$
Then the classical class number formula of
imaginary quadratic fields is given by
$$
h_K=-\frac{w_K}{2m}\sum_{i=1}^{m}i\psi(i)
=-\frac{w_K}{2m}(I_1-I_{-1}).
$$
Since $I_1+I_{-1}=m\phi(m)/2$,
we have
\begin{equation}\label{formula_iqf}
m^2\frac{\phi(m)}{w_K}h_K
=-I_1^2+I_{-1}^2.
\end{equation}
On the other hand,
since the classification \eqref{classification} is determined by
whether $\psi(i)=1$ or $\psi(i)=-1$,
the matrix $B_Y$ is given by
$$
-B_Y=
\begin{pmatrix}
I_1 & I_{-1}\\
I_{-1} & I_1
\end{pmatrix}.
$$
Therefore, we have
$$
\#\BFtors{Y}=\mid\det(B_Y)\mid
=\mid I_1^2-I_{-1}^2\mid.
$$
Combining this with \eqref{formula_iqf},
we obtain Corollary \ref{corMTH}.
\subsection{Cyclotomic fields}\label{__EG2}
In this subsection,
we consider the case where
$K$ is a cyclotomic field.
First, let $K$ be the cyclotomic field $\Q(\zeta_{12})=\Q(\sqrt{3},\zeta_4)$
with conductor $m=12$.
We set $S:=\{2,3,\infty\}$ and
set $G=G_m=\Gal(K/\Q)$.
Since $2\equiv-1\mod 3$ and $3\equiv-1\mod 4$,
in this case, Assumption \ref{NTZ} is satisfied.
We have $w_K=12$.
It is known that $h_K^-=1$.
By \cite[Corollary 4.13]{W},
we have $\mathcal{Q}_K=2$.

To study the ramification of primes in $S$,
we consider the cyclotomic polynomial
$\Phi_{12}=X^4-X^2+1$.
Since 
$$
\Phi_{12}\equiv(X^2+X+1)^2\mod2
\hspace{10pt}\text{and}\hspace{10pt}
\Phi_{12}\equiv(X^2+1)^2\mod3,
$$
both $2$ and $3$ are ramified with index two
in the extension $K/\Q$.
On the other hand,
since the discriminant of $K^+=\Q(\sqrt{3})$ is $12$,
both $2$ and $3$ are totally ramified in $K^+$.
Therefore, we have $t=2$.
Hence, the right hand side of Theorem \ref{MTH} is
equal to $2^6\cdot3^2$.

We consider the graph side.
Let $X$ be the bouquet digraph such that
$$
V_X=\{v\}\hspace{10pt}\mathrm{and}\hspace{10pt}
E_X=\{e_0, e_{i,j_i}\ |\ 
i=1, 5, 7, 11,\ j_i=1,\ldots ,i\}.
$$
We define the voltage assignment $\alpha$
as in \S\ref{__MTH}.
The derived digraph $Y:=X(G,\alpha)$ is given by the following graph.
\begin{center}
\begin{tikzpicture}
\fill[white] (0,1) circle(0);
\fill[white] (0,-1) circle(0);
\def\t{48};
\def\r{0.04};
\coordinate (A) at (0,0);
\coordinate (A1) at (0-\r,0+\r);
\coordinate (A2) at (0+\r,0-\r);
\coordinate (B) at (2,0);
\coordinate (B1) at (2-\r,0-\r);
\coordinate (B2) at (2+\r,0+\r);
\coordinate (C) at (2,2);
\coordinate (C1) at (2+\r,2-\r);
\coordinate (C2) at (2-\r,2+\r);
\coordinate (D) at (0,2);
\coordinate (D1) at (0+\r,2+\r);
\coordinate (D2) at (0-\r,2-\r);
\fill[black] (A) circle (0.08);
\fill[black] (B) circle (0.08);
\fill[black] (C) circle (0.08);
\fill[black] (D) circle (0.08);
\draw [thin] (A1) to (B2);
\draw [thin] (A2) to (B1);
\draw [thin] (B1) to (C2);
\draw [thin] (B2) to (C1);
\draw [thin] (C1) to (D2);
\draw [thin] (C2) to (D1);
\draw [thin] (D1) to (A2);
\draw [thin] (D2) to (A1);
\draw [thin] (A1) to (C2);
\draw [thin] (A2) to (C1);
\draw [thin] (B1) to (D2);
\draw [thin] (B2) to (D1);
\def\t{0.7};
\def\u{0.65};
\draw [-{Stealth[length=1.6mm]}, very thin] (A1) to ($(A1)!\t!(B2)$);
\draw [-{Stealth[length=1.6mm]}, very thin] (B1) to ($(B1)!\t!(A2)$);
\draw [-{Stealth[length=1.6mm]}, very thin] (B1) to ($(B1)!\t!(C2)$);
\draw [-{Stealth[length=1.6mm]}, very thin] (C1) to ($(C1)!\t!(B2)$);
\draw [-{Stealth[length=1.6mm]}, very thin] (C1) to ($(C1)!\t!(D2)$);
\draw [-{Stealth[length=1.6mm]}, very thin] (D1) to ($(D1)!\t!(C2)$);
\draw [-{Stealth[length=1.6mm]}, very thin] (D1) to ($(D1)!\t!(A2)$);
\draw [-{Stealth[length=1.6mm]}, very thin] (A1) to ($(A1)!\t!(D2)$);
\draw [-{Stealth[length=1.6mm]}, very thin] (A1) to ($(A1)!\t!(C2)$);
\draw [-{Stealth[length=1.6mm]}, very thin] (C1) to ($(C1)!\t!(A2)$);
\draw [-{Stealth[length=1.6mm]}, very thin] (B1) to ($(B1)!\t!(D2)$);
\draw [-{Stealth[length=1.6mm]}, very thin] (D1) to ($(D1)!\t!(B2)$);
\draw ($(A1)!\u!(B2)$)node[above]{\scriptsize$11$};
\draw ($(B1)!\u!(A2)$)node[below]{\scriptsize$11$};
\draw ($(B1)!\u!(C2)$)node[left]{\scriptsize$7$};
\draw ($(C1)!\u!(B2)$)node[right]{\scriptsize$7$};
\draw ($(C1)!\u!(D2)$)node[below]{\scriptsize$11$};
\draw ($(D1)!\u!(C2)$)node[above]{\scriptsize$11$};
\draw ($(D1)!\u!(A2)$)node[right]{\scriptsize$7$};
\draw ($(A1)!\u!(D2)$)node[left]{\scriptsize$7$};
\draw (0.85,0.5)node{\scriptsize$5$};
\draw (1.5,0.8)node{\scriptsize$5$};
\draw (1.25,1.55)node{\scriptsize$5$};
\draw (0.5,1.2)node{\scriptsize$5$};
\def\s{0.2};
\draw (0-\s,0-\s) circle(\s*1.414);
\draw (2+\s,0-\s) circle(\s*1.414);
\draw (2+\s,2+\s) circle(\s*1.414);
\draw (0-\s,2+\s) circle(\s*1.414);
\def\v{0.045};
\draw [-{Stealth[length=1.6mm]}, very thin] (0-2*\s,0-2*\s) to (0-2*\s+\v,0-2*\s-\v);
\draw [-{Stealth[length=1.6mm]}, very thin] (2+2*\s,0-2*\s) to (2+2*\s+\v,0-2*\s+\v);
\draw [-{Stealth[length=1.6mm]}, very thin] (2+2*\s,2+2*\s) to (2+2*\s-\v,2+2*\s+\v);
\draw [-{Stealth[length=1.6mm]}, very thin] (0-2*\s,2+2*\s) to (0-2*\s-\v,2+2*\s-\v);
\def\w{0.15};
\draw (0-2*\s-\w,0-2*\s-\w)node{\scriptsize$2$};
\draw (2+2*\s+\w,0-2*\s-\w)node{\scriptsize$2$};
\draw (2+2*\s+\w,2+2*\s+\w)node{\scriptsize$2$};
\draw (0-2*\s-\w,2+2*\s+\w)node{\scriptsize$2$};
\end{tikzpicture}
\end{center}
Here each edge is labeled with
a small number
indicating its multiplicity.
Hence, the matrix representations $A_Y$ and $B_Y$ of
$\mathcal{A}_Y$
and
$\mathcal{BF}_Y$
with respect to the basis $\{(v,\sigma_i)\mid i=1,5,7,11\}$
are given by
$$
A_Y=
\begin{pmatrix}
2 & 7 & 5 & 11\\
7 & 2 & 11 & 5\\
5 & 11 & 2 & 7\\
11 & 5 & 7 & 2
\end{pmatrix}
\hspace{10pt}\mathrm{and}\hspace{10pt}
B_Y=
\begin{pmatrix}
-1 & -7 & -5 & -11\\
-7 & -1 & -11 & -5\\
-5 & -11 & -1 & -7\\
-11 & -5 & -7 & -1
\end{pmatrix}
$$
respectively.
By calculating the invariant factors,
we obtain
$$
\BF{Y}=\coker\mathcal{BF}_Y\cong
\Z\oplus\Z/24\Z\oplus\Z/24\Z.
$$
Hence, we have $\#\BFtors{Y}=2^6\cdot3^2$.
This is the desired result.

Next, 
we consider the case where $K=\Q(\zeta_9)$.
We set $S:=\{3,\infty\}$ and
set $G=G_m=\Gal(K/\Q)$.
In this case,
Assumption \ref{NTZ} is satisfied
by Remark \ref{rmk_NTZ_holds}.
It is known that $h_K^-=1$.
Therefore, the right hand side of Corollary~\ref{corcycMTH} is equal to
$3^6$.

We define a bouquet digraph $X$ and its derived graph $Y$
in the same way as before.
Then the matrix $B_Y$ is given by
$$
-B_Y=
\begin{pmatrix}
1&2&4&5&7&8\\
5&1&2&7&8&4\\
7&5&1&8&4&2\\
2&4&8&1&5&7\\
4&8&7&2&1&5\\
8&7&5&4&2&1
\end{pmatrix}.
$$
By calculating the invariant factors, we obtain
$$
\BF{Y}=\coker\mathcal{BF}_Y\cong
\Z\oplus\Z\oplus\Z/9\Z\oplus\Z/9\Z\oplus\Z/9\Z.
$$
Hence, we have $\#\BFtors{Y}=3^6$.
This is the desired result.
\subsection{Another imaginary abelian field}\label{__EG3}
First, 
we provide a convenient lemma
for verifying Assumption \ref{NTZ}.
While this lemma concerns cyclotomic fields,
it provides a sufficient condition
for Assumption \ref{NTZ}
in the case of imaginary abelian fields.
Indeed, every imaginary abelian field $K$ with conductor $m$
can be embedded into $\Q(\zeta_m)$,
and if Assumption \ref{NTZ} holds for $G_m$,
then it also holds for $G$.
\begin{lem}\label{eg_LEM}
Let $p$ and $q$ be distinct odd primes and 
let $a$ and $b$ be positive integers.
We set $m:=p^aq^b$.
Then Assumption \ref{NTZ} for $G_m=\Gal(\Q(\zeta_m)/\Q)$ is satisfied
if and only if
$-1\in\braket{p}\subset(\Z/q\Z)^\times$ and
$-1\in\braket{q}\subset(\Z/p\Z)^\times$.
\end{lem}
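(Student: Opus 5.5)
By Remark \ref{rmk_NTZ}, Assumption \ref{NTZ} for $G_m$ says: for every odd character $\chi$ of $G_m$ and every $\ell\in\{p,q\}$, we have $\chi^{\mathrm{prim}}(\ell)\ne1$. I plan to split odd characters according to their conductor. Write $\chi=\chi_p\chi_q$ with $\chi_p$ a character of $(\Z/p^a\Z)^\times$ and $\chi_q$ a character of $(\Z/q^b\Z)^\times$. The conductor of $\chi$ is divisible by $p$ exactly when $\chi_p\ne1$, and in that case $\chi^{\mathrm{prim}}(p)=0\ne1$. So the only possible failures at $p$ come from odd characters with $\chi_p=1$, i.e. odd characters $\chi=\chi_q$ of $q$-power conductor, and there the condition is $\chi_q(p)\ne1$. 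Symmetrically, at $q$ the condition is $\chi_p(q)\ne1$ for all odd characters $\chi_p$ of $p$-power conductor. Thus Assumption \ref{NTZ} is equivalent to the conjunction of two symmetric statements, and it suffices to show:

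\begin{quote}
$\chi(p)\ne1$ for all odd characters $\chi$ of $(\Z/q^b\Z)^\times$ if and only if $-1\in\braket{p}\subset(\Z/q\Z)^\times$.
\end{quote}

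Here $(\Z/q^b\Z)^\times$ is cyclic. An odd character $\chi$ with $\chi(p)=1$ exists if and only if some character trivial on $\braket{p}$ is nontrivial on $-1$. By duality of finite abelian groups, this happens if and only if $-1\notin\braket{p}$ inside $(\Z/q^b\Z)^\times$. So the condition is equivalent to $-1\in\braket{p}\subset(\Z/q^b\Z)^\times$.

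The remaining step is to descend from $q^b$ to $q$. Reduction gives a surjection $(\Z/q^b\Z)^\times\to(\Z/q\Z)^\times$, which is injective on the prime-to-$q$ part. Since $-1$ has order $2$ with $q$ odd, it lies in the prime-to-$q$ part $\mu_{q-1}$.

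Now let $p=\zeta u$, where $\zeta\in\mu_{q-1}$ and $u$ lies in the $q$-part $U_1$. The prime-to-$q$ component of $\braket{p}$ is $\braket{\zeta}$, and $\braket{\zeta}$ is contained in $\braket{p}$ because $\braket{p}$ is a cyclic group of order $|\zeta|\cdot|u|$. Hence $-1\in\braket{p}$ if and only if $-1\in\braket{\zeta}$. Since $\mu_{q-1}$ maps isomorphically onto $(\Z/q\Z)^\times$, sending $\zeta$ to $p\bmod q$, this holds if and only if $-1\in\braket{p \bmod q}$.

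I expect this descent, with its care about the Sylow decomposition, to be the only mildly delicate point. Everything else follows from the equivalence in Remark \ref{rmk_NTZ} and character duality. Combining the two symmetric statements gives the lemma.
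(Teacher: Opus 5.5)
Your proof is correct, but it is organized differently from the paper's. The paper splits into cases by the conductor $f$ of $\chi$ ($pq\mid f$, $f=p^c$, $f=q^d$). For $f=p^c$ it descends from $p^c$ to $p$ by comparing the orders $k_1,k_c$ of $q$ modulo $p$ and $p^c$. Since $k_1\mid k_c\mid p^{c-1}k_1$, these orders have the same parity, and in the cyclic group $(\Z/p^c\Z)^\times$ the condition $-1\in\langle q\rangle$ just says $k_c$ is even. Sufficiency is then the one-line computation $-1=\chi(-1)=\chi(q)^{k_c/2}$. Necessity is done directly at level $p$, by inflating an odd character of $(\Z/p\Z)^\times/\langle q\rangle$, so no descent is needed in that direction.

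You instead write $\chi=\chi_p\chi_q$ and treat all characters of $(\Z/q^b\Z)^\times$ at once at full level, which removes the conductor-by-conductor case split. Character duality (characters of $A/\langle p\rangle$ separate points, and $\chi(-1)\ne1$ forces $\chi(-1)=-1$) gives both directions of the level-$q^b$ equivalence simultaneously. Your descent then uses the splitting $(\Z/q^b\Z)^\times=\mu_{q-1}\times U_1$ rather than a parity count of orders. The advantage is uniformity, and your descent needs only that the reduction kernel $U_1$ is a $q$-group, not cyclicity. The paper's argument is more elementary and explicitly exhibits the offending odd character.

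One small point to tighten: $\zeta\in\langle p\rangle$ does not follow merely from $\langle p\rangle$ being cyclic of order $|\zeta|\cdot|u|$. Argue instead that $p^{|u|}=\zeta^{|u|}$ and $\gcd(|u|,|\zeta|)=1$, so $\zeta^{|u|}$ already generates $\langle\zeta\rangle$.
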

\begin{proof}
Fix an odd character $\chi\in\CG{G_m}$ with conductor $f$,
and write $\chi(\ell)$ for the value
at a rational prime $\ell$
of the primitive character associated with $\chi$.
If $p\mid f$ and $q\mid f$, then
$\chi(p)=\chi(q)=0\ne1$,
so that Assumption \ref{NTZ} holds.

We consider the case where $f=p^c\ (1\leq c\leq a)$.
First, we show that $-1\in\braket{q}\subset(\Z/p\Z)^\times$
if and only if $-1\in\braket{q}\subset(\Z/p^c\Z)^\times$.
For each $i\in\{1, c\}$,
let $k_i$ be the order of $q$ in $(\Z/p^i\Z)^\times$.
It suffices to show that
$k_1$ is even if and only if $k_c$ is even.
Since $q^{k_c}\equiv1\mod p^c$ implies $q^{k_c}\equiv1\mod p$,
it follows that $k_1\mid k_c$.
Hence $k_c$ is even if $k_1$ is even.

On the other hand,
from $q^{k_1}\equiv1\mod p$,
we have $q^{pk_1}\equiv1\mod p^2$.
By induction, we obtain
$q^{p^{c-1}k_1}\equiv1\mod p^c$.
It follows that $k_c\mid p^{c-1}k_1$.
Hence $k_1$ is even if $k_c$ is even
since $p$ is odd.

Next, we assume that $-1\in\braket{q}\subset(\Z/p^c\Z)^\times$.
If $\chi(q)=1$, then we have
$$
-1=\chi(-1)=\chi(q)^{\frac{k_c}{2}}=1,
$$
which is a contradiction.
Therefore, we have $\chi(q)\ne1$.
It is clear that $\chi(p)\ne1$.
The case where $f=q^d\ (1\leq d\leq b)$ is similar.
Hence, in this case, Assumption \ref{NTZ} holds.

Conversely, we may assume that
$-1\not\in\braket{q}\subset(\Z/p\Z)^\times$.
Hence, there exists an odd character $\overline{\psi}$ on $(\Z/p\Z)^\times/\braket{q}$.
Let $\psi$ be the character on $G_m$
obtained by composing $\overline{\psi}$ with
the natural surjection $G_m\to(\Z/p\Z)^\times/\braket{q}$.
Then $\psi^\mathrm{prim}(q)=1$.
This completes the proof.
\end{proof}

We consider the case where $K:=\Q(\sqrt{-15}, \zeta_9+\zeta_9^{-1})$.
Then we have $m=3^2\cdot5$.
We set $S=\{3, 5, \infty\}$.
Let $G$ and $G_m$ be as in \S\ref{__MTH}.
In this case, 
since
$$
3^2\equiv-1\mod 5
\hspace{10pt}
\text{and}
\hspace{10pt}
5^1\equiv-1\mod3,
$$
Assumption \ref{NTZ} is satisfied by Lemma \ref{eg_LEM}.
The subfield structure of $K$
is illustrated in the diagram below.
\begin{center}
\begin{tikzpicture}[scale=0.86]
\draw(0,0) node{$\Q(\zeta_{45})$};
\draw(0,-0.35)--(0,-1.15);
\draw(0,-1.5) node{$K=\Q(\sqrt{-15}, \zeta_9+\zeta_9^{-1})$};
\draw(-1,-1.85)--(-2,-2.65);
\draw(-2,-3) node{$\Q(\sqrt{-15})$};
\draw(1,-1.85)--(2,-2.65);
\draw(2.5,-3) node{$\Q(\zeta_9+\zeta_9^{-1})=K^+$};
\draw(-2,-3.35)--(-0.5,-4.45);
\draw(2,-3.35)--(0.5,-4.45);
\draw(0,-4.6) node{$\Q$};
\end{tikzpicture}
\end{center}
Since $K=K^+\Q(\sqrt{-15})$ and
both $3$ and $5$ are totally ramified in $\Q(\sqrt{-15})$,
we have $t=0$.
Moreover, we have
$[\Q(\zeta_{45}):K]=4$, $w_K=2$, and $\mathcal{Q}_K=1$.
It is known that $h_{K}^-=2$.
Therefore, the right hand side of Theorem \ref{MTH} is
equal to $2^3\cdot3^8\cdot5^4$.

We consider the graph side.
We define the bouquet digraph $X$ and
the voltage assignment $\alpha$ as in \S\ref{__MTH}.
In this case,
we have $G\cong\Z/3\Z\times\Z/2\Z$.
Let $\rho$ and $\tau$ be generators
of $\Z/3\Z$ and $\Z/2\Z$
respectively.
Then, the images of $\sigma_i\in G_m$ under
the natural projection $\pi:G_m\to G$
can be classified as follows:
$$
\pi(\sigma_i)=\left\{ \,
					\begin{aligned}
						& (1,1)\ &&(i=1,8,17,19)\\
						& (1,\tau)\ &&(i=26,28,37,44)\\
						& (\rho,1)\ &&(i=2,16,34,38)\\
						& (\rho,\tau)\ &&(i=7,11,29,43)\\
						& (\rho^2,1)\ &&(i=4,23,31,32)\\
						& (\rho^2,\tau)\ &&(i=13,14,22,41).
					\end{aligned}
				\right.
$$
The derived digraph $Y:=X(G,\pi\circ\alpha)$ is given by the following graph.
\begin{center}
\begin{tikzpicture}
\fill[white] (0,1) circle(0);
\fill[white] (0,-1) circle(0);
\def\r{2};
\def\t{60};
\coordinate (O) at (0,0);
\coordinate (A0) at ({\r*cos(0*60)},{\r*sin(0*60)});
\coordinate (A1) at ({\r*cos(1*60)},{\r*sin(1*60)});
\coordinate (A2) at ({\r*cos(2*60)},{\r*sin(2*60)});
\coordinate (A3) at ({\r*cos(3*60)},{\r*sin(3*60)});
\coordinate (A4) at ({\r*cos(4*60)},{\r*sin(4*60)});
\coordinate (A5) at ({\r*cos(5*60)},{\r*sin(5*60)});
\fill[black] (A0) circle (0.08);
\fill[black] (A1) circle (0.08);
\fill[black] (A2) circle (0.08);
\fill[black] (A3) circle (0.08);
\fill[black] (A4) circle (0.08);
\fill[black] (A5) circle (0.08);
\draw[thin](A0)--(A1);
\draw[thin](A0)--(A2);
\draw[thin](A0)--(A3);
\draw[thin](A0)--(A4);
\draw[thin](A0)--(A5);
\draw[thin](A1)--(A2);
\draw[thin](A1)--(A3);
\draw[thin](A1)--(A4);
\draw[thin](A1)--(A5);
\draw[thin](A2)--(A3);
\draw[thin](A2)--(A4);
\draw[thin](A2)--(A5);
\draw[thin](A3)--(A4);
\draw[thin](A3)--(A5);
\draw[thin](A4)--(A5);
\def\s{0.95};
\def\ss{1.00};
\def\u{1.2};
\def\v{1.393};
\def\vv{1};
\def\vvv{2}
\def\w{1.1};
\draw ({\s*\r*cos(1.5*60)},{\s*\r*sin(1.5*60)})node{\scriptsize{$135$}};
\draw ({\ss*\r*cos(3.5*60)},{\s*\r*sin(3.5*60)})node{\scriptsize{$135$}};
\draw ({\ss*\r*cos(5.5*60)},{\s*\r*sin(5.5*60)})node{\scriptsize{$135$}};
\draw ({\u*\r*cos(0*60)},{\u*\r*sin(0*60)}) circle(0.4);
\draw ({\u*\r*cos(1*60)},{\u*\r*sin(1*60)}) circle(0.4);
\draw ({\u*\r*cos(2*60)},{\u*\r*sin(2*60)}) circle(0.4);
\draw ({\u*\r*cos(3*60)},{\u*\r*sin(3*60)}) circle(0.4);
\draw ({\u*\r*cos(4*60)},{\u*\r*sin(4*60)}) circle(0.4);
\draw ({\u*\r*cos(5*60)},{\u*\r*sin(5*60)}) circle(0.4);
\draw [-{Stealth[length=1.6mm]}, very thin] ({\v*\r*cos(0*60-\vv+\vvv)},{\v*\r*sin(0*60-\vv+\vvv)}) to ({\v*\r*cos(0*60+\vv+\vvv)},{\v*\r*sin(0*60+\vv+\vvv)});
\draw [-{Stealth[length=1.6mm]}, very thin] ({\v*\r*cos(1*60-\vv+\vvv)},{\v*\r*sin(1*60-\vv+\vvv)}) to ({\v*\r*cos(1*60+\vv+\vvv)},{\v*\r*sin(1*60+\vv+\vvv)});
\draw [-{Stealth[length=1.6mm]}, very thin] ({\v*\r*cos(2*60-\vv+\vvv)},{\v*\r*sin(2*60-\vv+\vvv)}) to ({\v*\r*cos(2*60+\vv+\vvv)},{\v*\r*sin(2*60+\vv+\vvv)});
\draw [-{Stealth[length=1.6mm]}, very thin] ({\v*\r*cos(3*60-\vv+\vvv)},{\v*\r*sin(3*60-\vv+\vvv)}) to ({\v*\r*cos(3*60+\vv+\vvv)},{\v*\r*sin(3*60+\vv+\vvv)});
\draw [-{Stealth[length=1.6mm]}, very thin] ({\v*\r*cos(4*60-\vv+\vvv)},{\v*\r*sin(4*60-\vv+\vvv)}) to ({\v*\r*cos(4*60+\vv+\vvv)},{\v*\r*sin(4*60+\vv+\vvv)});
\draw [-{Stealth[length=1.6mm]}, very thin] ({\v*\r*cos(5*60-\vv+\vvv)},{\v*\r*sin(5*60-\vv+\vvv)}) to ({\v*\r*cos(5*60+\vv+\vvv)},{\v*\r*sin(5*60+\vv+\vvv)});
\draw({\w*\v*\r*cos(0*60)},{\w*\v*\r*sin(0*60)})node{\scriptsize{$46$}};
\draw({\w*\v*\r*cos(1*60)},{\w*\v*\r*sin(1*60)})node{\scriptsize{$46$}};
\draw({\w*\v*\r*cos(2*60)},{\w*\v*\r*sin(2*60)})node{\scriptsize{$46$}};
\draw({\w*\v*\r*cos(3*60)},{\w*\v*\r*sin(3*60)})node{\scriptsize{$46$}};
\draw({\w*\v*\r*cos(4*60)},{\w*\v*\r*sin(4*60)})node{\scriptsize{$46$}};
\draw({\w*\v*\r*cos(5*60)},{\w*\v*\r*sin(5*60)})node{\scriptsize{$46$}};
\end{tikzpicture}
\end{center}
Here, for simplicity, each undirected edge is a multi-edge,
meaning that
a pair of directed edges in opposite directions is
represented by a single edge,
and the multiplicity of unlabeled edges is $90$.
Hence, the matrix representation $B_Y$ of
the Bowen--Franks operator 
$\mathcal{BF}_Y$
with respect to the basis $\{(v,(\rho^i,\tau^j))\mid i=0,1,2,\ j=0,1\}$
is given by
$$
B_Y=
\begin{pmatrix}
-45 & -135 &&&&\\
-135 & -45 &&&&\\
&&-45 & -135 &&\\
&&-135 & -45 &&\\
&&&&-45 & -135\\
&&&&-135 & -45
\end{pmatrix}
$$
where all blank entries are equal to $-90$.
By calculating the invariant factors,
we obtain
$$
\BF{Y}=\coker\mathcal{BF}_Y\cong
\Z^2\oplus(\Z/45\Z)^3\oplus\Z/360\Z.
$$
Hence, we have $\#\BFtors{Y}=2^3\cdot3^8\cdot5^4$.
This is the desired result.

\end{document}